\newtheorem{thm}{Theorem}[section]
\newtheorem{clly}{Corollary}[section]
\newtheorem{lema}{Lemma}[section]
\newtheorem{prop}{Proposition}[section]
\newtheorem{conjecture}{Conjecture}[section]
\theoremstyle{definition}
\newtheorem{defi}{Definition}[section]
\newtheorem{rk}{Remark}[section]
\newcommand{\N}{\mbox{$\mathbb{N}$}}
\newcommand{\T}{\mbox{$\mathbb{T}$}}
\newcommand{\Z}{\mbox{$Z\!\!\!Z$}}
\newcommand{\R}{\mbox{$I\!\!R$}}
\numberwithin{equation}{section}
\begin{document}

\title[ Robust transitivity of singular endomorphisms ]{ Robustly transitive maps with critical points and large dimensional central spaces.}

\author[Juan Carlos Morelli]{}

\subjclass{Primary: 37C20; Secondary: 57R45, 57N16.}
 \keywords{Transitivity, stability, robustness, high dimension, central space.}

\email{jmorelli@fing.edu.uy }

\maketitle

\centerline{\scshape  Juan Carlos Morelli$^*$}
\medskip
{\footnotesize
 \centerline{Universidad de La Rep\'ublica. Facultad de Ingenieria. IMERL.}
   \centerline{ Julio Herrera y Reissig 565. C.P. 11300.}
   \centerline{ Montevideo, Uruguay.}}

\bigskip

 \centerline{(Communicated by )}

\begin{abstract}

 Given any triplet of positive integers $n \geq 2$, $m$ and $k$ such that $n=m+k$, we exhibit a $C^1$ robustly transitive endomorphism of $\mathbb{T}^n$ with persistent critical points in the isotopy class of $F \times Id$, where $F$ is an expanding map of $\mathbb{T}^m$ and $Id$ is the identity of $\mathbb{T}^k$. Furthermore, if $k$ is small, the map is not only in the isotopy class but in fact a perturbation of $F \times Id$.

\end{abstract}

\section{Introduction}

Let $X$ be a real (riemannian) manifold and an induced discrete dynamical system over $X$ by a continuous map $f: X \to X$. Many of the features of interest displayed by the dynamical system depend on the properties displayed by $f$, and whichever property $f$ possesses is said to be \textit{robust} if all maps sufficiently close to $f$ also posses that same property. Our article addresses robustness of transitivity, meaning by \textit{transitive} the existence of a forward dense orbit for some point $x$ in $X$.\\
Robust transitivity has been an active topic for about the last fifty years. The first results showed that for \textsl{diffeomorphisms} it is needed to have at least weak hyperbolicity for $C^1$ robust transitivity (see \cite{m2}, \cite{bdp}), while \textit{Anosov on $\T^2$} is an equivalent condition in the surfaces' setting. Concerning \textsl{regular endomorphisms} (non-injective and empty critical set), weak forms of hyperbolicity are shown to be necessary for robust transitivity, while sufficient conditions are known for the $n$-torus as phase space (see \cite{and}, \cite{lp}, \cite{p}).  \\
Robust transitivity of \textsl{singular endomorphisms} (non-empty critical set) has been the least studied setting, only in the last decade the first such examples were exhibited (see \cite{br}, \cite{ilp}) on the $2$-torus. The setting became active recently. In 2019 was shown that weak hyperbolicity is needed for $C^1$ robust transitivity of singular maps on any surface (see \cite{lr1}, \cite{lr2}). The higher dimensional setting has been approached even less: on the one hand only two examples of such maps have been recently exhibited (see \cite{mo}, \cite{mo2}) while on the other, the first necessary condition for their existence has been stated very recently (see \cite{lppr}), being the {\it State of the Art} in the subject.\\

The aim of our article is to improve the construction given by \cite{mo2}. There appears the first example of a robustly transitive map in the $C^1$ topology displaying persistent critical points in high dimension. The construction is carried on such that the central space\footnote{by {\it central space} we mean a section of the tangent bundle which is invariant by the differential map and its vectors are not uniformly expanded nor uniformly contracted. In particular, eigenspaces associated to eigenvalues of modulus one are center spaces.} is one dimensional, which is a classical approach in the literature. The problem of dealing with central spaces of larger dimensions is well known and it corresponds to the loss of control within these spaces. Moreover, a good many open problems have found already partial answers in the {\it one-dimensional center space} set up, while the {\it two-or-larger-dimensional center space} set up remains open (see XXX). Here, we adjust the construction carried on in \cite{mo2}, showing that the central space does not need be one dimensional but can in fact be as large dimensional as desired.\\

The main result of this work is stated in the claim below. In the next section precise definitions appears.

\begin{thm}\label{main}
  Let $n$ and $k$ be natural numbers such that $n \geq 2$ and $1 \leq k \leq n-1$ and let $\mathbb{T}^n$ be the $n$-torus. Then, there exists a smooth map $F: \mathbb{T}^n \to \mathbb{T}^n$ satisfying:
  \begin{itemize}
    \item $F$ is $C^1$ robustly transitive.
    \item $F$ is robustly singular.
    \item The central space of $F$ is $k$-dimensional.
  \end{itemize}
\end{thm}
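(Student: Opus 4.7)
The plan is to construct $F$ as a map in the isotopy class of $E \times Id$, where $E : \mathbb{T}^m \to \mathbb{T}^m$ is a linear expanding endomorphism with large expansion rate and $Id$ is the identity on $\mathbb{T}^k$, with $m = n - k$, by carrying out three layered perturbations of $E \times Id$ that together yield the three conclusions of the theorem.

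First I would install persistent critical points following the scheme of \cite{br}, \cite{ilp}, \cite{mo}, \cite{mo2}: a local deformation of the base factor on a tubular set pushes one expanding eigenvalue of $E$ through zero along a codimension-one locus, producing a critical set $\mathcal{C}_F$ whose existence is encoded by an open condition on the determinant of $DF$ and is therefore persistent under $C^1$ perturbations.

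Second, to create transitivity I would introduce a skew coupling of the form $F(x,y) = (\tilde{E}(x,y),\, y + \varphi(x))$ for a smooth $\varphi : \mathbb{T}^m \to \mathbb{T}^k$ (small in $C^0$ when $k$ is small), chosen so that the images $\{\varphi(E^j x_0)\}_{j \geq 0}$ generate a dense subgroup of $\mathbb{T}^k$ uniformly in $x_0$. The horizontal foliation by $\mathbb{T}^k$-fibers remains invariant, so there is a canonical $k$-dimensional $DF$-invariant bundle on which vectors are neither uniformly expanded nor uniformly contracted; this is the required $k$-dimensional central space. Robust transitivity then follows by combining the expansion of the base (which forces the $\mathbb{T}^m$-projection of $F^N(U)$ to cover $\mathbb{T}^m$ for every open $U$) with the uniform minimality of the cocycle on the fibers (which forces the $\mathbb{T}^k$-projection of $F^N(U)$ to be dense), and all three conditions are open in the $C^1$ topology by standard arguments.

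The main obstacle I expect is this last step when $k \geq 2$: ensuring transitivity with a higher-dimensional central fiber. In \cite{mo2} the one-dimensional center reduces denseness to the equidistribution of a single real-valued orbit on $\mathbb{T}^1$; for $k \geq 2$ one must force simultaneous equidistribution in all central coordinates and preserve that behaviour throughout a $C^1$-open set of perturbations. I would address this by choosing $\varphi$ so that the generated subgroup is all of $\mathbb{T}^k$ in a quantitatively uniform way (e.g.\ by letting the components of $\varphi$ be rationally independent bumps supported on disjoint regions of $\mathbb{T}^m$ visited by every orbit of $E$), and then appealing to the unstable-manifold argument of \cite{mo2} performed fibrewise in $\mathbb{T}^k$ instead of along a single central leaf.
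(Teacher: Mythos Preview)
Your outline has the right overall architecture (start from $E\times Id$, build a blending mechanism in the $\T^k$-fibers, add a local fold for the critical set), but the heart of the argument --- $C^1$-robustness of transitivity --- has a genuine gap.

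The mechanism you propose for mixing in the central factor is a translation cocycle: $y\mapsto y+\varphi(x)$, with $\varphi$ chosen so that its values generate a dense subgroup of $\T^k$. This gives transitivity for $F$ itself, but it is \emph{not} a $C^1$-open condition. A $C^1$-small perturbation $G$ of $F$ is in general no longer a skew product over $\T^m$, so there is no ``cocycle on the fibers'' to speak of; and even if one restricts to skew perturbations, rational independence of translation vectors is destroyed by arbitrarily small changes. The sentence ``all three conditions are open in the $C^1$ topology by standard arguments'' is exactly where the proof breaks, and your proposed fix (quantitatively uniform rational independence of the components of $\varphi$) does not address robustness at all --- it only strengthens minimality for the unperturbed map. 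This is precisely the difficulty that makes the $k\geq 2$ case nontrivial.

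The paper replaces the translation cocycle by two robust ingredients. First, on finitely many slices $\{p_i\}\times\T^k$ (fixed by the base map) the fiber dynamics is given by a finite family of diffeomorphisms of $\T^k$ generating a $C^1$\emph{-robustly} minimal IFS, supplied by the Homburg--Nassiri theorem; a second family of near-identity fiber maps is used so that preimages of any open set in $\T^k$ acquire large inradius. Second, each slice $\{p_i\}\times\T^k$ is normally hyperbolic (strong expansion transverse to it, neutral along it), so by the persistence theorem every $C^1$-close $G$ still has nearby invariant $k$-tori on which the restricted dynamics is $C^1$-close to the original IFS generators --- hence still minimal. Combined with an unstable cone field, this yields robust density of the stable and unstable manifolds of a saddle, and the Inclination Lemma finishes. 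A smaller point: the paper introduces the critical set by a fold in one \emph{central} coordinate (making a diagonal entry $1-\varphi'\psi$ vanish) rather than by pushing an expanding eigenvalue of $E$ through zero; this keeps the unstable cone field intact everywhere, which your base-factor deformation would have to be checked against, and would fail outright when $m=1$.
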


    \subsection{Sketch of the Construction.} Start from an endomorphism of $\mathbb{T}^n$ induced by a diagonal matrix with integer coefficients, where the first $(n-k)$ entries are large positive integers (yielding one dimensional strong unstable directions) and the last $k$ entries equal to one (central directions). Perturb the induced map to add a {\it blending region}. Roughly speaking, it consists of several {\it slices} of of $\mathbb{T}^n$ where two mechanisms take place: on the one hand, the restriction of the perturbation to some of these slices configures a set of maps that {\it shrink} open sets; and its restriction to the other slices configure a robustly minimal IFS. Their combination, along with a field of unstable cones, mixes everything getting the transitivity from local dynamics over this region. Then the critical points are introduced artificially with a second perturbation far from the blending region so that the transitivity property remains. All the construction is done in a robust way.\\

    The author emphasizes that the contents to follow are an improvement to the construction appearing in \cite{mo2}, where the case $n \geq 2$ and $k=1$ is solved. The proofs to some of the claims in our Lemmas and Theorems are, consequently, also inspired by \cite{mo2}. If readers wish to get a hollower approach to our construction they are gently invited to get in touch with the article.

\section{Preliminaries}

We recall some definitions here, for more insight on geometrical or dynamical background the readers can refer themselves to \cite{gg} or \cite{kh}.

Let $M$ be a differentiable manifold of dimension $m$ and $f:M \rightarrow M$ a differentiable endomorphism and $D_xf:T_xM \to T_xM$ its differential map (or its Jacobian matrix, indistinctively). The \textbf{critical set} of $f$ is $S_f:=\{ x \in X : det(D_xf)=0\}$. The map $f$ is said to be \textbf{singular} if its critical set is nonempty, and it is said to be \textbf{robustly singular} if there exists a neighborhood $\mathcal{U}_{f}$ of $f$ in the $C^1$ topology such that the critical set $S_g$ is nonempty for all $ g \in \mathcal{U}_{f}$. The \textbf{orbit} of $x \in M$ is $\mathcal{O}(x)=\{ f^n(x) , n \in \N \}$. The map $f$ is said to be \textbf{transitive} if there exists a point $x \in M$ such that $ \overline{{\mathcal{O}(x)}}=M$, and it is said that $f$ is $C^k$-\textbf{robustly transitive} if there exists a neighborhood $\mathcal{U}_f$ of $f$ in the $C^k$ topology such that $g$ is transitive for all $ g$ belonging to $\mathcal{U}_f$.\\
The proposition ahead is well known, useful and easy to prove.

\begin{prop}\label{equi}
  If $f$ is continuous then are equivalent:
  \begin{enumerate}
    \item  $f$ is transitive.
    \item  For all $U, V$ open sets in $M$, exists $n \in \N$ such that $ f^n(U) \cap V \neq \emptyset$.
    \item  There exists a residual set $R$ (countable intersection of open and dense sets) such that for all points $  x \in R: \overline{\mathcal{O}(x)}=M$. \hfill $\Box$
  \end{enumerate}
\end{prop}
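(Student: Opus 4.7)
The plan is to establish the cycle $(1) \Rightarrow (2) \Rightarrow (3) \Rightarrow (1)$, so that each direction is independently transparent.

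For $(1) \Rightarrow (2)$, I would start from a point $x \in M$ with $\overline{\mathcal{O}(x)} = M$ and any two nonempty open sets $U, V \subset M$. Density gives some $n_1 \in \N$ with $f^{n_1}(x) \in U$ and some $n_2 > n_1$ with $f^{n_2}(x) \in V$; then $f^{n_2 - n_1}(U)$ already contains $f^{n_2}(x) \in V$, producing the required $n$.

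For $(2) \Rightarrow (3)$, the mechanism is the Baire category theorem. Second countability of the manifold $M$ supplies a countable open base $\{V_k\}_{k \in \N}$, and I would set $W_k := \bigcup_{n \in \N} f^{-n}(V_k)$. Continuity of $f$ makes each $W_k$ open; condition $(2)$ applied to an arbitrary nonempty open $U$ against $V_k$ yields some $n$ with $U \cap f^{-n}(V_k) \neq \emptyset$, so $W_k$ is dense. Since $M$ is locally compact Hausdorff and second countable, it is a Baire space, so $R := \bigcap_k W_k$ is residual, and any $x \in R$ has orbit meeting every basic open set, i.e.\ $\overline{\mathcal{O}(x)} = M$. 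The implication $(3) \Rightarrow (1)$ is then immediate, since a residual subset of a Baire space is nonempty.

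There is essentially no obstacle in this argument; the only point requiring a moment's thought is confirming that the underlying topological hypotheses (second countability and the Baire property) are at our disposal, which they automatically are for any differentiable manifold $M$. The proof is a textbook application of these ingredients, needing nothing about $f$ beyond continuity.
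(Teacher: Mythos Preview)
Your argument is correct and is exactly the standard textbook proof of this equivalence. The paper itself does not supply a proof at all: it introduces the proposition as ``well known, useful and easy to prove'' and closes it immediately with a $\Box$, so there is nothing to compare against beyond noting that your write-up fills in what the authors deliberately omitted.

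One small point worth making explicit in your $(1)\Rightarrow(2)$ step: to guarantee an index $n_2>n_1$ with $f^{n_2}(x)\in V$ you are implicitly using that the tail $\{f^{n}(x):n\ge n_1\}$ is still dense. This holds because $M$, being a positive-dimensional manifold, has no isolated points, so deleting the finitely many points $x,f(x),\dots,f^{n_1-1}(x)$ from a dense set leaves a dense set. It is not a gap in the present setting, but it is the one place where something about $M$ beyond ``Baire and second countable'' is quietly used.
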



\subsection{Normal Hyperbolicity.}

We continue defining normally hyperbolic submanifolds in the sense of \cite{bb}. These kind of submanifolds for a given map are persistently invariant under perturbation, it allows defining dynamical systems within them. This will be the main usage we will make of them ahead in the paper. Their formal definition is as follows.

Let $f:M \to M$ be a $C^1$ diffeomorphism, $N \subset M$ a $C^1$ closed submanifold such that $f(N)=N$ (we say that $N$ is \textit{invariant}).
\begin{defi}
  We say that $f$ is \textit{Normally Hyperbolic at $N$} if there exists a splitting of the the tangent bundle of $M$ over $N $ into three $Df$-invariant subbundles such that $TM_{|N} = E^s \oplus E^u \oplus TN$ and a constant $0 < \lambda < 1$ such that for all $x \in \mathbb{N}$ the following hold:
  \begin{itemize}
    \item $||D_xf_{|E^s_x}|| < \lambda$, $||(D_xf)^{-1}_{|E^u_x}|| < \lambda$,
    \item $||D_xf_{|E^s_x}||.||(D_{f(x)}f)^{-1}_{|T_{f(x)}N}|| < \lambda$,
    \item $||(D_xf)^{-1}_{|E^u_x}||.||(D_{f^{-1}(x)}f)_{|T_{f^{-1}(x)}N}|| < \lambda$.
  \end{itemize}
 \end{defi}

 The first condition implies that the behavior of the differential map $Df$ is hyperbolic over $(T_x M)_{|N} \setminus T_x N$ while the other two describe the domination property relative to stable $E^s$ and unstable $E^u$ subspaces. Our interest in these submaniolds comes from \cite[Theorem 2.1]{bb} which states:
  \begin{thm}\label{NHSP}
     Given $M,N$ and $f$ as in the definition above, there exists $\mathcal{U}_f$ a $C^1$ neighborhood of $f$ such that all $g \in  \mathcal{U}_f$ admit a $C^1$ invariant submanifold $N_g$ which is unique such that $g$ is normally hyperbolic at $N_g$. Moreover, $N$ and $N_g$ are diffeomorphic and there exists an embedding from $N$ to $N_g$ which is $C^1$ close to the canonical inclusion $i:N \to M$.
  \end{thm}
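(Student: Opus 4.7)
The plan is to implement the graph transform argument of Hirsch--Pugh--Shub/Fenichel. First I would fix a Riemannian metric on $M$ and work in a tubular neighborhood $\mathcal{T}$ of $N$, identifying $\mathcal{T}$ via the exponential map with a neighborhood of the zero section of the normal bundle $E = E^s \oplus E^u \to N$. In these coordinates every $C^1$ submanifold $C^1$-close to $N$ is the graph of a Lipschitz section $\sigma: N \to E$ of small sup norm and small Lipschitz constant; let $\Sigma$ denote the Banach space of such sections with the norm $\|\sigma\|_0 + \mathrm{Lip}(\sigma)$.

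For each $g$ sufficiently $C^1$-close to $f$ I would define a graph transform $\Gamma_g : \Sigma \to \Sigma$ whose graph is, after reparametrization of the base by the $g$-analogue of $(f|_N)^{-1}$, the image under $g$ of the graph of $\sigma$. The reparametrization is well defined because $f|_N$ is a diffeomorphism and this persists under $C^1$ perturbation; moreover $g$-invariance of the graph is equivalent to $\sigma$ being a fixed point of $\Gamma_g$. Since $f(N)=N$, the zero section is the fixed point of $\Gamma_f$, so uniqueness of a nearby fixed point will yield both existence and the claimed $C^1$-closeness of $N_g$ to $N$.

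The heart of the argument is showing that $\Gamma_g$ is a contraction on $\Sigma$. The first bullet of the definition controls the fiber action pointwise: the $E^s$ component is contracted by $\lambda$ and the $E^u$ component, once the base has been straightened out by the inverse graph transform in the expanding direction, is also multiplied by $\lambda$. The second and third bullets (the domination inequalities) are precisely what is needed to guarantee that the tangential derivative $Df|_{TN}$ cannot overwhelm these fiber contractions when one passes from pointwise estimates to estimates on the Lipschitz constant of the image section. All of these estimates are open in the $C^1$ topology, so they persist on a $C^1$-neighborhood $\mathcal{U}_f$ of $f$. Banach's fixed point theorem then delivers a unique $\sigma_g \in \Sigma$, whose graph is $N_g$; projection along the normal bundle provides the embedding $N \hookrightarrow N_g$ which is $C^1$-close to the inclusion because $\sigma_g$ depends continuously on $g$ and vanishes at $g=f$.

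Finally, to promote $N_g$ from Lipschitz to $C^1$ I would run an analogous graph transform one level up, on the Grassmannian bundle of $n$-planes along $\mathrm{graph}(\sigma_g)$ using the induced bundle map $Dg$: the domination conditions again make this graph transform a contraction, and its fixed section is a continuous $Dg$-invariant distribution that must coincide with the tangent bundle of $N_g$. The main obstacle I anticipate is the contraction step: the domination bullets must be used in exactly the right norm (first $C^0$-Lipschitz on sections, then $C^0$ on Grassmannian sections), with the bound on the Lipschitz constant of $\sigma_g$ chosen small enough that the reparametrization by $g$ on the base stays invertible. This is a delicate but standard bookkeeping exercise that is essentially forced by the inequalities in the definition.
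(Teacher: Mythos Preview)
The paper does not supply its own proof of this statement: it is quoted verbatim as Theorem~2.1 of Berger--Bounemoura~\cite{bb} and used thereafter as a black box. So there is no ``paper's proof'' against which to compare your proposal.

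That said, your outline is the classical Hirsch--Pugh--Shub graph-transform scheme and is in principle a correct route. One point you gloss over is genuinely delicate: when the normal bundle splits as $E^s\oplus E^u$, the \emph{forward} graph transform does not contract the $E^u$ component of a section---it expands it. Your phrase ``once the base has been straightened out by the inverse graph transform in the expanding direction'' gestures at the fix, but as written the claim that $\Gamma_g$ is a contraction on all of $\Sigma$ is false. You should make explicit that one either (i) constructs the center-stable manifold (sections into $E^s$, forward graph transform) and the center-unstable manifold (sections into $E^u$, backward graph transform for $g^{-1}$) separately and then intersects them, or (ii) recasts the fixed-point problem so that the $E^u$ component is determined implicitly by a preimage equation, which does contract by $\lambda$. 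Either way the domination inequalities enter exactly where you say, to control Lipschitz constants under the base reparametrization; but the two normal directions must be handled with opposite time orientations.

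For comparison, the reference~\cite{bb} advertises a ``geometrical'' proof, presumably organized around cone fields and plaque families rather than a Banach contraction on sections; if you want to see an alternative to the functional-analytic route you sketched, that would be the place to look.
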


\subsection{Iterated Function Systems.}

Let $\mathcal{F},\mathcal{G}$ be two families of diffeomorphisms of $M$. Denote by $\mathcal{F} \circ \mathcal{G}:= \{f \circ g  / \quad f \in \mathcal{F}, g \in \mathcal{G}\}$; and for $k \in \N$ denote $\mathcal{F}^0=\{ Id_M\}$ and $\mathcal{F}^{k+1}=\mathcal{F}^{k} \circ \mathcal{F}$. Then, the set $\bigcup_{k=0}^{\infty}\mathcal{F}^k$ has a semigroup structure that is denoted by $\langle \mathcal{F}\rangle^+$ and said to be generated by $\mathcal{F}$. The action of the semigroup $\langle \mathcal{F}\rangle^+$ on $M$ is called the \textbf{iterated function system} associated with $\mathcal{F}$. We denote it by IFS$(\mathcal{F})$. For $x \in M$, the \textbf{orbit} of $x$ by the action of the semigroup $\langle \mathcal{F}\rangle^+$ is $ \langle \mathcal{F}\rangle^+(x)=\{f(x), f \in \langle \mathcal{F}\rangle^+ \}$. A sequence $\{ x_n, \quad n \in \N \}$ is a branch of an orbit of IFS$(\mathcal{F})$ if for every $n \in \N$ there exists $f_n \in \langle \mathcal{F}\rangle^+$ such that $f_n(x_n)=x_{n+1}$.
\begin{defi}
  An IFS$(\mathcal{F})$ is \textbf{minimal} if for every $x \in M$ the orbit $ \langle \mathcal{F}\rangle^+(x)$ has a branch that is dense on $M$. \\
  An IFS$(\mathcal{F})$ is \textbf{$C^r$ robustly minimal} if for every family $\mathcal{\hat{F}}$ of $C^r$ perturbations of $\mathcal{F}$ and every $x \in M$ the orbit $\langle \mathcal{\hat{F}}\rangle^+(x)$ has a branch that is dense on $M$.
\end{defi}

The following result appearing on \cite[Theorem A]{hn} is one of the two key mechanisms required for our construction.

\begin{thm}\label{HNT}
Every boundaryless compact manifold admits a pair of diffeomorphisms that generate a $C^1$ robustly minimal IFS.
\end{thm}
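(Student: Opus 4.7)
The plan is to build the pair of diffeomorphisms in two stages: first a \emph{local} robustly minimal IFS on a small embedded disk $D \subset M$, and then a \emph{global} coupling via a Morse--Smale diffeomorphism that absorbs all of $M$ into $D$. This decomposition is natural because robust minimality on a disk is comparatively easy through classical contractive IFS theory, whereas transferring it to all of $M$ requires a single well-chosen auxiliary map.

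For the local stage I would identify $D$ with the closed unit ball in $\R^d$, $d = \dim M$, and choose two $C^\infty$ strong contractions $\varphi_1, \varphi_2 : D \to D$ satisfying $\varphi_1(D) \cup \varphi_2(D) \supseteq D$ with an overlap of definite size. Classical Hutchinson theory then shows that the attractor of $\langle \varphi_1, \varphi_2\rangle^+$ equals $D$ and every orbit branch is dense in $D$. Both the uniform contraction condition and the covering condition are $C^1$-open, so this local minimality persists for every small $C^1$-perturbation (on a slightly perturbed disk). I would then extend $\varphi_1, \varphi_2$ to diffeomorphisms of $M$ supported in a slightly larger disk $D' \supset D$, equal to the identity outside $D'$.

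For the global stage, pick a Morse--Smale diffeomorphism $h : M \to M$ whose non-wandering set consists of a single attracting fixed point $p \in \mathrm{int}(D)$, a single repelling fixed point $q$ far from $D'$, and finitely many intermediate hyperbolic saddles; such $h$ exists on any closed manifold via a Morse function with unique extrema. Define the candidate pair by
\[
  f_i := \varphi_i \circ h, \qquad i = 1, 2.
\]
Given any $x \in M$ off the stable set of $q$, enough iterates of $f_1$ (essentially of $h$) carry $x$ into the interior of $D$; the local IFS on $D$ then steers the orbit $C^0$-close to any prescribed point, and a final application of $\varphi_i \circ h$ places the orbit inside a prescribed open $V \subset M$. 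Enumerating a countable basis for the topology of $M$ and concatenating such words produces a dense branch for the orbit of $x$, yielding minimality.

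The main obstacle is upgrading the whole argument to a uniform $C^1$-robust statement. One must check that the absorption time into $D$, the word length required by the blender, and the contraction/covering margins all remain uniform on a common $C^1$-neighborhood of $(f_1, f_2)$. This is achieved by fixing quantitative margins in the covering $\varphi_1(D) \cup \varphi_2(D) \supseteq D$, by exploiting hyperbolic persistence of the attractor--repeller pair of $h$ (so that the perturbed basin of $p$ still contains a neighborhood of $M \setminus U_q$ for a small neighborhood $U_q$ of $q$), and by handling the behavior near $q$ carefully so that the perturbed dynamics still funnels generic points into the perturbed disk in a bounded number of iterates. Once those uniform estimates are in place, the same word-length scheme applies to every nearby pair, establishing $C^1$-robust minimality of the IFS generated by $(f_1, f_2)$ on the whole of $M$.
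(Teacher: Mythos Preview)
The paper does not actually prove this theorem; it is quoted verbatim as \cite[Theorem A]{hn} and used as a black box, so there is no ``paper's own proof'' to compare against. What I can do is evaluate your proposal on its own terms, and there is a genuine gap.

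With your definition $f_i = \varphi_i \circ h$ and the extensions $\varphi_i$ chosen to be the identity outside $D'$, the two generators coincide with $h$ on $M \setminus D'$. In particular the repelling fixed point $q$ of $h$ (and every saddle you allowed) lies outside $D'$, so $f_1(q) = \varphi_1(h(q)) = \varphi_1(q) = q$ and likewise $f_2(q) = q$. Hence $q$ is a common fixed point of both generators and its entire IFS orbit is $\{q\}$, which is certainly not dense. The same happens at each saddle. This already kills minimality for the unperturbed system, and the defect is $C^1$-open: any small perturbation of $(f_1,f_2)$ still has a common repelling fixed point near $q$ (it is the hyperbolic continuation of a common fixed point), so robust minimality fails as well. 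Your phrase ``off the stable set of $q$'' signals you noticed something was wrong there, but the definition of minimality in the paper requires a dense branch for \emph{every} $x \in M$, with no exceptional set allowed.

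The standard repair, and essentially what Homburg--Nassiri do, is to make the two generators differ \emph{near the source} as well as near the sink: one map provides the north--south absorption into a disk, and the other is arranged to push the source (and any other periodic points of the first map) off themselves and into the basin of the sink, while also participating in the blending on the disk. Your local contractive-IFS mechanism on $D$ is fine and is close in spirit to the blending region they use; what is missing is the global decoupling of the fixed-point sets of the two generators.
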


 The second key mechanism consists of {\it some way} to \textit{enlarge} open sets. We show that there exists a set $\mathcal{F}$ of $(n+1)$ maps on $\mathbb{T}^n$ that are \textit{almost contractions} and have a bounded $C^1$ distance to the identity, such that the preorbit by $\langle \mathcal{F} \rangle$ of any open set in $\mathbb{T}^n$ contains a big ball for all sufficiently large preiterates.\\
  For the proof, let $S^1$ be the quotient of $[-1,1]$ under $-1\sim 1$ and $\mathbb{T}^n= \prod_{i=1}^{n}S^1 $.\\
  Recall that if $X$ denotes a compact subset of $\mathbb{T}^n$, its \textit{internal radi} is defined as $ir(X):=\max_{x \in X}\{r>0/ \quad B_k{(x,r)} \subset X \}$. If $W$ is open in $\mathbb{T}^n$, $ir(W):=ir(\overline{W})$.

\begin{lema}\label{SRM}
  Given $\alpha >0$, there exists a family $\mathcal{F}=\{g_1,...,g_{n+1}\} \subset Diff^1(\mathbb{T}^n)$ such that:
   \begin{itemize}
     \item $\left\{ x \in \mathbb{T}^n  / \mbox{ for some $i \in \{1,...,n+1\}$, $g_i$ is a contraction at $x$} \right\}=\mathbb{T}^n$.
         \item $\max_{i \in \{1,...,n+1\}}\{ ||Id -g_i||,||Id -Dg_i||\} <\alpha$,
     \item Given $W$ open in $\mathbb{T}^n$, there exist $p_0 \in \mathbb{N}$ such that for all $p \geq p_0$, the inradius of preimage $\langle \mathcal{F}\rangle^{-p}(W)$ is larger than $\frac{\sqrt{n}}{n+1}$.

   \end{itemize}
\end{lema}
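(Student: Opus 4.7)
The plan is to construct the diffeomorphisms $g_1, \ldots, g_{n+1}$ explicitly as small gradient-type perturbations of the identity, each with a strict attracting fixed point, and then derive the inradius claim from the fact that the basins of contraction cover $\mathbb{T}^n$. First I would fix $n+1$ points $p_1, \ldots, p_{n+1} \in \mathbb{T}^n$ placed so that an explicit family of ``contraction neighborhoods'' $U_i \ni p_i$ jointly covers $\mathbb{T}^n$; the exact configuration is tuned to yield the target inradius $\sqrt{n}/(n+1)$ at the end (in the one-dimensional case $n=1$, the pair $\{0,1\} \subset S^1 = [-1,1]/\sim$ already illustrates the idea).

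With the $p_i$'s in hand, I would define each $g_i$ by
$$ g_i(x) := x - \alpha_0 \, \nabla \phi_i(x), $$
where $\phi_i$ is a smooth non-negative bump on $\mathbb{T}^n$ with a strict maximum at $p_i$ and Hessian uniformly positive definite on $U_i$, and $\alpha_0 < \alpha$ is chosen small enough that (a) $g_i$ is a $C^1$ diffeomorphism of $\mathbb{T}^n$, (b) $\|Id - g_i\|_{C^1} < \alpha$, and (c) $Dg_i = I - \alpha_0 \nabla^2 \phi_i$ has operator norm strictly less than one on $U_i$. Since $\bigcup_i U_i = \mathbb{T}^n$, every point of the torus is a contraction point of some $g_i$, which settles the first two bullets at once.

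For the third bullet, the inverses $g_i^{-1}$ are expansions on $g_i(U_i)$ with a uniform factor at least $1+\beta$ for some $\beta > 0$. Given any open $W$, pick a ball $B(y_0, r_0) \subset W$. By the covering property, I would locate $i_1$ with $B(y_0,r_0) \subset g_{i_1}(U_{i_1})$ (possibly after slightly shrinking $r_0$), so that $g_{i_1}^{-1}(W)$ contains a ball of radius $(1+\beta)r_0$. Iterating this, and switching generators whenever the enlarged ball saturates the current expansion region $g_i(U_i)$, after finitely many preimages one of the sets in $\langle \mathcal{F}\rangle^{-p}(W)$ contains a ball of radius $\geq \sqrt{n}/(n+1)$. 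Uniformity of $\beta$ and compactness of $\mathbb{T}^n$ then furnish a uniform $p_0$, independent of $W$.

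The main obstacle will be the quantitative switching step: showing that when a growing preimage exits the expansion region of the current $g_i$ and we pull back by a different $g_j^{-1}$, the resulting set still contains a ball of at least the expected radius, modulo the bounded distortion of $Dg_j^{-1}$ away from $p_j$. Tracking this carefully through long compositions, together with a geometric placement of the $p_i$'s calibrated precisely to the constant $\sqrt{n}/(n+1)$, is where the crux of the quantitative bound lies.
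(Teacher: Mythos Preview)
Your approach differs from the paper's and, as written, has both a local inconsistency and a structural gap.

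The inconsistency: you ask $\phi_i$ to be a non-negative bump with a strict \emph{maximum} at $p_i$ and Hessian uniformly positive definite on $U_i\ni p_i$. These are incompatible, since at a strict maximum the Hessian is negative semidefinite. With $g_i=\mathrm{Id}-\alpha_0\nabla\phi_i$ this makes $p_i$ a \emph{repelling} fixed point, so $g_i$ is an expansion near $p_i$, not a contraction, and the first bullet already fails. Switching to minima (or flipping the sign) repairs this, but it signals the model was not checked.

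The substantive gap is the third bullet. You defer the placement of the $p_i$, the covering $\bigcup_i U_i=\mathbb{T}^n$, and the ``switching step'' to later tuning, yet these are the entire content of the lemma. For a genuine bump the region of definite Hessian is small, so it is not at all automatic that $n+1$ such regions cover $\mathbb{T}^n$; and even granting the cover, your bounded-distortion sketch provides no mechanism preventing the inradius from stalling when you change generators, nor any source for the specific constant $\sqrt{n}/(n+1)$. The paper bypasses all of this with a product construction: take one circle map $\tilde g\colon S^1\to S^1$ that contracts off a tiny interval of length $\sim 4a_0$, set $g=(\tilde g,\dots,\tilde g)$ on $\mathbb{T}^n$, and let $g_i$ be $g$ conjugated by the translation $x\mapsto x+\tfrac{2i}{n+1}$ for $i=0,\dots,n$. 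The key step is then pure pigeonhole: a small connected $W$ has $n$ coordinate projections, which cannot meet all $n+1$ pairwise disjoint expanding intervals on $S^1$, so for some $i$ the map $g_i$ is a coordinate-wise contraction on the whole preimage and $g_i^{-1}(W)$ has strictly larger inradius. This continues until the projections are wide enough to bridge consecutive expanding intervals, i.e.\ until $W$ contains a cube of side about $\tfrac{2}{n+1}$, which is exactly where the bound $\tfrac{\sqrt{n}}{n+1}$ comes from. Your gradient scheme has no analogue of this combinatorial argument, and without one the ``main obstacle'' you flag is not just technical but the missing idea.
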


\begin{figure}[ht]
\begin{center}
\includegraphics[scale=0.275]{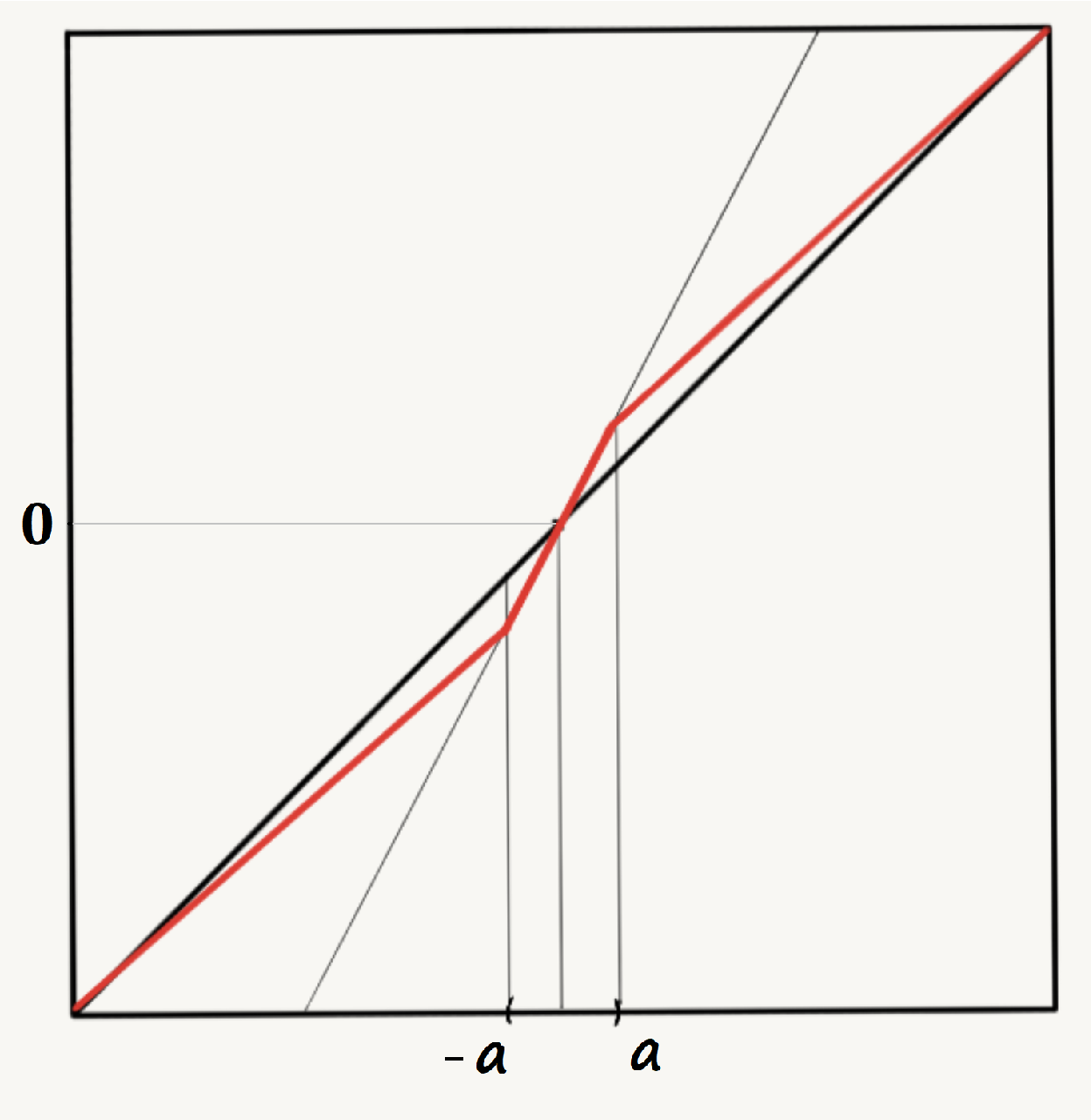}
\caption{$\tilde{g}_a:S^1 \to S^1$ is almost a contraction on $S^1$. }\label{IFSmap}
\end{center}
\end{figure}

\begin{proof}
  Let $a \in \left(0, \frac{1}{4n} \right)$ and $g_a:[-1,1] \to [-1,1]$ a real function given by $$g_a(x)=\left\{ \begin{array}{c}
  \begin{aligned}
 \left(1+ \frac{a\alpha}{2(a-1)}\right)(x+1)-1, & \mbox{\ \ \ \ if $x \in \left[-1, -a \right]$}\\
\left(1+\frac {\alpha}{ 2} \right)x, & \mbox{\ \ \ \ if $x \in \left[ -a, a \right]$} \\
\left( 1+ \frac{a\alpha}{2(a-1)}\right)(x-1)+1, & \mbox{\ \ \ \ if $x \in \left[ a, 1 \right]$}
\end{aligned}
\end{array}\right.$$

  Then $g_a$ is a continuous piece-wise linear function that descends to $S^1$ as shown in Figure \ref{IFSmap}.
  Fix $a_0 \in \left(0, \frac {1}{ 8n} \right]$ so that $||x-g_{a_0}(x)||< \frac{\alpha}{2}$.
  Let $\tilde{g}:S^1 \to S^1$ be a smooth approximation of $g_{a_0}$ such that $||\tilde{g}'||\leq 1 + \frac \alpha 2 $ and $\tilde{g}$ is a contraction on the complement of $\left( -2a_0,  2a_0 \right)$. Afterwards, let $g:\mathbb{T}^n \to \mathbb{T}^n$ be such that $g(x)=\prod_{i=1}^{n}\tilde{g}(x)$ and define a family $\mathcal{F}=\{ g_i: \mathbb{T}^n \to \mathbb{T}^n, 0 \leq i \leq n \mbox{ integer} \}$ such that $g_i(x)=g\left(x- \frac{2i}{n+1}\right)+ \frac{2i}{n+1}$. We show next that $\mathcal{F}$ is a family that satisfies the claims at the thesis of the lemma. Observe that the first three properties are straightforward from the construction. \\ To prove the last assertion, notice that for any small and connected open set $W$, since $g_i$ are almost contractions, $\langle \mathcal{F}\rangle^{-1}(W)$ will display a larger inradi than $W$ unless the canonical projection of $W$ intersects any of the (disjoint) expanding regions of the $g_i$, but it is not possible that $W$ has $n$ projections within $n+1$ disjoint intervals. Hence, for some $i$, $g_i^{-1}(W)$ contains an open set that has a larger inradi than $W$. This argument will remain to hold until $W$ becomes large enough so that it intersects two (or more) of the expanding disjoint regions of the $g_i$, but in this case it has to be $ir \left( \langle \mathcal{F}\rangle^{-p_0}(W) \right) \geq \frac{2\sqrt{n}}{n+1}-2a_0$ for some $p_0$, which yields  $ir \left( \langle \mathcal{F}\rangle^{-p}(W) \right) > \frac{\sqrt{n}}{n+1}$ for all $p \geq p_0$.\end{proof}
\begin{rk}
  The family $\mathcal{F}$ is $C^1$-close to the Identity map.
\end{rk}

The union of the sets given by Theorem \ref{HNT} and Lemma \ref{SRM} yield the following:
\begin{clly}\label{cllySRM}
  There exists a family $\mathcal{F}=\{g_1,...,g_{n+3}\} \subset Diff^1(\mathbb{T}^n)$ such that:
   \begin{itemize}
     \item IFS$(\mathcal{F})$ is $C^1$ robustly minimal,
    \item Given $W$ open in $\mathbb{T}^n$, there exist $p_0 \in \mathbb{N}$ such that for all $p \geq p_0$, the preimage $\langle \mathcal{F}\rangle^{-p}(W)$ contains an open set of diameter larger than $\frac{\sqrt{n}}{n+1}$.\hfill $\Box$
   \end{itemize}
\end{clly}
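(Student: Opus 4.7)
The plan is simply to take the union of the two families provided by Theorem \ref{HNT} and Lemma \ref{SRM}. Explicitly, let $\{h_1,h_2\}\subset \mathrm{Diff}^1(\mathbb{T}^n)$ be the pair of diffeomorphisms furnished by Theorem \ref{HNT} generating a $C^1$ robustly minimal IFS, and let $\{g_1,\dots,g_{n+1}\}\subset \mathrm{Diff}^1(\mathbb{T}^n)$ be the family furnished by Lemma \ref{SRM} for a fixed (small) $\alpha>0$. Define $\mathcal{F}:=\{h_1,h_2,g_1,\dots,g_{n+1}\}$, which has exactly $n+3$ elements. The point is that enlarging the generating set of an IFS preserves both properties of interest, so the corollary should follow with essentially no extra work.

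For the first item, robust minimality, I would argue as follows. Any semigroup contains the sub-semigroup generated by a subset of its generators, so $\langle \mathcal{F}\rangle^+ \supset \langle \{h_1,h_2\}\rangle^+$, and therefore every orbit of IFS$(\mathcal{F})$ has a branch which is an orbit of IFS$(\{h_1,h_2\})$. Since the latter has a dense branch through every point of $\mathbb{T}^n$, so does the former. Robustness is immediate from the same remark applied coordinate-wise: given any $C^1$ small perturbation $\hat{\mathcal{F}}=\{\hat h_1,\hat h_2,\hat g_1,\dots,\hat g_{n+1}\}$ of $\mathcal{F}$, the pair $\{\hat h_1,\hat h_2\}$ is still a $C^1$ small perturbation of $\{h_1,h_2\}$, hence generates a minimal IFS by Theorem \ref{HNT}, and the same branch argument applies to $\hat{\mathcal{F}}$.

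For the second item, fix $W\subset \mathbb{T}^n$ open and apply Lemma \ref{SRM} to the subfamily $\{g_1,\dots,g_{n+1}\}$: there exists $p_0\in\mathbb{N}$ and, for every $p\geq p_0$, a composition $w_p\in \langle \{g_1,\dots,g_{n+1}\}\rangle^+$ of length exactly $p$ such that $w_p^{-1}(W)$ has inradius greater than $\frac{\sqrt{n}}{n+1}$. Since $w_p\in \langle \mathcal{F}\rangle^+$ as well (it is a length-$p$ word in the larger alphabet), we have $w_p^{-1}(W)\subset \langle \mathcal{F}\rangle^{-p}(W)$. Thus $\langle \mathcal{F}\rangle^{-p}(W)$ contains an open ball of radius greater than $\frac{\sqrt{n}}{n+1}$, and in particular an open set of diameter greater than $\frac{\sqrt{n}}{n+1}$, as required.

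There is no real obstacle here: the corollary is a formal consequence of the two preceding results, and the only thing to verify is that the preimage-enlargement mechanism of Lemma \ref{SRM} is not destroyed by adjoining extra generators, which it cannot be since preimages under a larger semigroup only get larger. The only mild subtlety is to carry out the argument at the level of a specific word $w_p$ rather than of the ambient semigroup, so that the inradius estimate is genuinely inherited.
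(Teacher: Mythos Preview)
Your proposal is correct and follows exactly the approach the paper indicates: the corollary is obtained by taking the union of the family from Theorem~\ref{HNT} with the family from Lemma~\ref{SRM}, and both properties are inherited because enlarging the generating set only enlarges orbits and preimage sets. The paper records this in a single line before the statement and closes with a box, so your write-up is in fact more detailed than the original; your remark about extracting a specific word $w_p$ is a nice touch but not strictly necessary, since already $\langle\{g_1,\dots,g_{n+1}\}\rangle^{-p}(W)\subset \langle\mathcal{F}\rangle^{-p}(W)$ directly transfers the inradius bound.
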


Having stated all the preliminary facts needed to construct the example map satisfying the claim at the thesis of Theorem \ref{main}, we proceed to it in two steps. But first, an overview of the whole construction in a simpler setting is given in Section 3. Next up, in Section 4, we define an endomorphism of $\mathbb{T}^n$ (named $f$) that is $C^1$ robustly transitive. To do so, we use the result given by Corollary \ref{cllySRM} to create a robust blending region for $f$ supported on a strict subset $X$ of $\mathbb{T}^n$, aided by a field of unstable cones.\\
Once this is achieved, we move on into Section 5 where the second step of the construction takes place by introducing critical points artificially inside the complement of $X$ in $\mathbb{T}^n$. The surgery is performed such that the critical points existence is robust and the blending region is unaffected, resulting in a new map (named $F$) that satisfies the claim at Theorem \ref{main}.

\section{A low dimensional example.}

Let us begin giving a full proof of Theorem \ref{main} when $n=3$ and $k=2$. After this simpler approach, the reader will grasp the ideas supporting the construction. The general case is proved with the same ideas in a more complex environment. The main Theorem adopts the following form:

\begin{thm}\label{lowmain}
  There exists a smooth map $F: \mathbb{T}^3 \to \mathbb{T}^3$ satisfying:
  \begin{itemize}
    \item $F$ is $C^1$ robustly transitive.
    \item $F$ is robustly singular.
    \item The central space of $F$ is $2$-dimensional.
  \end{itemize}
\end{thm}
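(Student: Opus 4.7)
I follow the two-step outline of Section 2: first build a $C^1$ robustly transitive regular endomorphism $f$ of $\mathbb{T}^3$ with a two-dimensional central bundle, then perturb $f$, in a region disjoint from the blending mechanism, so as to create persistent critical points without destroying transitivity. Write $\mathbb{T}^3 = S^1 \times \mathbb{T}^2$, with $S^1$ carrying the strong unstable dynamics and $\mathbb{T}^2$ the centre. Fix a large integer $\lambda$ (to be chosen) and a family $\mathcal{G}=\{g_1,\ldots,g_5\}\subset\mathrm{Diff}^1(\mathbb{T}^2)$ provided by Corollary \ref{cllySRM} applied to $\mathbb{T}^2$: it is $C^1$ robustly minimal and enjoys the inflation property $ir(\langle\mathcal{G}\rangle^{-p}(W))\geq \sqrt{2}/3$ for every open $W\subset\mathbb{T}^2$ and every $p\geq p_0(W)$. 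Partition $S^1$ as $I_0\sqcup I_1\sqcup\cdots\sqcup I_5$ with $I_0$ taking up most of $S^1$ and each $I_j$ ($j\geq 1$) a thin arc, separated by narrow buffer zones, and define $f(x,y,z):=(\lambda x,\, g_j(y,z))$ for $x$ in the core of $I_j$, $j\geq 1$, interpolated smoothly on the buffers, with $f\equiv f_0:(x,y,z)\mapsto(\lambda x,y,z)$ on $I_0\times\mathbb{T}^2$. The blending region is $X:=(S^1\setminus I_0)\times\mathbb{T}^2$.

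For transitivity, choose $\lambda$ much larger than $\max_j \|Dg_j\|$; this makes a narrow cone field $C^u$ around the $x$-direction strictly $Df$-invariant with uniform expansion, both properties persisting on a $C^1$-neighbourhood $\mathcal{U}_f$. Given open $U,V\subset\mathbb{T}^3$ and $g\in\mathcal{U}_f$, the aim is to produce $n$ with $g^n(U)\cap V\neq\emptyset$, which yields transitivity via Proposition \ref{equi}(2). Forward, the $C^u$-expansion guarantees that for $N$ large, $g^N(U)$ contains a thin tubular neighbourhood of a $C^u$-tangent curve $\gamma_N$ whose $x$-projection covers $S^1$. Backward, choosing a branch of $g^{-M}$ corresponding to a long word $g_{j_M}^{-1}\circ\cdots\circ g_{j_1}^{-1}$, the resulting pre-iterate of $V$ has central projection containing a disc $D\subset\mathbb{T}^2$ of diameter $\geq\sqrt{2}/3$, by Corollary \ref{cllySRM}. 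The robust minimality of $\mathcal{G}$ allows the word to be steered so that $D$ meets the $(y,z)$-fibre swept by $\gamma_N$; combined with $\pi_x(\gamma_N)=S^1$, this forces $g^{N+M}(U)\cap V\neq\emptyset$ uniformly in $g\in\mathcal{U}_f$.

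To obtain $F$, perform a localized $C^1$-small surgery inside $I_0\times\mathbb{T}^2$: replace $x\mapsto\lambda x$ on a tiny sub-arc of $I_0$ by a smooth profile whose derivative has a nondegenerate zero at an interior point. This injects a codimension-one critical locus into $\mathbb{T}^3$; the transverse vanishing of $\det DF$ makes the critical set persist under $C^1$-small perturbations, so $F$ is robustly singular. Because the surgery is strictly supported away from $X$ and is $C^1$-small, the cone field $C^u$ remains invariant off a small neighbourhood of the critical locus, the blending mechanism is unaltered, and the $(y,z)$-bundle is still $DF$-invariant and two-dimensional; the transitivity argument above therefore carries over to $F$ and its $C^1$-perturbations, completing the three bullets of Theorem \ref{lowmain}.

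The \textbf{main obstacle} is the matching of forward and backward inflations in the transitivity step: one must simultaneously control the geometry of $\gamma_N$, select the word in $\langle\mathcal{G}\rangle^+$ that places the inflated disc $D$ in the right $(y,z)$-fibre, and verify that this pairing survives any $C^1$-perturbation $g$ of $f$, which may wiggle slice boundaries, the IFS generators, and the cone field at the same time. It is precisely here that the quantitative inflation rate $\sqrt{2}/3$ from Corollary \ref{cllySRM}, the robust minimality of the IFS, and the robust invariance of $C^u$ must be combined carefully; a secondary issue in Stage 3 is that the critical-point surgery must be quantitatively mild, so that cone invariance is preserved off a thin neighbourhood of the critical locus and the $C^1$-neighbourhood built in Stage 2 survives intact.
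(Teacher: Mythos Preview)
Your construction of $f$ and the outline of the transitivity mechanism are in the same spirit as the paper's, though the paper makes the robustness step precise via normally hyperbolic submanifolds: the slices $\{2i/(\lambda-1)\}\times\mathbb{T}^2$ persist under $C^1$ perturbation by Theorem \ref{NHSP}, and the restricted dynamics on their continuations furnish the perturbed IFS generators. Your informal ``choose a branch of $g^{-M}$ corresponding to a word in $\langle\mathcal{G}\rangle^+$'' needs exactly this to become rigorous for a general $g\in\mathcal{U}_f$.

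The genuine gap is the singular surgery. You propose to replace $x\mapsto\lambda x$ on a sub-arc of $I_0$ by a profile whose derivative has a zero. This is \emph{not} $C^1$-small: the derivative drops from $\lambda\gg 1$ to $0$, so the perturbation has $C^1$ size at least $\lambda$, contradicting your own hypothesis and placing $F$ outside any $C^1$-neighbourhood of $f$ on which the cone and transitivity arguments were established. More seriously, at and near such a point the cone $C^u_\kappa$ is neither forward-invariant nor expanding: for $v=(v_1,v_2,v_3)\in C^u_\kappa$ with $(v_2,v_3)\neq 0$ the image has first component $h'(x)v_1\approx 0$ while $(v_2,v_3)$ is unchanged, so $\|(w_2,w_3)\|/|w_1|$ blows up and the vector leaves the cone; and the estimate $\|DF(v)\|\geq 6\|v\|$ fails outright. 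Curves tangent to $C^u$ that meet the fold can be collapsed rather than stretched, so the forward-inflation step of your transitivity argument breaks down precisely on the region you modified, and there is no reason the argument should ``carry over''.

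The paper avoids this by inserting the singularity in a \emph{central} coordinate: inside a ball disjoint from $\tilde K\times\mathbb{T}^2$ it sets
\[
F(x,y,z)=\bigl(\lambda x,\ y,\ z-\varphi(z)\,\psi(x^2+y^2)\bigr),
\]
so the $(1,1)$ entry of $DF$ is $\lambda$ everywhere and $\det DF=\lambda\,(1-\varphi'\psi)$ vanishes on a persistent hypersurface. With this choice the cone field and the uniform expansion survive globally, including at critical points (Lemmas \ref{lowconoF1} and \ref{lowconoF2}), and the transitivity proof for $f$ transfers to $F$ unchanged. The correction to your proposal is therefore to relocate the surgery from the expanding coordinate to one of the two central coordinates.
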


\subsection{Construction of $f$.}

For the rest of the section, let the quotient $(\mathbb{R} / [-1,1])^3$ be $\mathbb{T}^3=S^1 \times \mathbb{T}^{2}$  and endow it with the standard (euclidean) riemannian metric.\\
Start fixing $\mathcal{F}_1=\{m_3,m_4\}$ the family given by Theorem \ref{HNT} for the second factor $\mathbb{T}^{2}$.

\begin{rk}\label{lowF1F2} \phantom{.}
\begin{enumerate}
  \item $\max \{ ||Id_{\mathbb{T}^2}-m_3||, ||Id_{\mathbb{T}^2}-m_4||\} =: M \in \mathbb{R}$ exists.
\item As is stated in \cite{hn}, $\mathcal{F}_1$ can be constructed such that for given small $\delta >0$ the jacobians satisfy $( ||Dm_3|| -1)^2+( ||Dm_4||-1)^2<\delta$.
    \end{enumerate}
\end{rk}

Follow fixing two real numbers $r$ and $\kappa$ such that $0< r < \frac{1}{20}$ and $0 < \kappa <3$ ($\kappa$ will be useful to prove that the sought map admits unstable cones), and let $\widehat{A} \in \mathcal{M}_3(\Z)$ be the diagonal matrix below, with a large integer $\lambda$  in the first column satisfying $\lambda >> \left\{ \frac{1}{r}, \frac{80M}{r \kappa}\right\}$, where $M$ is defined in 1 at Remark \ref{lowF1F2}. The other two entries in $A$ are equal to $1$.

 \begin{equation}\label{lowmatrizAgorro}
   \widehat{A}= \left(\begin{array}{ccc}
 \lambda  & 0 & 0\\
0  & 1 & 0 \\
0  & 0 &1\\
\end{array}
\right).
 \end{equation}

The matrix $\widehat{A}$ induces a regular endomorphism $A$ on the torus defined by
\begin{equation}\label{lowendoA}
  A:\T^3\rightarrow\T^3 / \quad A(x_1,x_2,x_3)= (\lambda x_1,x_2, x_3).
\end{equation}

\begin{rk}\label{lowrklambda}\phantom{.}
  \begin{enumerate}
    \item  Since $r <\frac{1}{20}$, then $\lambda >20$ hence $A$ displays more than 5 fixed points. In particular, $Fix(A)=\left\{ \left(\frac{2i}{\lambda -1},p_2,p_3 \right) \in \mathbb{T}^3, i \in \mathbb{Z} \cap [0, \lambda -1) \right\}$.
         \item The choice of a very large value of $\lambda$ yields a very strong unstable space.
        \item The map $A$ is {\it modulo 2}. Even when we do not state it explicitly, it applies for all maps of $\T^3$ defined along this section.
    \item The construction is possible for any $\lambda_0 \in \Z$ such that $|\lambda_0|>1$ since there would be a power of $\widehat{A}$ such that the first entry would be larger than $\lambda$. It follows that the construction holds for any linear map in the isotopy class of maps with one eigenvalue of modulus larger than one.

  \end{enumerate}
\end{rk}

Let $r$ be the real number fixed at the beginning of the section and define 5 disjoint subsets (shrink $r$ if necessary) of $S^1$ by $K_i=\left[\frac{2i}{\lambda -1}-r,\frac{2i}{\lambda -1}+r \right]$ and let $K= \bigcup_{0 \leq i \leq 4} K_i $. Define next $\tilde{K}_{i}=\left[\frac{2i}{\lambda -1}-2r,\frac{2i}{\lambda -1}+2r \right]$ and let $\tilde{K}= \bigcup_{0 \leq i \leq 4}\tilde{K} $.\\

For each $i$, define a smooth map $u_i: \R \rightarrow \R$ such that $u_i{|K_i}=1$ and $u_i{|\tilde{K}_i^c}=0$ (see Figure \ref{lowgraficou}) and let $u=\sum_{i=0}^{4}u_i$. Observe that $||u'||:=\max \{ |u'(x)|, x \in \R \}< \frac{2}{r}$.

\begin{figure}[ht]
\begin{center}
\includegraphics[scale=0.5]{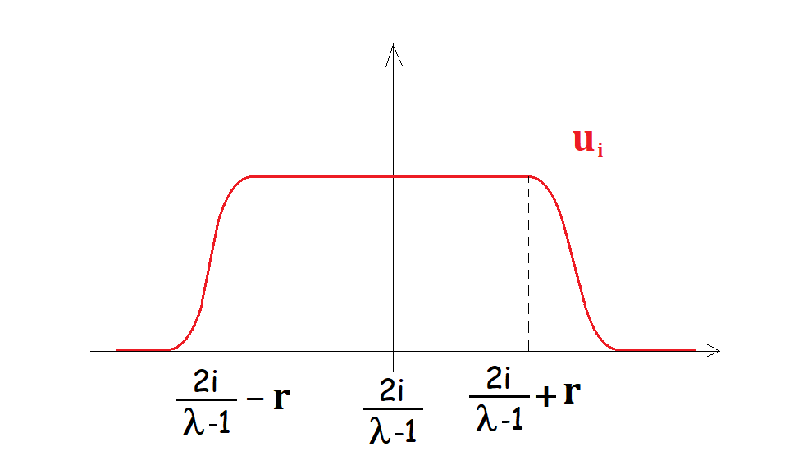}
\caption{Graph of $u_i$.}\label{lowgraficou}
\end{center}
\end{figure}

Finally, let $\mathcal{F}_2=\{g_0,g_1,g_2\}$ be the family given by Lemma \ref{SRM} for the second factor $\T^{2}$, satisfying the properties claimed in the Lemma for $\alpha \leq \frac{\kappa}{40||u'||}$. The family from Corollary \ref{cllySRM} becomes $\mathcal{F}=\mathcal{F}_1 \cup \mathcal{F}_2$. Define \begin{equation}\label{lowmainmapita}\hat{f}:\tilde{K} \times \mathbb{T}^{2} \rightarrow \mathbb{T}^3  /\quad  \hat{f}(x,y,z)=
\left\{
                                \begin{array}{c}
                                \begin{aligned}
(\lambda x , g_i(y,z)) & \mbox{  if $x \in \tilde{K}_i $, $i \in \{0,1,2\}$} \\
(\lambda x , m_i(y,z)) & \mbox{  if $x \in \tilde{K}_i $, $i \in \{3,4\}$}
\end{aligned}
\end{array}\right.\end{equation}

and extend $\hat{f}$ to $f: \mathbb{T}^3 \rightarrow \mathbb{T}^3$ given by \begin{equation}\label{lowmainmap}
 f(x,y,z)=\left\{
                                \begin{array}{c}
                                \begin{aligned}
u(x).\hat{f}(x,y,z)+(1-u(x)).A(x,y,z) & \mbox{ if $x \in \tilde{K}$} \\
A(x,y,z) & \mbox{ if $x \notin \tilde{K}$}
\end{aligned}
\end{array}\right.
\end{equation}

\begin{rk}\label{lowrkdinf}

 The following properties are straightforward to check:
\begin{enumerate}
  \item If $\hat{f}(x,y,z)=(\lambda x,\hat{f}_2(y,z))$, then $f(x,y,z)=(\lambda x,u(x).\hat{f}_2(y,z)+(1-u(x)).(y,z))$.
   \item Since $\|Dg_i\| \leq \frac 3 2 $ for all $i$, combined with Remark \ref{lowF1F2} gives $\|D\hat{f}_2\|<2$.
  \item By construction of $f$, $\| Id -\hat{f}_2\| \leq \max\left\{M, \frac{\kappa}{40||u'||}\right\}$.
  \item The restriction $f_{|\left(K \times \T^{2}\right)}=\hat{f}$.
  \item The restriction $f_{{| \left( \tilde{K} \times \T^{2}\right)}^c}=A$.

\end{enumerate}
\end{rk}

\subsubsection{Dynamics of $f$.}

 The most evident dynamical feature $f$ has is a strongly dominant expanding subspace along the first coordinate. It follows that there exists a family of unstable cones for $f$ in its direction. We make a pause here to check the existence of such an unstable cone field for $f$.\\
Recall that for $x \in M$, we call  \textbf{cone} of parameter $a$, index $n-k$ and vertex $x$  to  $$C^{u}_{a} (x)=\left\{ (v_{1},...,v_{n}) \in T_x M / \frac{\Vert (v_{k+1},...,v_{n}) \Vert}{\Vert (v_{1}, v_2,...,v_k) \Vert}<a \right\}$$ and that $f$ admits an \textbf{unstable cone} of parameter $a$ and vertex $x \in M$ if there exists $C^u_a(x) \subset T_xM$ such that $\overline{D_x f(C^u_a(x)) }\setminus \{0\} \subset C^u_a(f(x))$.

\begin{lema}\label{lowconosf}
  The map $f$ defined by Equation (\ref{lowmainmap}) admits an unstable cone of parameter $\kappa$, index 2 and vertex $(x,y,z)$ at every $(x,y,z) \in \T^3$.
\end{lema}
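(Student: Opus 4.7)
The plan is to verify the cone invariance pointwise by writing down the differential $Df$ explicitly and estimating its off-diagonal block against the dominant eigenvalue $\lambda$.

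First I would decompose the argument by the support of the perturbation. Outside $\tilde{K}\times\T^2$, one has $f=A$ and hence $Df=\mathrm{diag}(\lambda,1,1)$, which obviously sends $C^u_\kappa(x,y,z)$ into itself since a vector $(v_1,v_2,v_3)$ with $\|(v_2,v_3)\|<\kappa|v_1|$ is mapped to $(\lambda v_1,v_2,v_3)$, and the new ratio is $\|(v_2,v_3)\|/(\lambda|v_1|)<\kappa/\lambda<\kappa$. So only the points of $\tilde{K}\times\T^2$ require work.

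Inside the perturbation region, using Remark \ref{lowrkdinf}(1), write $f(x,y,z)=(\lambda x,f_2(x,y,z))$ with $f_2(x,y,z)=u(x)\hat{f}_2(y,z)+(1-u(x))(y,z)$. The Jacobian then has the block form
\begin{equation*}
Df_{(x,y,z)}=\begin{pmatrix}\lambda & 0 \\ u'(x)\bigl(\hat{f}_2(y,z)-(y,z)\bigr) & u(x)D_{(y,z)}\hat{f}_2+(1-u(x))I_2\end{pmatrix}.
\end{equation*}
Given $(v_1,v_2,v_3)\in C^u_\kappa(x,y,z)$, I would take the image $(w_1,w_2,w_3)=Df(v_1,v_2,v_3)$ and use the cone condition $\|(v_2,v_3)\|<\kappa|v_1|$ together with Remark \ref{lowrkdinf}(2) ($\|D\hat{f}_2\|<2$) to bound
\begin{equation*}
\|(w_2,w_3)\|\le |v_1|\cdot\|u'\|\cdot\|\mathrm{Id}-\hat{f}_2\|+2\|(v_2,v_3)\|<\bigl(\|u'\|\cdot\|\mathrm{Id}-\hat{f}_2\|+2\kappa\bigr)|v_1|.
\end{equation*}
Since $w_1=\lambda v_1$, invariance of the cone reduces to the scalar inequality $\|u'\|\cdot\|\mathrm{Id}-\hat{f}_2\|+2\kappa<\kappa\lambda$, equivalently $\|u'\|\cdot\|\mathrm{Id}-\hat{f}_2\|<(\lambda-2)\kappa$.

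The final step is to verify this inequality from the quantitative choices made in the construction. By Remark \ref{lowrkdinf}(3), $\|\mathrm{Id}-\hat{f}_2\|\le\max\{M,\kappa/(40\|u'\|)\}$. In the case where the max is $\kappa/(40\|u'\|)$ the left side is at most $\kappa/40$, which is trivially below $(\lambda-2)\kappa$ since $\lambda>20$. In the case where the max is $M$, using $\|u'\|<2/r$ one gets $\|u'\|\cdot M<2M/r$, and the requirement becomes $2M/r<(\lambda-2)\kappa$; this is exactly what the hypothesis $\lambda\gg 80M/(r\kappa)$ was tailored to give (with plenty of slack). The main (really, only) obstacle is bookkeeping: ensuring that the constants $r$, $\kappa$, $M$, $\|u'\|$ and $\lambda$ chosen at the start of Section 3.1 are compatible with the two cases of the $\|\mathrm{Id}-\hat{f}_2\|$ bound. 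Once that is checked, $\overline{Df(C^u_\kappa)}\setminus\{0\}\subset C^u_\kappa$ follows at every point of $\T^3$.
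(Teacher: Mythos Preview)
Your proposal is correct and follows essentially the same approach as the paper: split into the trivial region where $f=A$, write the differential over $\tilde K\times\T^2$ in block form, and bound the central component of the image against $\lambda|v_1|$ using $\|D\hat f_2\|<2$ together with the bound on $\|u'\|\cdot\|\mathrm{Id}-\hat f_2\|$. The only cosmetic difference is that the paper writes the $3\times3$ matrix out entrywise and then lists the needed inequalities while leaving the arithmetic to the reader, whereas you keep the $2\times2$ block presentation and carry out the two-case analysis of $\max\{M,\kappa/(40\|u'\|)\}$ explicitly; this makes your write-up slightly more complete than the paper's.
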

\begin{proof}

The calculations only need to be done over $\tilde{K} \times \mathbb{T}^2$, since in its compliment $f=A$. For the proof, let $\hat{f}_2(y,z)=(p(y,z),q(y,z))$. The differential of $f$ at $(x,y,z) \in \tilde{K} \times \mathbb{T}^2$ is given by $$
                                                D_{(x,y)}f=\left( \begin{array}{ccc}
                                                   \lambda & 0 & 0 \\
                                                   u'(x).(p(y,z)-y) & u(x).\frac{\partial p}{\partial y}(y,z)+1-u(x)  & u(x).\frac{\partial q}{\partial y}(y,z)\\
                                                   u'(x).(q(y,z)-z) & u(x).\frac{\partial p}{\partial z}(y,z) &  u(x).\frac{\partial q}{\partial z}(y,z)+1-u(x) 
                                                 \end{array}\right).
                                               $$ For all $(v_1,v_2,v_3) \in T_{(x,y,z)}\T^3$, the differential $D_{(x,y,z)}f(v_1,v_2,v_3)$ is given by
 $$\left( \begin{array}{c}
                                                   \lambda v_1 \\
                                                   \left[u'(x).(p(y,z)-y)\right]v_1 + \left[u(x).\frac{\partial p}{\partial y}(y,z)+1-u(x)\right]v_2 +\left[u(x).\frac{\partial q}{\partial y}(y,z)\right]v_3\\
                                                   \left[u'(x).(q(y,z)-z)\right]v_1 + \left[u(x).\frac{\partial p}{\partial z}(y,z) \right]v_2 +\left[u(x).\frac{\partial q}{\partial z}(y,z)+1-u(x) \right]v_3
                                                 \end{array}\right).$$

Consider all $(v_1,v_2,v_3)$ in $C^u_\kappa(x,y,z)$ and let $(w_1,w_2,w_3):=Df_{(x,y,z)}(v_1,v_2,v_3)$, we prove that $C^u_\kappa$ is unstable by computing $\frac{||(w_2,w_3)||}{|w_1|}$. After a straightforward calculation the reader can verify, along with the facts listed below, that $\frac{||(w_2,w_3)||}{|w_1|} \leq \frac{|w_2|+|w_3|}{||w_1||} \leq 2.\frac{|\frac{\kappa}{40}v_1+2v_2+2v_3|}{|\lambda v_1|}+\frac{|v_2|+|v_3|}{|\lambda v_1|} \leq \frac{\kappa}{20\lambda}+\frac{10\kappa}{\lambda}<\frac{11\kappa}{\lambda}<\kappa$.
\begin{itemize}
\item $ (v_1,v_2,v_3) \in C^{u}_{\kappa} (x,y,z)$,
\item $\max_{(y,z) \in \T^2}  \{ ||p(y,z)-y||,||q(y,z)-z|| \} \leq ||\hat{f}_2-Id||$
\item $|u'(x)|.||\hat{f}_2(y,z)-(y,z)|| \leq \frac{\kappa}{40}$ by Remark \ref{lowrkdinf} and definition of $\lambda$ and $\mathcal{F}_2$,
\item $\max \left\{ ||p'||,||q'|| \right\} \leq ||D\hat{f}_2||<2$ by Remark \ref{lowrkdinf},
\item $ \| Id\| \leq 2 $,
\item $ \max_{x \in \R} \{|u(x)|,|1-u(x)|\}\leq 1$,
\item $\lambda >20$. \end{itemize} \end{proof}

\begin{rk}\label{iguales}
Notice how 

$$ D_{(x,y)}f=\left( \begin{array}{ccc}
                                                   \lambda & 0 & 0 \\
                                                   u'(x).(p(y,z)-y) & u(x).\frac{\partial p}{\partial y}(y,z)+1-u(x)  & u(x).\frac{\partial q}{\partial y}(y,z)\\
                                                   u'(x).(q(y,z)-z) & u(x).\frac{\partial p}{\partial z}(y,z) &  u(x).\frac{\partial q}{\partial z}(y,z)+1-u(x) 
                                                 \end{array}\right) $$
can be posed as 
 $$ D_{(x,y)}f=\left( \begin{array}{cc}
                                                   \lambda & 0 \\
                                                   u'(x).((p,q)(y,z)-(y,z)) & u(x). D_{(y,z)}(p,q) +(1-u(x))Id
                                               \end{array}\right)$$ if $(v_1,v_2,v_3)$ is considered as $(v_1, (v_2,v_3))$.
\end{rk}

\begin{rk}
  Since $ D \hat{f}_2 $ is very close to $Id_{\T^2}$, for all $q \in \T^3$ there exist invariant vectors $(0,v_2,v_3) \in T_q\T^3$ such that $\Vert D_q(0,v_2,v_3)\Vert \sim \Vert (0,v_2,v_3)\Vert$.
\end{rk}
\begin{lema}\label{lowestiraf}
  For all $p \in \mathbb{T}^3$ and all $ v \in C^{u}_{\kappa} (p)$, $ \Vert D_{p}f (v) \Vert > 6 \Vert v \Vert $.
\end{lema}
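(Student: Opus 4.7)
The plan is to exploit the dominant stretching in the first coordinate together with the cone condition, which restricts any $v \in C^u_\kappa(p)$ to be not much longer than its first component. Everything else the differential does (the possibly perturbing entries in the lower rows) is irrelevant for a \emph{lower} bound on $\|D_pf(v)\|$, since the first coordinate of $D_pf(v)$ is simply $\lambda v_1$ regardless of where $p$ lies.

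More concretely, I would argue as follows. First, from the cone condition $\|(v_2,v_3)\| < \kappa|v_1|$ and $\kappa < 3$, one gets
\begin{equation*}
\|v\|^2 = v_1^2 + \|(v_2,v_3)\|^2 < (1+\kappa^2)v_1^2,
\end{equation*}
so $\|v\| < |v_1|\sqrt{1+\kappa^2} < |v_1|\sqrt{10}$. Second, reading off the matrix $D_{(x,y,z)}f$ computed in the proof of Lemma \ref{lowconosf}, the first coordinate of $w := D_pf(v)$ is exactly $w_1 = \lambda v_1$, hence
\begin{equation*}
\|D_pf(v)\| \geq |w_1| = \lambda|v_1| > \frac{\lambda}{\sqrt{10}}\,\|v\|.
\end{equation*}
Since $r < \frac{1}{20}$ forces $\lambda > 20$ (by Remark \ref{lowrklambda}), we have $\lambda/\sqrt{10} > 20/\sqrt{10} > 6$, which is precisely the claim. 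The same inequality holds trivially outside $\tilde{K}\times\T^2$ because there $f=A$, which acts as multiplication by $\lambda$ on the first coordinate and the identity on the other two, and the cone argument above is identical.

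I do not anticipate a real obstacle: the lemma is essentially a one-line consequence of the cone opening $\kappa$ and of the fact that the expansion rate $\lambda$ in the unstable direction was chosen enormous compared with $\kappa$. The only mild care needed is to verify that the cone estimate is uniform in $p$, but the first column of $D_pf$ is $(\lambda,\, u'(x)(p(y,z)-y),\, u'(x)(q(y,z)-z))^t$ and we are only using its top entry; the lower entries can only help, never hurt, a lower bound of the form $\|D_pf(v)\| \geq \lambda|v_1|$.
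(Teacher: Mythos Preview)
Your argument is correct and is essentially identical to the paper's: both bound $\|D_pf(v)\|$ below by $|\lambda v_1|$, use the cone condition to get $\|v\|^2<(1+\kappa^2)v_1^2$, and conclude from $\lambda>20$, $\kappa<3$ that $\lambda^2/(1+\kappa^2)>36$. The only cosmetic difference is that the paper squares the ratio $\|D_pf(v)\|/(6\|v\|)$ first, while you take square roots along the way.
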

\begin{proof}
Let $v=(v_{1},v_{2},v_3) \in C^{u}_{\kappa} (p)$ and recall $0< \kappa <3$ and $\lambda >20$, then $\left( \frac{\Vert D_{p}f (v) \Vert}{6.\Vert v \Vert}\right) ^{2} \geq \frac{(\lambda.||v_{1}||)^{2}}{36.(||v_1||^2+||(v_2,v_3)||^2)}  \geq  \frac{\lambda^2}{36\left(1+{\left(\frac{||(v_2,v_3)||}{||v_1||}\right)}^{2}\right)}>\frac{\lambda^2 }{36(1+\kappa^2)} >\frac{ 400}{ 360}>1.$ \end{proof}

\begin{rk}\label{lowkappachico}
  Since the definition of unstable cone is independent of the construction of $f$, $\kappa$ can be chosen small enough such that for all curves $\gamma$ satisfying that the derivative $\gamma' \subset C^u_{\kappa}(\gamma)$ at all times, then inradius and diameter of $\gamma$ can be identified. For the rest of the section assume that $\kappa$ is small enough so that this identification holds. Notice also that for all curves $\gamma$ with derivative inside the cone at all times and $ir(\gamma) \geq 2$, the projection of $\gamma$ to $S^1$ is surjective.
\end{rk}

\begin{clly}\label{lowdiametro}
 For all curves $\gamma$ such that for all $t$ where $\gamma$ is defined it holds that $\gamma'(t) \subset C^u_{\kappa}(\gamma(t))$, the diameter satisfies $diam(f(\gamma))> 6diam(\gamma)$. \hfill $\Box$
\end{clly}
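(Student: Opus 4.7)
The plan is to combine three ingredients already established in the excerpt: the invariance of the unstable cone field under $Df$ (Lemma \ref{lowconosf}), the uniform expansion of cone vectors by at least a factor $6$ (Lemma \ref{lowestiraf}), and the geometric fact recorded in Remark \ref{lowkappachico} that for $\kappa$ small enough the diameter of a curve tangent to $C^u_\kappa$ is essentially determined by (and hence identifiable with) its arclength.

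First I would observe that if $\gamma'(t) \in C^u_\kappa(\gamma(t))$ at every $t$ in the domain of $\gamma$, then applying Lemma \ref{lowconosf} pointwise gives $(f\circ\gamma)'(t) = D_{\gamma(t)}f(\gamma'(t)) \in C^u_\kappa(f(\gamma(t)))$, so the image curve $f(\gamma)$ is itself tangent to the unstable cone field at every point of its domain. In particular the geometric comparison of Remark \ref{lowkappachico} applies to both $\gamma$ and $f(\gamma)$.

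Next, from Lemma \ref{lowestiraf} applied pointwise I obtain $\|(f\circ\gamma)'(t)\| \geq 6\|\gamma'(t)\|$ for every $t$; integrating in $t$ gives the arclength bound $\mathrm{length}(f(\gamma)) \geq 6\cdot\mathrm{length}(\gamma)$, with strict inequality if $\gamma$ is nondegenerate, since Lemma \ref{lowestiraf} itself is strict.

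Finally, because both $\gamma$ and $f(\gamma)$ have velocities lying in $C^u_\kappa$, Remark \ref{lowkappachico} lets me replace arclength by diameter on both sides without essential loss, yielding $diam(f(\gamma)) > 6\cdot diam(\gamma)$. The one place I expect to require care is making the identification of length and diameter in Remark \ref{lowkappachico} quantitatively uniform enough that the full factor $6$ is preserved when I translate between the two notions; this is exactly why $\kappa$ is forced to be small there, and it is the only step where the argument is not a direct citation of a prior lemma.
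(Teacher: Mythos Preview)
Your proposal is correct and matches the paper's intent: the corollary is stated with an immediate $\Box$ and no written proof, so it is meant as a direct consequence of Lemma \ref{lowconosf}, Lemma \ref{lowestiraf}, and Remark \ref{lowkappachico}, precisely as you have laid out. The one caveat you flag---that the length/diameter identification must be tight enough not to erode the factor $6$---is exactly the content of Remark \ref{lowkappachico} (where $\kappa$ is chosen small), and you have identified it correctly.
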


\begin{clly}\label{lowconorobusto}
  There exists a $C^1$ neighborhood $\mathcal{U}_f$ of $f$ such that all $g$ in $\mathcal{U}_f$ admit an unstable cone of parameter $\kappa$, index 2 and vertex $p$ at every $p \in \T^3$ for which Corollary \ref{lowdiametro} holds. \hfill $\Box$
\end{clly}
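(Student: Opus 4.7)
\emph{Proof proposal.} The plan is to exploit the fact that both cone invariance and the expansion factor $6$ were obtained with strict inequalities in Lemmas \ref{lowconosf} and \ref{lowestiraf}, and that both conditions are open in the $C^1$ topology.

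First I would revisit the calculation ending the proof of Lemma \ref{lowconosf}. Tracking the estimate carefully, the ratio $\|(w_2,w_3)\|/|w_1|$ is in fact bounded above by a quantity of the form $\kappa/\lambda$ plus lower-order terms coming from $\|u'\|\cdot\|\hat{f}_2-\mathrm{Id}\|$ and $\|D\hat{f}_2-\mathrm{Id}\|$, all of which were tuned in Remark \ref{lowrkdinf} to be small compared to $\kappa$. Hence there exists $\kappa' < \kappa$, independent of the point $p \in \T^3$, such that $\overline{D_p f\bigl(C^u_\kappa(p)\bigr)}\setminus\{0\} \subset C^u_{\kappa'}(f(p))$. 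Analogously, in Lemma \ref{lowestiraf} the factor was $\lambda/6 > 20/6$, so there is some $\mu > 6$ for which $\|D_p f(v)\| \geq \mu\|v\|$ holds uniformly in $p$ and in $v \in C^u_\kappa(p)$.

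Next, I would use $C^1$-continuity of the differential together with compactness of $\T^3$ and of the unit sphere in each tangent space. The map $p \mapsto D_p f$ is continuous, and both the cone $C^u_\kappa$ and the set of pairs $(p,v)$ with $v$ of unit norm in $C^u_\kappa(p)$ are compact. Therefore, given the uniform gaps $\kappa - \kappa' > 0$ and $\mu - 6 > 0$, there exists $\delta > 0$ such that whenever $\|Dg - Df\|_{C^0} < \delta$ one has simultaneously
\begin{itemize}
  \item $\overline{D_p g\bigl(C^u_\kappa(p)\bigr)}\setminus\{0\} \subset C^u_\kappa(g(p))$ for every $p \in \T^3$,
  \item $\|D_p g(v)\| > 6\|v\|$ for every $p \in \T^3$ and every $v \in C^u_\kappa(p)$.
\end{itemize}
Setting $\mathcal{U}_f := \{\,g \in C^1(\T^3,\T^3) : \|g - f\|_{C^1} < \delta\,\}$ thus yields the claimed open neighborhood: every $g \in \mathcal{U}_f$ admits the same cone field of parameter $\kappa$ and index $2$, and the argument of Corollary \ref{lowdiametro} (which uses only cone invariance together with the $>6$ expansion plus the remark allowing identification of inradius and diameter of curves tangent to the cone) transfers verbatim to $g$.

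The main point, and really the only subtle one, is to make sure that both slacks $\kappa - \kappa'$ and $\mu - 6$ are \emph{uniform in $p$}; this is guaranteed by the fact that the bounds in the previous two lemmas depend only on $\lambda$, $\|u'\|$, $\|\mathrm{Id}-\hat{f}_2\|$ and $\|D\hat{f}_2\|$, all of which were fixed during the construction. No further perturbation estimates are required, so the proof is essentially a packaging of openness of cone conditions in the $C^1$ topology.
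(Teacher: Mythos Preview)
Your argument is correct and is precisely the standard robustness-of-cones reasoning one would expect here. Note that the paper gives no proof at all for this corollary (it is stated with a bare $\Box$), treating it as an immediate consequence of the strict inequalities in Lemmas \ref{lowconosf} and \ref{lowestiraf}; your proposal simply makes explicit the uniform slack and the compactness/$C^1$-openness packaging that the paper leaves to the reader.
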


We highlight now some of the other relevant dynamical features the map $f$ possesses. All of them are straightforward to check:

\begin{rk}\label{lowproto} \phantom{.}
\begin{enumerate}
  \item  For all $i$, $0 \leq i \leq 4$,  $f(K_i \times \T^{2}) = \mathbb{T}^3$. This yields that $K \times \mathbb{T}^2$ gives rise to a protoblender in $\mathbb{T}^3$.
      \item For all $i$, $0 \leq i \leq 4$, $\left(\frac{2i}{\lambda -1},1+\frac{2i}{\lambda -1},1+\frac{2i}{\lambda -1}\right)$ is a saddle fixed point.
      \item  For all $i$, $0 \leq i \leq 4$, $\left\{\frac{2i}{\lambda -1}\right\} \times \mathbb{T}^2$ is a normally hyperbolic submanifold.
  \item The local unstable manifold at $z:=(0,1,1)$ is $W^u_{loc}(z)=(-r,r)\times\{(1,1)\}$.
  \item The local stable manifold at $z$ is $W^s_{loc}(z)=\{0\}\times (1-r,1+r)^2$.

\end{enumerate}
\end{rk}

We prove now that both the local stable and local unstable manifolds at $z$ are dense in $\T^3$. Since they are transversal, the {\it Inclination Lemma} yields $C^1$ transitivity for $f$. Afterwards, we show that the proof is robust, so $f$ will also be $C^1$ robustly transitive.

\begin{lema}\label{lowumd}
  The local unstable manifold $W^u_{loc}(z)$ is dense in $\mathbb{T}^3$.
\end{lema}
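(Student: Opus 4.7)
My plan is to prove that the forward orbit $\bigcup_{n\geq 0} f^n(W^u_{loc}(z))$ is dense in $\mathbb{T}^3$, which is the natural reading of the statement since $W^u_{loc}(z)$ is itself a short arc. The structural observation driving the argument will be that on each strip $K_i\times\mathbb{T}^2$ the map $f$ coincides with the product $\hat f(x,y,z)=(\lambda x,f_i(y,z))$, where I write $f_i=g_i$ for $i\in\{0,1,2\}$ and $f_i=m_i$ for $i\in\{3,4\}$. Consequently, $f$ sends any horizontal arc $K_i\times\{p\}$ to $(\lambda K_i)\times\{f_i(p)\}$, a horizontal curve whose $x$-range wraps entirely around $S^1$ because $\lambda r>1$. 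This single identity captures the two ingredients: the first coordinate provides full wrap-around, while the IFS $\mathcal F=\{g_0,g_1,g_2,m_3,m_4\}$ acts exactly on the fibre value $p$.

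Using that $z=(0,1,1)$ is fixed (so $f_0(1,1)=(1,1)$) and that $W^u_{loc}(z)=K_0\times\{(1,1)\}$ is horizontal, I would introduce the following bookkeeping. Call a \emph{strand of level $n$} any connected component of $f^n(W^u_{loc}(z))\cap(K_j\times\mathbb{T}^2)$ of the product form $K_j\times\{p\}$. An induction on $n$ using the structural observation produces, for every address $(a_0,a_1,\ldots,a_n)\in\{0,1,2,3,4\}^{n+1}$ with $a_0=0$, a level-$n$ strand
\[
\sigma^{(n)}=K_{a_n}\times\{p_n\},\qquad p_n:=f_{a_{n-1}}\circ\cdots\circ f_{a_0}(1,1),
\]
with the convention $p_0=(1,1)$. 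The inductive step is immediate: given $\sigma^{(n-1)}=K_{a_{n-1}}\times\{p_{n-1}\}$, the image $f(\sigma^{(n-1)})=(\lambda K_{a_{n-1}})\times\{f_{a_{n-1}}(p_{n-1})\}$ wraps $S^1$ and so meets $K_{a_n}\times\mathbb{T}^2$ for every choice of $a_n$, the intersection being exactly $K_{a_n}\times\{p_n\}$.

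To close the proof I would fix any nonempty open $V\subset\mathbb{T}^3$ together with a product box $V_1\times V_{23}\subset V$. By the $C^1$ robust minimality of $\mathrm{IFS}(\mathcal F)$ (Corollary \ref{cllySRM}) the forward $\mathcal F$-orbit of $(1,1)$ is dense in $\mathbb{T}^2$, hence there exist $n\geq 1$ and an address $(a_0=0,a_1,\ldots,a_n)$ such that
\[
f_{a_n}\circ f_{a_{n-1}}\circ\cdots\circ f_{a_0}(1,1)\in V_{23}.
\]
Taking the level-$n$ strand $\sigma^{(n)}=K_{a_n}\times\{p_n\}$ produced by the induction and applying $f$ once more, since $\sigma^{(n)}\subset K_{a_n}\times\mathbb{T}^2$ we obtain
\[
f(\sigma^{(n)})=(\lambda K_{a_n})\times\{f_{a_n}(p_n)\},
\]
a horizontal arc whose $x$-projection exhausts $S^1$ and whose $(y,z)$-value lies in $V_{23}$. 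Therefore $f^{n+1}(W^u_{loc}(z))\supset f(\sigma^{(n)})$ meets $V_1\times V_{23}\subset V$, and density follows.

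The main delicate point, and where the argument could have broken, is the possibility that $V_1$ avoids every strip $K_i$: no strand (whose $x$-projection sits in a single $K_i$) would then enter $V_1$. This is sidestepped by doing one extra iterate at the end, replacing a level-$(n+1)$ strand by the full image curve $f(\sigma^{(n)})$, whose $x$-projection covers all of $S^1$. Aside from this, the proof is pure bookkeeping: the symbolic address is chosen freely using the $x$-wraparound, while IFS minimality steers the $(y,z)$-coordinate into any prescribed open set, with no cone-tilt error because each strand is exactly horizontal by construction.
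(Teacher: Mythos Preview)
Your proof is correct and follows essentially the same route as the paper's: one iterate of the horizontal arc $W^u_{loc}(z)$ already wraps the $S^1$ factor (since $\lambda r>1$), after which the minimal IFS on the fibre steers the $(y,z)$-coordinate into any prescribed open set, and one last wrap-around hits the required $V_1$. The paper's version is terser and restricts the address alphabet to $\{3,4\}$ (using only the robustly minimal pair $\mathcal F_1=\{m_3,m_4\}$), whereas you allow all five symbols and invoke Corollary~\ref{cllySRM}; your explicit strand/address bookkeeping and the remark about the final extra iterate make the dependence on $\lambda r>1$ cleaner than the paper's somewhat elliptic preimage notation $\lambda^{-m}a'\in K_3$, but the underlying mechanism is identical.
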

\begin{proof} Let $V=V_1 \times V_2$ be any open set in $\mathbb{T}^3=S^1 \times \mathbb{T}^2$. We show that there exists a point in the unstable local manifold $W^u_{loc}(z)$ with a forward iterate in $V$.\\
Let $f$ be $f(x,y,z)=(\lambda x,f_2(x,y,z))$. Since $f_1(x)=\lambda x$ and $r\lambda >1$, $f(W^u_{loc}(z))\supset [-1,1] \times \{(1,1)\}$. In turn, $f(W^u_{loc}(z)) \cap K_3 \times \mathbb{T}^2 \neq \emptyset$.\\ Then, since $\mathcal{F}_1$ is a robustly minimal set, there exists $m \in \mathbb{N}$ such that $\langle \mathcal{F}_1 \rangle^m (1,1) \in V_2$. Let $a' \in V_1$ arbitrary and $\lambda^{-m}a' \in K_3$ (this is possible after item 1 at Remark \ref{lowproto}). In turn, there exists $p=(\lambda^{-m-1}a',1,1)$ in $W^u_{loc}(z)$ such that $f^{1+n}(p)  \in V$. \end{proof}

\begin{lema}\label{lowsmd}
  The local stable manifold $W^s_{loc}(z)$ is dense in $\T^3$.
\end{lema}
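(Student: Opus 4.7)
The plan is to mirror the argument of Lemma \ref{lowumd} in the backward direction: given any open $V=V_1\times V_2\subset S^1\times\T^2$, I will produce $N\in\N$ and $p\in V$ with $f^N(p)\in W^s_{loc}(z)=\{0\}\times(1-r,1+r)^2$. This exhibits $V\cap f^{-N}(W^s_{loc}(z))\neq\emptyset$ for every open $V$, which is the meaning of ``$W^s_{loc}(z)$ is dense'' used in Lemma \ref{lowumd}.

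Fix $(y_0,z_0)\in V_2$. Since $\mathcal F_1=\{m_3,m_4\}$ generates a $C^1$ robustly minimal IFS on $\T^2$ by Theorem \ref{HNT}, there exist indices $i_0,\dots,i_{m-1}\in\{3,4\}$ such that $\phi:=m_{i_{m-1}}\circ\cdots\circ m_{i_0}$ sends $(y_0,z_0)$ into $(1-r,1+r)^2$. I then look for $x^*\in V_1$ and $N\ge m$ such that the $A$-orbit of $x^*$ realizes the symbolic schedule
\begin{itemize}
\item[(a)] $A^k(x^*)\in K_{i_k}$ for $0\le k<m$, so $f$ applies $m_{i_k}$ at step $k$ (by item 4 of Remark \ref{lowrkdinf});
\item[(b)] $A^k(x^*)\in S^1\setminus\tilde K$ for $m\le k<N$, so $f$ coincides with $A$ along the orbit, freezing the last two coordinates;
\item[(c)] $A^N(x^*)=0$.
\end{itemize}
Letting $p=(x^*,y_0,z_0)\in V$, Equation (\ref{lowmainmap}) then yields inductively $f^N(p)=(0,\phi(y_0,z_0))\in W^s_{loc}(z)$, as desired.

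Existence of such $x^*$ is a symbolic-dynamics argument for $A|_{S^1}\colon x\mapsto\lambda x$, viewed as a full $\lambda$-shift over the partition $\{I_j=[2j/\lambda,2(j+1)/\lambda)\}_{j=0}^{\lambda-1}$: each $K_i$ contains at least one full cell $I_j$ because $r\lambda>1$, and $S^1\setminus\tilde K$ contains most cells because $|\tilde K|=20r\ll 2$. Moreover, the condition $A^N(x^*)=0$ is equivalent to $x^*$ admitting a terminating $\lambda$-adic expansion of length $N$, the $k$-th digit of which is exactly the symbol visited at step $k$. Hence (a)--(c) become digit constraints that are met by exponentially many sequences, and the residual freedom ensures $x^*\in V_1$ for $N$ large enough. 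The main obstacle is precisely this combinatorial coordination---simultaneously placing $x^*$ in $V_1$, realizing the prescribed symbolic prefix, keeping the tail outside $\tilde K$, and ending at $0$---resolved by $\lambda$ being very large, which makes both the \emph{IFS-active} and the \emph{outside} symbols abundant enough to meet all four demands.
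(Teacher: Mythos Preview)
Your argument has a genuine gap at the very first step of the schedule. Constraint (a) at $k=0$ forces $x^*\in K_{i_0}$, an interval of length $2r$ located near $\frac{2i_0}{\lambda-1}$. But $V_1$ is an \emph{arbitrary} open subset of $S^1$ and need not meet $K_{i_0}$ (indeed it need not meet $K_3\cup K_4$, nor even $\tilde K$). Your claim that ``residual freedom ensures $x^*\in V_1$ for $N$ large enough'' is backwards: in a $\lambda$-adic expansion the \emph{first} digit fixes the coarse location of $x^*$ (the cell $I_{d_0}$), and subsequent digits only refine the position inside that cell. Freedom in the tail (b) lets you place $x^*$ densely \emph{within} the set cut out by (a), but that set is contained in $K_{i_0}$ regardless of $N$. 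Adding an initial ``waiting period'' outside $\tilde K$ does not save the argument either, since $V_1$ may lie inside some $\tilde K_j$, in which case the very first application of $f$ already moves $(y_0,z_0)$ by a convex combination depending on $x$, destroying the clean IFS bookkeeping.

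The paper's proof sidesteps this coordination problem by working with a \emph{curve} rather than a single point: take a short horizontal segment $\gamma\subset V$ with $\gamma'\equiv\vec e_1\in C^u_\kappa$, and use Corollary~\ref{lowdiametro} to iterate until $\mathrm{diam}(f^{k_0}(\gamma))>2$, so that $f^{k_0}(\gamma)$ projects onto all of $S^1$. Only \emph{after} this stretching step does one select a point of $f^{k_0}(\gamma)$ lying in $K_3\times\T^2$ (or $K_4\times\T^2$) and begin the IFS schedule from whatever second coordinate that point carries. The unstable-cone expansion is thus the missing ingredient that decouples the location of $V_1$ from the requirement of landing in the blending region; your purely symbolic approach tries to impose both constraints on a single point simultaneously, which cannot be done in general.
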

\begin{proof}
Let $V=V_1 \times V_2$ be an open set in $\T^3=S^1 \times \T^2$ and let $W^s_{loc}(z)$ be $\{0\}\times B$ where $B=(1-r,1+r)^2$ is $W_{loc}^s(1,1)$ for $g_0$. Pick any point $p=(p_1,p_2,p_3) \in V_1 \times V_2$ and a well defined 'horizontal' curve $\gamma:(-s,s) \rightarrow V$ such that $\gamma(t)=p+t.\vec{e_1}=(p_1+t, p_2, p_3)$. Since for all $t$, $\gamma'(t)=\vec{e_1}$, it holds that $\gamma$ is a curve whose velocity lies inside the unstable cone field of $f$ at all times. By Corollary \ref{lowdiametro}, there exists $k_0 \in \N$ such that $diam(f^k(\gamma))\geq 6^k.diam(\gamma)>2$ for all $k \geq k_0$ (this shows that all future iterates of $\gamma$ project surjectively to $S^1$) which gives $f^{k_0}(\gamma) \cap \left( K_3 \times \T^{2} \right)\neq \emptyset$. Let $(\lambda^{k_0}.(p_1+t),f_2^{k_0}(p_2,p_3)) \in K_3 \times \mathbb{T}^2$. Then, since IFS$(\mathcal{F}_1)$ is strongly robustly minimal there exists $m \in \mathbb{N}$ such that $\langle \mathcal{F} \rangle^m(f^{k_0}_2(p_2,p_3)) \in B$ in $f_2^{k_0+m}(p_2,p_3)$. In turn, $f^{k_0+m}(\gamma) \supset S^1 \times \{ f_2^{k_0+m}(p_2,p_3)\}$ so $f^{k_0+m}(\gamma) \cap \{0\} \times B \neq \emptyset$. Therefore, it exists a point in $V$ (in $\gamma$) with a forward iterate in $W^s_{loc}(z)$. \end{proof}

\begin{thm}\label{lowfisRT}
  The map $f$ defined by Equation (\ref{lowmainmap}) is robustly transitive.
\end{thm}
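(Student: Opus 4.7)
The plan is to deduce transitivity of $f$ from the two density results (Lemmas \ref{lowumd} and \ref{lowsmd}) together with the transversality of the invariant manifolds at the saddle $z=(0,1,1)$ via the Inclination Lemma, and then argue that every ingredient used in the derivation survives small $C^1$ perturbations.

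For transitivity of $f$ itself I would proceed via Proposition \ref{equi}(2). By Remark \ref{lowproto}(4)--(5) the manifolds $W^u_{loc}(z)$ and $W^s_{loc}(z)$ are transverse at $z$. Given open $U,V \subset \mathbb{T}^3$, Lemma \ref{lowsmd} supplies a point in $U$ with a forward iterate on $W^s_{loc}(z)$, so some iterate of $U$ contains a disk transverse to $W^s_{loc}(z)$ and accumulating on $z$. The $\lambda$-lemma then forces further iterates of this disk to $C^1$-accumulate on $W^u_{loc}(z)$; since Lemma \ref{lowumd} makes $W^u_{loc}(z)$ dense, some iterate meets $V$, proving transitivity.

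For robustness I would take $g$ in a $C^1$ neighborhood $\mathcal{U}_f$ and verify that each ingredient of the argument above survives. The hyperbolic saddle $z$ persists as a nearby saddle $z_g$ with $C^1$-close local invariant manifolds by the implicit function theorem. By Corollary \ref{lowconorobusto} the unstable cone field and the diameter-growth estimate of Corollary \ref{lowdiametro} remain valid for $g$. The submanifolds $\{p_i\}\times\mathbb{T}^{2}$, normally hyperbolic by Remark \ref{lowproto}(3), persist as $N_{i,g}$ via Theorem \ref{NHSP}, together with embeddings $h_{i,g}:\{p_i\}\times\mathbb{T}^2 \to N_{i,g}$ close to the canonical inclusion. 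Conjugating $g|_{N_{i,g}}$ back through $h_{i,g}$ yields a family of diffeomorphisms of $\mathbb{T}^2$ that is $C^1$-close to $\{m_3,m_4,g_0,g_1,g_2\}$; by the $C^1$ robust minimality in Theorem \ref{HNT} and the openness of the hypotheses of Lemma \ref{SRM} this perturbed family still generates a robustly minimal IFS retaining the enlarging property. Finally, the protoblender identity $f(K_i\times\mathbb{T}^2)=\mathbb{T}^3$ of Remark \ref{lowproto}(1) is an open condition in the $C^1$ topology, so it persists for $g$ after shrinking $\mathcal{U}_f$ if necessary.

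With these robust ingredients at hand, the proofs of Lemmas \ref{lowumd} and \ref{lowsmd} transcribe almost verbatim to $g$: the horizontal-curve-plus-expansion argument still pushes any curve tangent to the cone field across a protoblender slice, and the robustly minimal IFS on the perturbed submanifolds steers the $\mathbb{T}^2$-coordinates to any prescribed open set. Hence $W^u_{loc}(z_g)$ and $W^s_{loc}(z_g)$ are dense in $\mathbb{T}^3$, and the Inclination Lemma argument above delivers transitivity of $g$. The delicate point I expect to be the real obstacle is the transfer of the IFS construction through the embeddings $h_{i,g}$: one must make sure that the $C^1$-perturbation incurred by conjugation is smaller than the robustness margins provided by Theorem \ref{HNT} and Lemma \ref{SRM}, which is why these two results are invoked in their $C^1$-robust form rather than merely for the unperturbed generators.
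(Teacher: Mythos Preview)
Your proposal follows the same architecture as the paper's proof: persistence of the saddle, of the unstable cone field (Corollary \ref{lowconorobusto}), and of the normally hyperbolic leaves (Theorem \ref{NHSP}), followed by re-running Lemmas \ref{lowumd} and \ref{lowsmd} for the perturbation and concluding via the Inclination Lemma.

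The one place where the paper is more explicit than you is precisely the point you flag as delicate. After perturbation the five invariant $2$-tori $\mathcal{H}_ig$ are \emph{distinct} submanifolds, so the restrictions $g|_{\mathcal{H}_ig}$ do not literally assemble into an IFS acting on a single copy of $\T^2$: as an unstable curve crosses the various slices its $\T^2$-coordinate drifts by $O(\varepsilon)$, and conjugating each $g|_{\mathcal{H}_ig}$ back through $h_{i,g}$ does not remove this drift (it only identifies each leaf separately with $\T^2$). For this reason the paper does \emph{not} claim that the perturbed family is minimal; it only uses that $\mathcal{G}_2=\{g|_{\mathcal{H}_3g},g|_{\mathcal{H}_4g}\}$ gives $\varepsilon$-dense orbits in the second factor. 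The slack is absorbed by first invoking the enlarging family $\mathcal{G}_1=\{g|_{\mathcal{H}_ig}:0\le i\le 2\}$, inherited from Lemma \ref{SRM}, to blow the preimage of the target $V_2$ up to a set of inradius at least $\sqrt{3}/3\gg\varepsilon$, which an $\varepsilon$-dense orbit certainly hits. So Lemma \ref{SRM} is not a redundant companion to robust minimality; it is the mechanism that soaks up exactly the conjugation error you anticipated. Your outline becomes a complete proof once you replace ``robustly minimal IFS'' for $g$ by ``$\varepsilon$-dense orbits for $\mathcal{G}_2$ landing in a target pre-enlarged by $\mathcal{G}_1$''.
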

\begin{proof}
Let $V=V_1 \times V_2$ be any open set in $S^1 \times \T^2$.
Start taking $\varepsilon >0$ smaller than $\frac{1}{100}$ and shrink it if needed such that all maps in a $C^1$ neighborhood $\mathcal{U}_f$ of $f$ of radi $\varepsilon$ satisfy Corollary \ref{lowconorobusto}. Continue noticing that, after Remark \ref{lowproto}, $f$ admits at least five normally hyperbolic submanifolds at five hyperbolic points. This implies that, after Theorem \ref{NHSP}, all $g \in \mathcal{U}_f$ admit at least five hyperbolic points where five normally hyperbolic invariant submanifolds of $\T^3$ lie (let's name them as $\mathcal{H}_ig$). Since the restriction of $f$ to its invariant submanifolds is at distance less than $\varepsilon$ from the restrictions $g_{|\mathcal{H}_ig}$, it is straightforward seeing that the family $\mathcal{G}_1:=\left\{ g_{|\mathcal{H}_ig}, 0 \leq i \leq 2 \right\}$ satisfies $ir\left( \langle \mathcal{G} \rangle^{-}(V_2) \right) \geq \frac{\sqrt{3}}{3}$, property inherited from the family $\mathcal{F}_1$. As well, $\mathcal{G}_2=\left\{ g_{|\mathcal{H}_ig}, 3 \leq i \leq 4\right\}$ is a set for which every point in $\T^2$ displays an $\varepsilon$-dense orbit. Combine both facts to get transitivity for $g$ in the following way: Let $(0',1',1')$ be the hyperbolic continuation of $(0,1,1)$. Since there exists $m \in \N$ such that $ir(\langle \mathcal{G}_1 \rangle^{-m})(V_2) > \frac{\sqrt{3}}{3}$, $g$ admits unstable cones which yield for some $j \in \N, g^j\left(W^u_{loc}(0',1',1')\right) \cap \mathcal{H}_3g \neq \emptyset$, take $(a,b,c) \in g^j\left(W^u_{loc}(0',1',1')\right) \cap \mathcal{H}_3g$ and find $d \in \N$ such that $\langle \mathcal{G}_2 \rangle^d(b,c) \in g^{-m}(V)$ (this is due to $\varepsilon$ density under the restricted action of $\mathcal{G}_2$). In turn, it is satisfied that $g^{j+d+m}\left(W_{loc}^u(0',1',1')\right) \cap V \neq \emptyset$. This shows that Lemma \ref{lowumd} holds robustly. \\
For Lemma \ref{lowsmd}, follow its proof to see that the curve $\gamma$ satisfies $g^{k_0 +1}(\gamma)\cap\mathcal{H}_3g \neq \emptyset$ and continues to project surjectively onto $S^1$ for all $k \geq k_0+1$. Choose any point $(a,b,c) \in g^{k_0 +1}(\gamma)\cap\mathcal{H}_3g$. Since IFS$(\mathcal{G}_2)$ gives $\varepsilon$-density on the second factor, for some $d \in \N$, $\langle \mathcal{G}_2 \rangle^d(b,c) \in g^{-m-1}(\{0\}\times V_2)$, so $g^{k_0+d+m+2}(\gamma) \cap W^s_{loc}(0',1') \neq \emptyset$ which implies robustness of Lemma \ref{lowsmd}. \end{proof}

\subsection{A singular endomorphism $F$ of $\T^3$.}
 Now that we have defined a robustly transitive endomorphism $f$ given by a blending region contained in $K \times \T^2$, we procceed to the second step of the construction by (robustly) artificially introducing critical points in the complement of $\tilde{K} \times \T^2$. The technique used to introduce the critical points is inspired by the construction carried on in \cite[Section 2.2]{ilp}. Once the surgery over $f$ is performed, a map $F$ satisfying the thesis of Theorem \ref{lowmain} arises.

\subsubsection{Construction of $F$}\label{lowconstructionofF}\phantom{.}\\
{\textit{Sketch of the construction:} We choose a point not in $\tilde{K} \times \T^2$ and set a ball centered at this point, inside the complement of $\tilde{K} \times \T^2$. By means of standard surgical procedures, we perturb $f$ to introduce a set of critical points inside the ball with the additional property that the resulting critical set is persistent. Since the surgery does not affect the blending region in $K \times \T^2$, the robust transitivity of the map $f$ defined by Equation (\ref{lowmainmap}) is inherited by the new map. We name the new map as $F$, and it satisfies the claim at Theorem \ref{lowmain}. \\

\begin{figure}[ht]
\begin{center}
\subfigure[]{\includegraphics[scale=1]{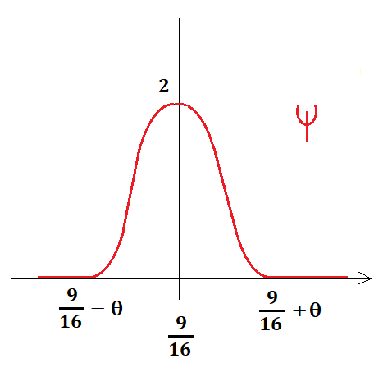}}
\subfigure[]{\includegraphics[scale=0.5]{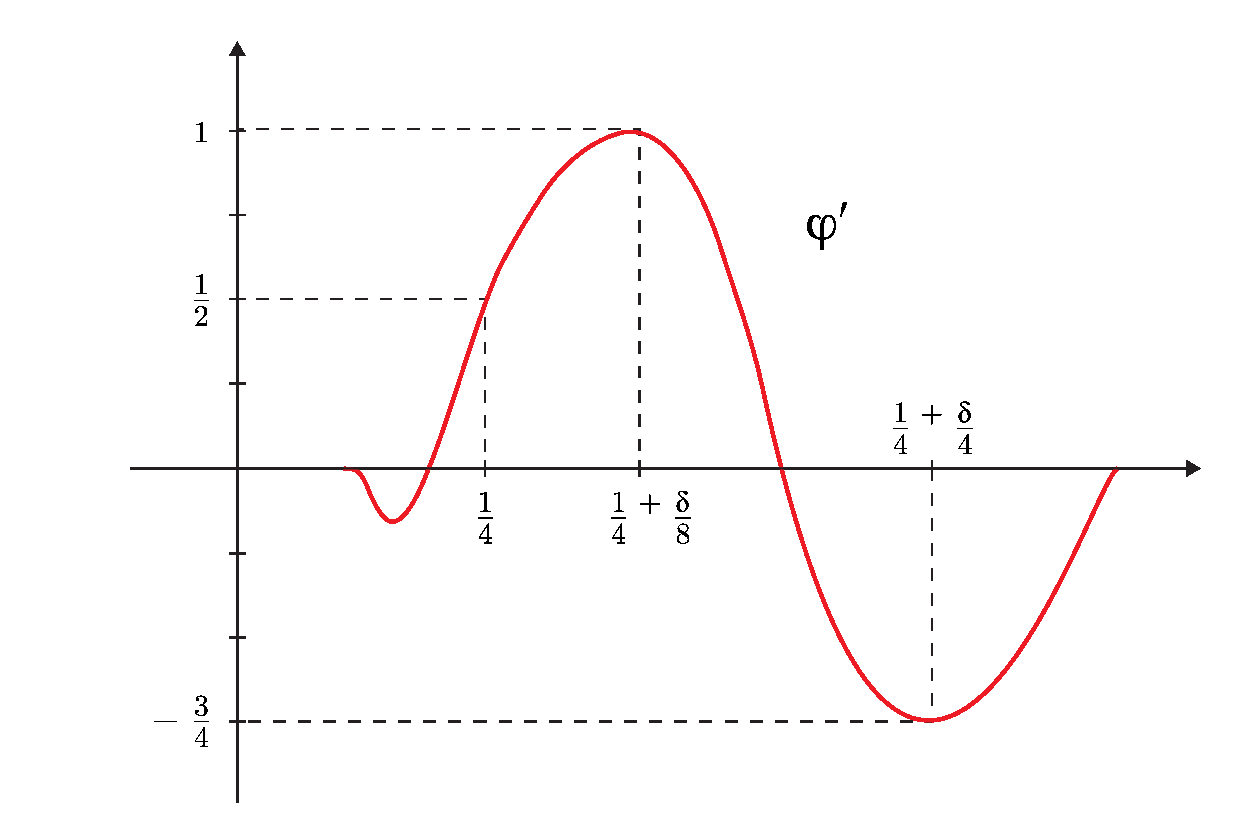}}
\caption{Graphs of $\psi$ and $\varphi'$ (following \cite{mo})}\label{lowfigura11}
\end{center}
\end{figure}

Let $p=\left(\frac{3}{4},0,\frac{1}{4} \right)\in (\tilde{K} \times \mathbb{T}^{3})^c$. Our goal is to define a ball of center $p$ to perform a perturbation in order to obtain the map $F$ we seek. To achieve this goal we need to fix a series of technical parameters; the choice to set all of them at the same time and at the beginning of the construction is in expectance of avoiding darkness and of that it will be clear how they depend on each other.\\

Start with $l>0$ satisfying that the ball $B_{(p,l)} \cap \left( \tilde{K} \times \T^{2} \right)= \emptyset$, this is possible since $p \notin \left( \tilde{K} \times \T^{2} \right)$. Fix a second parameter $\theta$ such that  $0< \theta  < \frac{l}{2}$  and define a smooth ($C^{\infty}$) function $\psi :\R\to\R$ with a unique critical point at $\frac{9}{16}$, with $\psi \left(\frac{9}{16}\right)=2$ and $\psi (x)=0$ for all $ x$ in the complement of $\left(\frac{9}{16}-\theta ,\frac{9}{16}+\theta  \right)$;  and an axis of symmetry in the line $x=\frac{9}{16}$ as shown in Figure \ref{lowfigura11} (a).

Set a last parameter $\delta $, with  $0<\delta < 2\theta $ verifying the following condition: since the derivative of $\psi$ is bounded once $\theta$ has been fixed, name the bound as $ m_\psi := m_\psi(\theta) = max  \{ | \psi'(x) |, x \in \R \} $ and impose on $\delta$ that $6.(1+\kappa).\delta.m_\psi.<\kappa$.  \\

Having fixed $\delta$, consider another smooth function $\varphi:\R\to\R$ such that:

\begin{itemize}
\item $\varphi' $ is as in Figure \ref{lowfigura11} (b).
\item $\frac{-3}{4}\leq \varphi'(x)\leq 1$ for all $x \in \R$. This gives $| \varphi'(x)| \leq 1$ for all $x \in \R$.
\item $\varphi' (x)=0$ for all
$ x \notin \left[\frac{1}{4}-\frac{\delta}{4},\frac{1}{4}+\frac{3\delta}{4} \right]$.
\item $\varphi' \left(\frac{1}{4}\right)=\frac{1}{2}$, $\varphi' \left(\frac{1}{4} +\frac{\delta}{8} \right)=1$, $\varphi' \left(\frac{1}{4} +\frac{\delta}{4}  \right)=-\frac{3}{4}$, $\varphi \left(\frac{1}{4}\right)=0$.

\end{itemize}
\begin{rk}
  $max \{ |\varphi (x)|: \ x \in \R \}\leq \delta.$
\end{rk}

We are now in condition to define a perturbation of $f$ in the direction of the last canonical vector $(0,0,1)$ that depends on $l$,$\theta$ and $\delta$ which by simplicity we call only $F$ and is defined as \begin{equation}\label{lowmapaefe}
           F_{l,\theta, \delta}:\T^3 \to \T^3 / F(x,y,z)= \left\{\begin{array}{c}
           \begin{aligned}
f(x,y,z) & \mbox{  if $(x,y,z) \notin B_{(p,l)} $} \\
(\lambda x, y , z - \varphi(z).\psi\left( x^2 +y^2 \right) &\mbox{  if $(x,y,z) \in B_{(p,l)} $}
\end{aligned}
\end{array}\right.
         \end{equation}

\begin{rk}\label{lowrkF}.

\begin{enumerate}
            \item For all $x \notin B_{\left(p,\frac{3\delta}{4}\right)}$ it holds that $F(x)=f(x)$.
            \item For all $x \notin \tilde{K}\times \T^{2}$ it holds that $f(x)=A(x)$.
          \end{enumerate}

\end{rk}
To make the reading easier we will denote $\varphi(z)$ as $\varphi$ and $\psi \left( x^2+y^2 \right)$ as $\psi$.

\begin{lema}\label{lowFps}

  The endomorphism $F$ defined by Equation (\ref{lowmapaefe}) is persistently singular.
\end{lema}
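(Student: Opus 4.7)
The plan is to exhibit the critical set explicitly by computing $\det DF$ inside the ball $B_{(p,l)}$ where the perturbation is non-trivial, show that it changes sign, and then invoke continuity of the determinant under $C^1$ perturbations for persistence.

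First I would compute the differential. Inside $B_{(p,l)}$, writing $F=(F_1,F_2,F_3)$, one has $F_1(x,y,z)=\lambda x$, $F_2(x,y,z)=y$, and $F_3(x,y,z)=z-\varphi(z)\psi(x^2+y^2)$, so
\begin{equation*}
D_{(x,y,z)}F=\begin{pmatrix} \lambda & 0 & 0 \\ 0 & 1 & 0 \\ -2x\varphi(z)\psi'(x^2+y^2) & -2y\varphi(z)\psi'(x^2+y^2) & 1-\varphi'(z)\psi(x^2+y^2) \end{pmatrix},
\end{equation*}
whose determinant equals $\lambda\bigl(1-\varphi'(z)\psi(x^2+y^2)\bigr)$. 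The critical set of $F$ inside the ball is therefore the level set $\{\varphi'(z)\psi(x^2+y^2)=1\}$.

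Next I would produce explicit points realising values of $\varphi'\psi$ on both sides of $1$. At the center-like point $q_-=\bigl(\tfrac34,0,\tfrac14+\tfrac{\delta}{8}\bigr)$, we have $\psi(x^2+y^2)=\psi(9/16)=2$ and $\varphi'(z)=1$, so $\det D_{q_-}F=-\lambda<0$. At any point $q_+\in B_{(p,l)}$ with $\varphi'(z)=0$ or $\psi(x^2+y^2)=0$ (e.g.\ $q_+=p$ itself, since $\varphi'(1/4)=1/2$ but $\psi(9/16)=2$ gives $1$ — let me pick instead $q_+=\bigl(\tfrac34,0,\tfrac14-\tfrac{\delta}{4}\bigr)$ where $\varphi'=0$), one has $\det D_{q_+}F=\lambda>0$. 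Both $q_\pm$ lie in $B_{(p,l)}$ because $\delta<2\theta<l$, so the $z$-shifts are bounded by $l$. Joining $q_+$ to $q_-$ by a straight segment inside $B_{(p,l)}$ and applying the intermediate value theorem to the continuous function $(x,y,z)\mapsto\det D_{(x,y,z)}F$ gives at least one zero of $\det DF$ in the ball, so $F$ is singular.

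For persistence, let $\mathcal{U}_F$ be a $C^1$-neighborhood of $F$ small enough that for every $G\in\mathcal{U}_F$ one has $\|D_qG-D_qF\|<\tfrac{\lambda}{2}$ uniformly in $q\in\mathbb{T}^3$. Then the function $q\mapsto\det D_qG$ is $C^0$-close to $q\mapsto\det D_qF$ (the determinant is a polynomial, hence Lipschitz on bounded sets); shrinking $\mathcal{U}_F$ further we can arrange that $\det D_{q_+}G>0$ and $\det D_{q_-}G<0$. Applying the intermediate value theorem again along the segment joining $q_+$ to $q_-$, one obtains a zero of $\det DG$ inside $B_{(p,l)}$, so $G$ has a non-empty critical set.

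The routine part is the sign computation and the IVT; no real obstacle arises, the only thing to be slightly careful about is checking that the sample points $q_\pm$ genuinely sit inside $B_{(p,l)}$, which is immediate from the choices $0<\delta<2\theta<l$ made in the construction.
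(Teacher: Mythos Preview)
Your proof is correct and follows essentially the same route as the paper: compute $D_{(x,y,z)}F$ inside $B_{(p,l)}$, obtain $\det D_{(x,y,z)}F=\lambda(1-\varphi'\psi)$, and exhibit two points with determinants of opposite sign so that the sign change (hence a zero) persists under small $C^1$ perturbations. The paper picks $q_1=(3/4,0,1/4+\delta/4)$ with $\det=\tfrac{5\lambda}{2}$ and $q_2=(3/4,0,1/4+\delta/8)$ with $\det=-\lambda$, and also notes directly that $p$ itself is critical since $\varphi'(1/4)\psi(9/16)=1$; your choice of $q_+=(3/4,0,1/4-\delta/4)$ with $\varphi'=0$ works just as well, and your IVT phrasing makes the persistence step a touch more explicit than the paper's.
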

\begin{proof} Start computing the differential $D_{(x,y,z)} F$ with $(x,y,z) \in B_{(p,l)} $.

\begin{equation}\label{lowecuacion1}
D_xF =
\left( \begin{array}{ccccc}
\lambda & 0 & 0 \\
0 & 1 & 0 \\

-2.x.\varphi.\psi' & -2.y.\varphi.\psi'&  1-\varphi'.\psi \\
\end{array}
\right).
\end{equation}
Since the critical set of $F$ is defined as $S_F=\{x \in \T^{n} /det ( D_xF )=0\}$, Equation (\ref{lowecuacion1}) provides $det(D_xF)=\lambda (1-\varphi'.\psi) $. In turn, $S_F=\{x \in \T^{n} /\quad 1-\varphi'.\psi =0\rbrace.$\\
Notice that $S_F$ is nonempty since $p=\left(\frac{3}{4},0,\frac{1}{4}\right) \in S_F$. To prove that $S_F$ is persistent, consider the points $q_1=\left(\frac{3}{4},0, \frac{1}{4} + \frac{\delta}{4}\right)$ and $q_2=\left(\frac{3}{4},0, \frac{1}{4} + \frac{\delta}{8}\right)$ both in $B_{(p,l)}$. Evaluate determinants to obtain \textit{det}$(D_{q_1}F)= \frac{5\lambda}{2}$ and  \textit{det}$(D_{q_2}F)= -\lambda$. Therefore, for a neighborhood $\mathcal{U}_F \in C^{1}$ of radi $\frac{1}{2}$, every $ g \in \mathcal{U}_F$ satisfies $S_g\neq\emptyset$. \end{proof}

\subsubsection{Dynamics of $F$.}
We turn now to the last part of the article where we show that $F$ is $C^1$ robustly transitive. To prove it, observe first that after Remark \ref{lowrkF}, Lemma \ref{lowumd} holds for $F$ automatically. If we prove that Lemma \ref{lowsmd} also holds for $F$, then we can apply the same argument of Theorem \ref{lowfisRT} to $F$ to have the result. Notice that for Lemma \ref{lowsmd} to hold for $F$ we only need to show that $F$ admits an unstable cone $C_\kappa^u(x)$ at every point $x \in B_{(p,l)}$ which satisfies that for all curves $\gamma$ with $\gamma' \in C_\kappa^u(x)$ then $diam(F(\gamma))>6.diam(\gamma)$.

\begin{lema}\label{lowconoF1}
  For all $q \in B_{(p,l)}$ it holds that $C^{u}_{\kappa} (q)$ is an unstable cone for $F$.
\end{lema}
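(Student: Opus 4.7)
The plan is to mirror the computation carried out in Lemma \ref{lowconosf}: pick an arbitrary vector $v=(v_1,v_2,v_3) \in C^u_\kappa(q)$ at a point $q=(x,y,z) \in B_{(p,l)}$, set $w=(w_1,w_2,w_3):=D_qF(v)$, and verify directly that $\|(w_2,w_3)\|/|w_1|<\kappa$. From Equation (\ref{lowecuacion1}) one immediately reads off
\begin{equation*}
w_1 = \lambda v_1, \qquad w_2 = v_2, \qquad w_3 = -2x\varphi\psi'\,v_1 - 2y\varphi\psi'\,v_2 + (1-\varphi'\psi)\,v_3,
\end{equation*}
so the simplification is that the second coordinate is untouched, and only $w_3$ needs to be controlled carefully.

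The second step is to collect the bounds already available in the paper: $|\varphi|\leq\delta$ (Remark immediately before Equation (\ref{lowmapaefe})); $|\varphi'|\leq 1$ and $|\psi|\leq 2$ by construction; $|\psi'|\leq m_\psi$ by definition of $m_\psi$; $|x|,|y|\leq 1$ on $\T^3$ with the chosen representatives; $|v_2|,|v_3|\leq\|(v_2,v_3)\|<\kappa|v_1|$ from $v\in C^u_\kappa(q)$; the design inequality $6(1+\kappa)\delta\,m_\psi<\kappa$ imposed on $\delta$; and $\lambda>20$.

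The third step is the estimate itself. Using the triangle inequality,
\begin{equation*}
|w_3| \leq 2\delta\,m_\psi|v_1| + 2\delta\,m_\psi|v_2| + (1+|\varphi'\psi|)|v_3| \leq \bigl[2\delta\,m_\psi(1+\kappa) + 3\kappa\bigr]|v_1|.
\end{equation*}
Combining this with $|w_2|\leq\kappa|v_1|$ and the norm inequality $\|(w_2,w_3)\|\leq|w_2|+|w_3|$ yields
\begin{equation*}
\frac{\|(w_2,w_3)\|}{|w_1|} \leq \frac{4\kappa + 2\delta\,m_\psi(1+\kappa)}{\lambda}.
\end{equation*}
Plugging in $2\delta\,m_\psi(1+\kappa)<\kappa/3$ (from the design inequality) and $\lambda>20$ gives the bound $(4\kappa+\kappa/3)/\lambda < 13\kappa/60 < \kappa$, which is the cone invariance condition.

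I do not expect any real obstacle: the statement is a routine linear algebra check and every constant has already been tuned precisely so that the inequality closes. The only potential pitfall is bookkeeping, in particular making sure the designed constraint $6(1+\kappa)\delta\,m_\psi<\kappa$ is used exactly where needed to absorb the $\varphi\psi'$ terms; the factor $\lambda>20$ then provides a comfortable margin since the denominator is huge compared to the numerator. This is in the same spirit as the proof of Lemma \ref{lowconosf}, so the verification can again be left to the reader.
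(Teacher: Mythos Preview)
Your proposal is correct and follows essentially the same route as the paper: compute $D_qF(v)$ from Equation~(\ref{lowecuacion1}), bound $\|(w_2,w_3)\|/|w_1|$ via the triangle inequality using $|\varphi|\leq\delta$, $|\psi'|\leq m_\psi$, $|1-\varphi'\psi|\leq 3$, the cone condition, the design constraint $6(1+\kappa)\delta m_\psi<\kappa$, and $\lambda>20$. The only cosmetic difference is that the paper estimates the mixed term through $\|q\|\cdot\|v\|$ with $\|q\|<3$ and Cauchy--Schwarz, whereas you use the coordinate-wise bounds $|x|,|y|\leq 1$; both choices close the inequality with room to spare.
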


\begin{proof} From Equation (\ref{lowecuacion1}) we have for all $ v=(v_1,v_2,v_3) \in C^{u}_{\kappa} (x):$
$$D_{q}F (v) = (\lambda.v_{1},v_{2},-2.(xv_1+yv_2).\varphi.\psi' + v_3.(1-\varphi'.\psi)). $$

Call $ (u_1,u_2,u_3):=D_{q}F (v)$ and perform. We have:  $$ \frac{\Vert (u_{2},u_{3}) \Vert}{\vert u_{1}\vert}  = \frac{\Vert (v_{2}, -2.\langle (x,y),(v_1,v_2)\rangle.\varphi.\psi' + v_3.(1-\varphi'.\psi)) \Vert}{\vert \lambda.v_{1} \vert} \leq$$ $$ \leq \frac{|v_{2}|}{\vert \lambda.v_{1}\vert} + \frac{2.\Vert q \Vert.\Vert v \Vert.\vert \varphi \vert.\vert \psi' \vert}{\vert \lambda.v_{1} \vert} + \frac{\vert 1-\varphi' .\psi \vert. \vert v_{3}\vert}{\vert \lambda.v_{1} \vert} < \frac{\kappa}{\lambda} + 2.3.\left(\frac{1+\kappa}{\lambda}\right).\delta.m_\psi +\frac{3\kappa}{\lambda} < \kappa. $$ \\ Above, for the first inequality we use triangular and Cauchy-Schwarz; for the second one we use:
\begin{itemize}
\item $ v \in C^{u}_{\kappa} (x)$,
\item $ \Vert q \Vert < 3$,
\item $ \frac{\Vert v \Vert}{|\lambda.v_1|}\leq \frac{|v_1|+\Vert (v_2,v_3) \Vert}{|\lambda.v_1|}\leq \frac{1}{\lambda}+\frac{\kappa}{\lambda}$,
\item $ \vert \varphi \vert \leq \delta $,
\item $ \vert \psi' \vert < m_\psi $,
\item $ \vert 1-\varphi' .\psi \vert \leq 3 $ since $ \frac{-3}{4} \leq \varphi' \leq 1 $ and $ 0 \leq \psi \leq 2$.

\end{itemize}
And for the third one we use the condition $6.(1+\kappa).\delta.m_\psi.<\kappa$ imposed over $\delta$ and that $\lambda > 20$. \end{proof}

\begin{lema}\label{lowconoF2}
  For all $q \in B_{(p,l)}$ and all $ v \in C^{u}_{\kappa} (q)$ it holds that $ \Vert D_{q}F (v) \Vert > 6 \Vert v \Vert $.
\end{lema}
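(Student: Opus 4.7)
The plan is to mimic the argument used for Lemma \ref{lowestiraf} (the analogous expansion estimate for $f$), since the crucial feature that drove that proof — the strong expansion $\lambda$ in the first coordinate — is preserved by $F$ on $B_{(p,l)}$. Indeed, reading off the first row of the differential in Equation (\ref{lowecuacion1}), the first coordinate of $D_qF(v)$ is exactly $\lambda v_1$, regardless of the perturbation terms $\varphi$ and $\psi$.

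First I would bound $\|D_qF(v)\|$ from below by the modulus of its first component, $\|D_qF(v)\| \geq |\lambda v_1|$. Next, I would use that $v=(v_1,v_2,v_3) \in C^u_\kappa(q)$ implies $\|(v_2,v_3)\| < \kappa |v_1|$, so $\|v\|^2 = |v_1|^2 + \|(v_2,v_3)\|^2 < (1+\kappa^2)|v_1|^2$. Combining these two inequalities gives
\[
\left( \frac{\|D_qF(v)\|}{6\|v\|} \right)^2 \geq \frac{\lambda^2 |v_1|^2}{36(1+\kappa^2)|v_1|^2} = \frac{\lambda^2}{36(1+\kappa^2)}.
\]
Finally, plugging in $\lambda > 20$ and $0 < \kappa < 3$ yields $\frac{\lambda^2}{36(1+\kappa^2)} > \frac{400}{360} > 1$, which gives the desired inequality $\|D_qF(v)\| > 6\|v\|$.

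There is essentially no obstacle here: the perturbation introducing the critical points only modifies the third row of $D_qF$, leaving the strongly expanding first row untouched. Since $\lambda$ was chosen enormous relative to $\kappa$, the stretching along the first coordinate dwarfs all other contributions, and the cone computation is literally the same as in Lemma \ref{lowestiraf}. The only thing to double-check is that the denominator estimate $\|v\|^2 < (1+\kappa^2)|v_1|^2$ for cone vectors really is all we need, which follows from the definition of $C^u_\kappa$ given earlier in the paper.
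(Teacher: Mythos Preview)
Your proof is correct and follows essentially the same argument as the paper's own proof: both bound $\|D_qF(v)\|$ below by its first component $|\lambda v_1|$ (which is unchanged by the perturbation), then use the cone condition $\|(v_2,v_3)\| < \kappa |v_1|$ together with $\lambda > 20$ and $\kappa < 3$ to obtain $\frac{\lambda^2}{36(1+\kappa^2)} > \frac{400}{360} > 1$. The paper's version is literally the same chain of inequalities as in Lemma \ref{lowestiraf}, just as you anticipated.
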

\begin{proof}
  Let $q \in B_{(p,l)}$ and $ (v_{1},v_{2},v_3) \in C^{u}_{\kappa} (q) \subset T_q \T^3$: $$\left( \frac{\Vert D_{q}F (v_1,v_2,v_3) \Vert}{6.\Vert (v_1,v_2,v_3) \Vert}\right) ^{2} \geq \frac{\lambda.v_{1}^{2}}{36.(v_1^2+||(v_2,v_3)||^2)}   \geq  \frac{\lambda^2}{36.\left(1+\frac{||(v_2,v_3)||^2}{|v_1|^2}\right)} > \frac{400 }{360}>  1. $$
\end{proof}

\begin{lema}\label{lowFrt}
  The map $F$ defined by Equation (\ref{lowmapaefe}) is $C^1$ robustly transitive.
\end{lema}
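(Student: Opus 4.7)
The plan is to mirror the proof of Theorem \ref{lowfisRT} for the perturbed map $F$, verifying that the two density statements underlying transitivity of $f$ (Lemmas \ref{lowumd} and \ref{lowsmd}) survive the surgery inside $B_{(p,l)}$, and then promoting transitivity to robust transitivity via the same persistence argument already used for $f$.

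First I would note that Lemma \ref{lowumd} carries over to $F$ without change, as the remark preceding Lemma \ref{lowconoF1} already indicates: its proof uses only that $A$ stretches $W^u_{loc}(z)$ across $S^1$ in one iterate and that $\mathrm{IFS}(\mathcal{F}_1)$ is robustly minimal on the fiber $K_3 \times \mathbb{T}^2$; both the segment and that fiber sit outside $B_{(p,l)}$, where $F$ coincides with $f$ by Remark \ref{lowrkF}. Next I would reprove Lemma \ref{lowsmd} for $F$. The horizontal curve $\gamma \subset V$ used there may now cross $B_{(p,l)}$ under iteration, but Lemma \ref{lowconosf} outside the ball together with Lemmas \ref{lowconoF1} and \ref{lowconoF2} inside it supply a globally $DF$-invariant cone field $C^u_\kappa$ on $\mathbb{T}^3$ on which $DF$ expands by a uniform factor greater than $6$. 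Hence $\gamma'(t) \in C^u_\kappa$ is preserved under all iterates and $diam(F^k(\gamma)) > 6^k \, diam(\gamma)$, so the original argument of Lemma \ref{lowsmd} applies verbatim: some iterate of $\gamma$ projects surjectively onto $S^1$, meets $K_3 \times \mathbb{T}^2$, and after finitely many further iterates of the fiberwise IFS lands in $W^s_{loc}(z)$. Transversality of $W^u_{loc}(z)$ and $W^s_{loc}(z)$ together with the inclination lemma now gives transitivity of $F$.

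For robust transitivity I would reproduce the argument of Theorem \ref{lowfisRT}. A sufficiently small $C^1$ neighborhood $\mathcal{U}_F$ inherits, by continuity of the differential and the strict inequalities used in Lemmas \ref{lowconoF1} and \ref{lowconoF2}, a common unstable cone field for every $g \in \mathcal{U}_F$ that now covers $B_{(p,l)}$ as well; by Theorem \ref{NHSP} it preserves the five normally hyperbolic invariant submanifolds supporting the fiberwise IFS; and the robust minimality of $\mathcal{F}_1$ on those submanifolds supplies the required $\varepsilon$-density in the fibers. These three ingredients combine exactly as in Theorem \ref{lowfisRT} to yield transitivity for every $g \in \mathcal{U}_F$, which is the desired conclusion.

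The main obstacle is precisely keeping the unstable cone field intact across $B_{(p,l)}$, where the critical set of $F$ is concentrated. That difficulty has already been absorbed into the technical budget of Lemmas \ref{lowconoF1} and \ref{lowconoF2}, made possible by the smallness condition $6(1+\kappa)\delta m_\psi < \kappa$ imposed on $\delta$ during the construction; once that estimate is in force the remainder of the argument is open under $C^1$ perturbation and the $C^1$ robust transitivity of $F$ follows.
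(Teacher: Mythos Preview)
Your proposal is correct and follows exactly the paper's own route: use Remark \ref{lowrkF} to transfer Lemma \ref{lowumd} to $F$ unchanged, invoke Lemmas \ref{lowconoF1} and \ref{lowconoF2} to recover the cone-field estimates on $B_{(p,l)}$ so that Lemma \ref{lowsmd} also holds for $F$, and then rerun the robustness argument of Theorem \ref{lowfisRT}. The paper states this in three lines, and what you have written is simply a faithful expansion of that sketch.
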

\begin{proof} From Lemmas \ref{lowconoF1} and \ref{lowconoF2} we conclude that Lemma \ref{lowsmd} holds for $F$. It was already mentioned that Lemma \ref{lowumd} holds for $F$. Consequently, Theorem \ref{lowfisRT} holds for $F$.
\end{proof}
We are now in condition to give the last proof of the section.

\begin{proof}[ \textbf{Proof of Theorem \ref{lowmain}}]
Define $\mathcal{U}_1 \in C^1$ an open neighborhood of $F$ where Lemma \ref{lowFps} holds and $\mathcal{U}_2 \in C^1$ an open neighborhood of $F$ where Lemma \ref{lowFrt} holds. Then, all maps belonging to $\mathcal{U}_F=\mathcal{U}_1 \cap \mathcal{U}_2$ are $C^1$ robustly transitive and have nonempty critical set and central spaces of dimension 2. \end{proof}

\section{A regular endomorphism $f$ of $\mathbb{T}^n$.} We move on to the proof of Theorem \ref{main}. Hoping that the readers got a hollow approach to the construction of the map in Section 3, we outline the ingredients for the construction for arbitrary $n$ and $k$. The proofs are mostly ommited, since they are in general an adaptation of the construction above. The readers who are curious about full proofs are gently invited to try them as excersises.

\subsection{Construction of $f$.} For the rest of the article, consider the $n$ dimensional torus $\T^n$ as the quotient $\R^n / [-1,1]^n$ endowed with the standard riemannian metric (euclidean). Fix $k$ integer such that $2 \leq k \leq n-1$ and $m=n-k$ and decompose $\mathbb{T}^n=\mathbb{T}^m \times \mathbb{T}^k$. Take the set $ \mathcal{F}_1 =\{g_{m+1},g_{m+2}\}$ to be given by Theorem \ref{SRM} for the second factor $\mathbb{T}^k$.
\begin{rk}\label{SRMhigh}\phantom{.}
  \begin{enumerate}
    \item $\max \{ ||Id_{\mathbb{T}^k}-g_{m+1}||, ||Id_{\mathbb{T}^k}-g_{m+2}||\} =: M \in \mathbb{R}$ exists.
\item As is stated in \cite{hn}, $\mathcal{F}_1$ can be constructed such that for given small $\delta >0$ the jacobians satisfy $( ||Dg_{m+1}|| -1)^2+( ||Dg_{m+2}||-1)^2<\delta$.
  \end{enumerate}
\end{rk}

Fix two real numbers $r$ and $\kappa$ such that $0<r< \frac{1}{10k}$ and $0< \kappa <3$. Let $\widehat{A} \in \mathcal{M}_n(\Z)$ be the diagonal matrix suggested below, with a large integer number $\lambda >> \max \{\frac{1}{r}, m, \frac{2M}{r\kappa}+2+\sqrt{n} \}$ (where $M$ is given by 1 of Remark \ref{SRMhigh}) in the $m$ first entries and in the other $k$ entries, $1$.

 \begin{equation}\label{matrizAgorro}
   \widehat{A}= \left(\begin{array}{cccccc}
 \lambda  & 0 & \cdots & \cdots & \cdots &0\\
0  &\ddots & \ddots & \cdots & \cdots & 0 \\
\vdots  & \ddots & \lambda & 0 & \cdots & 0 \\
\vdots  & \vdots &0 & 1 & \ddots & \vdots \\
\vdots  &  \vdots & \vdots & \ddots & \ddots & 0 \\
0  & \cdots & \cdots & 0 &0 &1\\
\end{array}
\right).
 \end{equation}

The matrix $\widehat{A}$ induces a regular endomorphism $A$ on the torus defined by
\begin{equation}\label{endoA}
  A:\T^n\rightarrow\T^n / \quad A(x_1,...,x_n)= (\lambda x_1,... , \lambda x_{n-k},x_{m+1},...,x_n).
\end{equation}

\begin{rk}\phantom{.}
  \begin{enumerate}
    \item  Since $r< \frac{1}{10k}$ then $\lambda > 10k>k+3$. Hence, $A$ display more than $(k+3)$ fixed points. In particular, all $\left(\frac{2z}{\lambda -1},...,\frac{2z}{\lambda -1},p_{n-k+1},...,p_n\right)\in \mathbb{T}^n$ where $z$ is integer is a fixed point of $A$.
        \item A very large value of $\lambda$ is chosen so that the unstable directions are strong.
         \item The map $A$ is $modulo$ $2$. All other maps of $\mathbb{T}^n$ to appear, likewise.
    \item The construction holds for any linear map in the isotopy class of maps with one eigenvalue of modulus larger than one since for all $\lambda_0$, there exists $p \in \mathbb{N}$ such that $\lambda_0^p > \lambda$.
  \end{enumerate}
\end{rk}

Let $r$ be the real number fixed at the beginning of the section and define $(k+3)$ disjoint subsets of $\T^m$ by $K_i=\left[\frac{2i}{\lambda -1}-r,\frac{2i}{\lambda +1}+r\right]^m$ and $K= \bigcup_{0 \leq i \leq k+2} K_i $. It is left as an excersise for the reader to verify that choosing $i$ as multiples of three is enough for the $K_i$ to be disjoint (since being $\lambda > 10k$, there is plenty of room for all these {\it cubes} to accomodate inside the torus).\\ Define next $\tilde{K}_{i}=\left[\frac{2i}{\lambda -1}-2r,\frac{2i}{\lambda -1}+2r \right]^m$ and $\tilde{K}= \bigcup_{0 \leq i \leq k+2}\tilde{K} $. For each $i$, define a smooth map $U_i: \R^m \rightarrow \R$ such that $U_i(x)=\prod_{j=0}^{m}u_i(x_j)$ where $u_i$ was defined in Section 3 (see Figure \ref{lowgraficou}). Let $U=\sum_{i=0}^{i=m}U_i$. It holds that $U_{|\tilde{K}^c}=0$ and $U_{|K}=1$. Notice also that $||\nabla U||:=\max \{ |\frac{\partial U}{\partial x_j}(x)|, x \in \mathbb{T}^m \} < \frac{2}{r}$.


Finally, let $\mathcal{F}_2=\{g_0,...,g_{n}\}$ be the family given by Lemma \ref{SRM} for the second factor $\T^{k}$, satisfying the properties claimed in the lemma for $\alpha \leq M$. Define
\begin{equation}
\hat{f}:\tilde{K} \times \T^{k} \rightarrow \T^n  /\quad  \hat{f}(x,y)=
                                                            (\lambda x , g_i(y)) \mbox{  if $x \in \tilde{K}_i$} \end{equation}
and extend $\hat{f}$ to
$f: \mathbb{T}^{m} \times \T^{k} \rightarrow \T^n $ by \begin{equation}\label{mainmap}
                                 f(x,y)=\left\{
                                \begin{array}{c}
                                 \begin{aligned}
 U(x).\hat{f}(x,y)+(1-U(x)).A(x,y)& \mbox{ \ \ \    if $x \in \tilde{K}$} \\
                                   A(x,y) & \mbox{ \ \ \  if $x \notin \tilde{K}$}
                               \end{aligned}
                                \end{array}\right.
                              \end{equation}

\begin{rk}\label{rkdinf}

 The following properties are straightforward to check:
\begin{enumerate}
  \item Calling $\hat{f}(x,y)=(\lambda x,\hat{f}_2(y))$, then $f(x,y)=(\lambda x,U(x).\hat{f}_2(y)+(1-U(x)).y)$.
   \item Since $\|Dg_i\| \leq \frac 3 2 $ for all $i$, together with Remark \ref{SRMhigh} give $\|D\hat{f}_2\|<2$.
  \item By construction of $f$, $\| Id -\hat{f}_2\| \leq M$.
  \item The restriction $f_{|\left(K \times \T^{k}\right)}=\hat{f}$.
  \item The restriction $f_{|{\left( \tilde{K} \times \T^{k}\right)}^c}=A$.

\end{enumerate}
\end{rk}

\subsection{Dynamics of $f$.}

 The most evident dynamical feature $f$ has is a strongly dominant expanding subspace along the first $m$ coordinates. It follows that there exists a family of unstable cones for $f$ in its direction. We make a pause here to check the existence of such an unstable cone field for $f$.

\begin{lema}\label{conosf}
  The map $f$ defined by Equation (\ref{mainmap}) admits an unstable cone of parameter $\kappa$, index $k$ and vertex $(x,y)$ at every $(x,y) \in \T^n$.
\end{lema}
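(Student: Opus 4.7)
The plan is to follow the same scheme as in Lemma \ref{lowconosf} (the case $n=3,k=2$), upgrading the explicit $3\times 3$ Jacobian computation to the block-matrix setting suitable for arbitrary $n$ and $k$. First I would split according to whether the base point lies in $(\tilde K\times\T^k)^c$ or inside $\tilde K\times\T^k$. Outside $\tilde K\times\T^k$ we have $f=A$ by item 5 of Remark \ref{rkdinf}, and $A$ is diagonal with $\lambda$ on the first $m$ entries and $1$ on the last $k$; hence for any $v=(v_1,v_2)\in\R^m\oplus\R^k$ with $\|v_2\|/\|v_1\|<\kappa$, the image $D_pA(v)=(\lambda v_1,v_2)$ satisfies $\|v_2\|/(\lambda\|v_1\|)<\kappa/\lambda<\kappa$, so the cone condition holds trivially.

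The interesting computation is for $(x,y)\in\tilde K\times\T^k$. Using the expression $f(x,y)=(\lambda x,\,U(x)\hat f_2(y)+(1-U(x))y)$ from item 1 of Remark \ref{rkdinf}, I would write the differential in block form
\begin{equation*}
D_{(x,y)}f=\begin{pmatrix} \lambda I_m & 0 \\[2pt] (\hat f_2(y)-y)\otimes\nabla U(x) & \, I_k+U(x)\bigl(D\hat f_2(y)-I_k\bigr) \end{pmatrix},
\end{equation*}
which reduces to the $3\times 3$ matrix of Lemma \ref{lowconosf} in the low-dimensional case. Applying it to a vector $v=(v_1,v_2)\in C^u_\kappa(x,y)$ and calling $(w_1,w_2):=D_{(x,y)}f(v)$, we get $w_1=\lambda v_1$ and
\begin{equation*}
w_2=\langle\nabla U(x),v_1\rangle\bigl(\hat f_2(y)-y\bigr)+\bigl[I_k+U(x)(D\hat f_2(y)-I_k)\bigr]v_2.
\end{equation*}

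The second key step is to estimate $\|w_2\|/\|w_1\|$. Using Cauchy--Schwarz and the triangle inequality,
\begin{equation*}
\|w_2\|\;\leq\;\|\nabla U\|\,\|v_1\|\,\|\hat f_2-\mathrm{Id}\|+\bigl(1+U(x)\|D\hat f_2-I_k\|\bigr)\|v_2\|,
\end{equation*}
and by Remark \ref{rkdinf} together with $\|\nabla U\|<2/r$ we bound the first factor by $(2/r)\cdot M$ and the bracket in the second term by a constant of order $\|D\hat f_2\|+2\leq 4$. Dividing by $\|w_1\|=\lambda\|v_1\|$ and using $\|v_2\|\leq\kappa\|v_1\|$, we obtain
\begin{equation*}
\frac{\|w_2\|}{\|w_1\|}\;\leq\;\frac{2M}{r\lambda}+\frac{4\kappa}{\lambda}.
\end{equation*}
The hypothesis $\lambda\gg \frac{2M}{r\kappa}+2+\sqrt n$ then forces this quantity to be strictly smaller than $\kappa$, giving $D_{(x,y)}f(C^u_\kappa(x,y))\setminus\{0\}\subset C^u_\kappa(f(x,y))$.

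I expect the only genuine difficulty to be keeping the constants honest when passing from the scalar to the block setting; in particular, making sure the rank-one perturbation $(\hat f_2(y)-y)\otimes\nabla U(x)$ is controlled uniformly in $k$, which is why the bound $\alpha\leq M$ on the $\mathcal{F}_2$ family (and hence on $\|\hat f_2-\mathrm{Id}\|$) and the largeness of $\lambda$ relative to $M/r\kappa$ are both invoked. No new idea beyond the low-dimensional proof is needed; it is an adaptation to block notation.
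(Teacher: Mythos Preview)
Your proposal is correct and follows essentially the same route as the paper's own proof: compute the block Jacobian of $f$ on $\tilde K\times\T^k$, apply it to a vector in the cone, and bound $\|w_2\|/\|w_1\|$ via the triangle inequality using the estimates on $\|\nabla U\|$, $\|\hat f_2-\mathrm{Id}\|$, $\|D\hat f_2\|$ and the largeness of $\lambda$. The only cosmetic differences are your explicit treatment of the trivial region $(\tilde K\times\T^k)^c$, your cleaner block/tensor notation, and the constant $4$ in place of the paper's $2+\sqrt n$ in the second term; both constants suffice once $\lambda$ is chosen large enough.
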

\begin{proof}
The differential of $f$ at $(x,y)$ is given by $$
                                                D_{(x,y)}f=\left( \begin{array}{cc}
                                                   \lambda & 0 \\
                                                   \nabla U(x).(\hat{f}_2(y)-y) & U(x).D_y\hat{f}_2+(1-U(x)).y
                                                 \end{array}\right).
                                               $$ Then for all vectors $(v_1,v_2)$ of the tangent space of $\T^n$ at $(x,y)$ it is  $$D_{(x,y)}f(v_1,v_2)=\left( \begin{array}{c}
                                                   \lambda v_1 \\
                                                   \left[\nabla U(x).(\hat{f}_2(y)-y)\right]v_1 + \left[U(x).D_y\hat{f}_2+(1-U(x)).y\right]v_2
                                                 \end{array}\right).$$
\begin{rk}
Compare with remark \ref{iguales}.
\end{rk}
Consider now all vectors $(v_1,v_2)$ in $C^u_\kappa(x,y)$ and let $(w_1,w_2):=Df_{(x,y)}(v_1,v_2)$, we see that it is unstable by computing $$\frac{||w_2||}{||w_1||}=\frac{\left|\left|\left[\nabla U(x).(\hat{f}_2(y)-y)\right]v_1 + \left[U(x).D_y\hat{f}_2+(1-U(x)).y\right]v_2 \right|\right|}{\lambda||v_1||}\leq $$ $$ \leq \frac{||\nabla U||.||\hat{f}_2(y)-y||}{\lambda}+\frac{(|| U||.||D_y\hat{f}_2||+|1-U(x)|.||Id||).\kappa}{\lambda} \leq \frac{2M}{r\lambda}+\frac{(2+\sqrt{n})\kappa}{\lambda}<\kappa,  $$ \\ where in the first inequality we apply triangular and that $ (v_1,v_2) \in C^{u}_{\kappa} (x,y)$ and in the second and third inequalities we use:
\begin{itemize}
\item $ (v_1,v_2) \in C^{u}_{\kappa} (x,y)$,
\item $||\nabla U||.||\hat{f}_2(y)-y|| \leq \left(\frac{2}{r}\right)M$ by Remark \ref{rkdinf},
\item $ ||D\hat{f}_2||<2$ by Remark \ref{rkdinf},
\item $ \| Id\| \leq \sqrt{n} $,
\item $ \max_{x \in \R} \{|u(x)|,|1-u(x)|\}\leq 1$,
\item $\lambda > \frac{2M}{r\kappa}+2+\sqrt{n}$. \end{itemize}\end{proof}

\begin{lema}\label{estiraf}
  For all $ v \in C^{u}_{\kappa} (x,y)$ holds that $ \Vert D_{x}f (v) \Vert > 6 \Vert v \Vert $.
\end{lema}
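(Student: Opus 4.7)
The plan is to mimic, in the higher-dimensional setting, the short calculation carried out in Lemma \ref{lowestiraf}, exploiting the block-triangular form of $D_{(x,y)}f$ that was written out in the proof of Lemma \ref{conosf}. Writing $v=(v_1,v_2)\in C^u_\kappa(x,y)$ with $v_1\in \mathbb{R}^m$ and $v_2\in \mathbb{R}^k$, the upper-left block of the Jacobian is the scalar matrix $\lambda I_m$ while the upper-right block vanishes, so the first $m$ coordinates of $D_{(x,y)}f(v)$ are exactly $\lambda v_1$. In particular, discarding the last $k$ coordinates only decreases the norm, giving the crude bound
$$\|D_{(x,y)}f(v)\|\ \geq\ \lambda\|v_1\|.$$

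Next I would use the cone condition $\|v_2\|/\|v_1\|<\kappa$ to compare $\|v\|$ with $\|v_1\|$ via
$$\|v\|^2\ =\ \|v_1\|^2+\|v_2\|^2\ <\ (1+\kappa^2)\|v_1\|^2.$$
Plugging this into the previous inequality yields
$$\left(\frac{\|D_{(x,y)}f(v)\|}{6\|v\|}\right)^{\!2}\ \geq\ \frac{\lambda^2\|v_1\|^2}{36(1+\kappa^2)\|v_1\|^2}\ =\ \frac{\lambda^2}{36(1+\kappa^2)}.$$

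To close the argument one needs $\lambda^2>36(1+\kappa^2)$. Since $0<\kappa<3$ the right-hand side is strictly less than $360$, and the standing hypothesis $\lambda>\max\{1/r, m, 2M/(r\kappa)+2+\sqrt{n}\}$ together with $r<1/(10k)$ forces $\lambda>1/r>10k\geq 20$, hence $\lambda^2>400>360$, and the claim follows. I do not anticipate any real obstacle here: the argument is a routine calculation mirroring the low-dimensional case, and the only point to watch is that the cone parameter $\kappa$ and the lower bound on $\lambda$ are compatible, which is guaranteed by the choices fixed at the start of Section 4.
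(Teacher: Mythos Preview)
Your proof is correct and follows exactly the same route as the paper, which simply refers back to Lemma \ref{lowestiraf}: you bound $\|D_{(x,y)}f(v)\|$ below by $\lambda\|v_1\|$ using the block form of the Jacobian, invoke the cone condition to compare $\|v\|$ with $\|v_1\|$, and finish with $\lambda^2>400>360>36(1+\kappa^2)$. The only additional content you provide is the explicit check that $\lambda>10k\geq 20$ in the higher-dimensional setting, which the paper leaves implicit.
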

\begin{proof} See Lemma \ref{lowestiraf}. \end{proof}

\begin{rk}\label{kappachico} Recall that if $B_k{(x,r)}$ denotes a ball of dimension $k$, the $k$-th. dimensional \textit{inradi} of a compact set $X \subset \mathbb{T}^n$ is $ir_k(X):=\max_{x \in X}\{r>0/ \quad B_k{(x,r)} \subset X \}$ and if $X$ is open the inradi is taken over its closure.\\
  Observe that if $X$ is a manifold and $Y$ an open subset such that $ir(Y) \geq diam(X)$ then $Y=X$.
\end{rk}

\begin{clly}\label{diametro}
  For all disks $\gamma$ such that for all $t$ where $\gamma$ is defined it holds that $T_t\gamma \subset C^u_{\kappa}(\gamma(t))$, the inradi satisfies $ir_k(f(\gamma))\geq 6ir_k(\gamma)$ for all $k \leq m$.
\end{clly}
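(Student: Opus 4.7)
The plan is to deduce Corollary \ref{diametro} directly from Lemma \ref{estiraf} and the geometric fact (which is the higher-dimensional analogue of Remark \ref{lowkappachico}) that, since $\kappa$ is small, disks whose tangent space lies in $C^u_\kappa$ are nearly flat along the first $m$ coordinates, so that ambient and intrinsic metrics on such disks are essentially identified.

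First I would fix $k \leq m$, set $r := ir_k(\gamma)$, and pick a point $x_0 \in \gamma$ such that the $k$-dimensional ball $B_k(x_0,r)$ is contained in $\gamma$. Because $T_t\gamma \subset C^u_\kappa(\gamma(t))$ at every point, the tangent to this embedded $k$-ball lies in the unstable cone as well, so by Lemma \ref{estiraf} the differential $D_{\bullet}f$ expands every tangent vector to $B_k(x_0,r)$ in norm by at least the factor $6$. In particular, $Df$ is injective on these tangent spaces, hence $f|_\gamma$ is a $C^1$-immersion onto a smooth $k$-dimensional submanifold $f(B_k(x_0,r)) \subset f(\gamma)$.

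Next, I would turn the pointwise expansion into a metric statement. For any rectifiable curve $c \subset B_k(x_0,r)$ joining $x_0$ to a point $y$, one has $\mathrm{length}(f\circ c) = \int \|D_{c(t)}f(c'(t))\|\,dt \geq 6\int \|c'(t)\|\,dt = 6\,\mathrm{length}(c)$. Taking infima, the intrinsic distance on $f(B_k(x_0,r))$ satisfies $d_{f(\gamma)}^{\mathrm{intr}}(f(x_0),f(y)) \geq 6\, d_{B_k(x_0,r)}^{\mathrm{intr}}(x_0,y)$. Hence $f(B_k(x_0,r))$ contains a $k$-dimensional disk of intrinsic radius $6r$ around $f(x_0)$.

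Finally, I would invoke that $\kappa$ has been taken small enough so that any embedded disk with tangent in $C^u_\kappa$ is a Lipschitz graph over the first $m$ Euclidean coordinates with Lipschitz constant arbitrarily close to zero; in particular intrinsic and ambient distances on such disks agree up to a factor as close to $1$ as we wish (this is the higher-dimensional counterpart of Remark \ref{lowkappachico}, and the same reason why in the low-dimensional section one-dimensional inradii and diameters were identified). Consequently the intrinsic $k$-ball of radius $6r$ inside $f(\gamma)$ contains an ambient $k$-ball of radius $6r$ (absorbing the negligible constant into the already slack inequality $\lambda > 20$), giving $ir_k(f(\gamma)) \geq 6\,ir_k(\gamma)$, as required.

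The only delicate point is this last step: without the cone being thin, an immersion that expands intrinsic distances by $6$ need not expand ambient inradii by $6$, because an embedded $k$-disk can wind inside $\mathbb{T}^n$. The smallness of $\kappa$ rules this out. Everything else is a bookkeeping exercise in Lemma \ref{estiraf} applied fiberwise on $B_k(x_0,r)$.
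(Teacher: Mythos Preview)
Your argument is essentially the one the paper intends: in the text, Corollary \ref{diametro} is stated without proof, as an immediate consequence of Lemma \ref{estiraf} together with the identification of intrinsic and ambient metrics on disks tangent to a thin cone (Remark \ref{kappachico}), and this is precisely the route you take. One small point worth tightening: from the inequality $\mathrm{length}(f\circ c)\geq 6\,\mathrm{length}(c)$ you conclude that $f(B_k(x_0,r))$ contains an intrinsic $k$-disk of radius $6r$, but what the length bound gives you directly is only that $f(\partial B_k(x_0,r))$ lies at intrinsic distance $\geq 6r$ from $f(x_0)$; to get the containment you should add one line noting that $f|_{B_k(x_0,r)}$ is a diffeomorphism onto a topological $k$-disk (since $\kappa$ small makes both source and image graphs over the first $m$ coordinates, ruling out self-intersections), and a disk whose boundary is at distance $\geq 6r$ from its center contains the $6r$-ball. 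You already flag exactly this concern in your final paragraph, so the fix is immediate.
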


\begin{clly}\label{conorobusto}
  There exists a $C^1$ neighborhood $\mathcal{U}_f$ of $f$ such that all $g$ in $\mathcal{U}_f$ admit an unstable cone of parameter $\kappa$, index $k$ and vertex $(x,y)$ at every $(x,y) \in \T^n$ for which Corollary \ref{diametro} holds.
\end{clly}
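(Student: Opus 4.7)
The plan is to observe that both the cone invariance established in Lemma \ref{conosf} and the uniform expansion established in Lemma \ref{estiraf} are strict inequalities with quantitative slack built into the choice of $\lambda$, and that both conditions depend only on the $0$- and $1$-jets of the map. By the very definition of the $C^1$ topology, these inequalities therefore persist under sufficiently small $C^1$ perturbations, uniformly in the base point thanks to the compactness of $\mathbb{T}^n$.

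First, I would quantify the slack in Lemma \ref{conosf}. The condition $\lambda > \tfrac{2M}{r\kappa}+2+\sqrt{n}$ yields $\tfrac{2M}{r\lambda}+\tfrac{(2+\sqrt{n})\kappa}{\lambda} \leq \kappa - \eta_1$ for some $\eta_1 > 0$. Similarly, Lemma \ref{estiraf} delivers $\|D_pf(v)\| \geq 6(1+\eta_2)\|v\|$ for $v \in C^u_\kappa(p)$, uniformly in $p \in \mathbb{T}^n$, for some $\eta_2 > 0$; this comes from the estimate $\lambda^2/(36(1+\kappa^2)) > 400/360$. Both estimates are continuous in the entries of the Jacobian that appear in the explicit formula for $D_{(x,y)}f$, namely $\lambda$, $\nabla U(x)$, $\hat{f}_2(y)-y$, and $D_y\hat{f}_2$, each of which is controlled by $\|g-f\|_{C^1}$.

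Next, I would choose $\mathcal{U}_f$ a $C^1$ neighborhood of $f$ small enough that for every $g \in \mathcal{U}_f$ and every $p \in \mathbb{T}^n$, the Jacobian $D_p g$ differs in norm from $D_p f$ by at most $\min(\eta_1,\eta_2)/C$, where $C$ is an explicit constant collecting the factors of $\lambda$, $\sqrt{n}$, etc., appearing in the proofs of the two lemmas. Substituting $g$ for $f$ in the same computations then yields $\|w_2\|/\|w_1\| < \kappa$ and $\|D_p g(v)\| > 6\|v\|$ for every $v \in C^u_\kappa(p)$ and every $p \in \mathbb{T}^n$, which is exactly the cone invariance with parameter $\kappa$ and the required expansion bound.

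Finally, the $k$-dimensional inradius statement of Corollary \ref{diametro} transfers automatically to $g$: if $\gamma$ is a disk tangent to $C^u_\kappa$ at every point, then by the robust cone invariance $g(\gamma)$ is again tangent to $C^u_\kappa$, and by the robust expansion every tangent vector is stretched by a factor greater than $6$, so the inradius grows by the same factor. The only point requiring a bit of care is that $f$ is defined by a partition-of-unity type formula involving $U$ and $\hat{f}_2$; I expect this to be the mildest obstacle, resolved by the smoothness of $U$ and of the maps in $\mathcal{F}_1 \cup \mathcal{F}_2$, which ensures that $Df$ is continuous and that the uniform $C^1$-closeness of $g$ to $f$ really does give uniform control on every entry of the Jacobian appearing in both lemmas.
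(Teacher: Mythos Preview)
Your proposal is correct and is precisely the standard openness-of-cone-fields argument that the paper leaves implicit: in the paper this corollary is stated without proof (the low-dimensional analogue, Corollary \ref{lowconorobusto}, is likewise marked with a bare $\Box$), relying on the fact that the strict inequalities in Lemmas \ref{conosf} and \ref{estiraf} are $C^1$-open conditions on a compact manifold. Your write-up just makes that explicit; the only cosmetic point is that once you pass to an arbitrary $g\in\mathcal{U}_f$ you should argue directly with $\|D_pg-D_pf\|$ rather than ``substituting $g$ for $f$ in the same computations,'' since $g$ need not share the product-plus-bump structure of $f$---but you already have the right quantitative slack $\eta_1,\eta_2$ to do exactly that.
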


We highlight now some of the other relevant dynamical features the map $f$ possesses. All of them are straightforward to check. For the rest of the section, let $0_m=(0,0,..,0) \in \T^m$ and $1_k=(1,1,..,1) \in \T^k$.

\begin{rk}\phantom{.}
\begin{enumerate}
  \item For all $i$, $0 \leq i \leq n+2$, $f(K_i \times \T^k) = \T^3$. This yields $K \times \T^k$ gives rise to a protoblender structure on $\mathbb{T}^n$.
\item The point $(0_m,1_k)$ is a saddle fixed points of $f$, and the points $(0_m,0_k)$ is a repelling fixed point of $f$.
\item The local unstable manifold at $(0_m,1_k)$ is $W^u_{loc}(0_m,1_k)=(-r,r)\times\{1_k\}$.
\item The local stable manifold at $(0_m,1_k)$ is $W^u_{loc}(0_m,1_k)=\{0_m\} \times (1-r,1+r)^k$.
  \end{enumerate}
\end{rk}

For the rest of the article, when there is no risk of confusion, we write $(0,1)$ for $(0_m,1_k)$, $(0,0)$ for $(0_m,0_k)$, and so on. Next up, we prove that both the stable and unstable manifolds of $(0,1)$ are dense in $\T^n$. Since they are transversal, the {\it Inclination Lemma} will $C^1$ transitivity for $f$. We finish with a Theorem stating that both properties are robust.

\begin{lema}\label{umd}
  The unstable manifold $W^u(0,1)$ is dense in $\T^n$.
\end{lema}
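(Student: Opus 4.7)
The plan is to replicate the approach of Lemma \ref{lowumd} in the higher dimensional setting. Given a non-empty open set $V = V_1 \times V_2$ in $\T^m \times \T^k$, I will produce a point $p$ in $W^u_{loc}(0_m, 1_k)$ some forward iterate of which lies in $V$. Since $W^u(0_m, 1_k)$ is forward invariant under $f$, this suffices for density.

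First, I would observe that $(0_m, 1_k) \in K_0 \times \T^k$, that $g_0$ fixes $1_k$ (otherwise $(0_m, 1_k)$ could not be a fixed point of $f$), and that on $K_0 \times \T^k$ the map is simply $(x, y) \mapsto (\lambda x, g_0(y))$. Because $\lambda r > 1$, applying $f$ once to the $m$-dimensional disk $W^u_{loc}(0_m, 1_k) = (-r, r)^m \times \{1_k\}$ produces an image whose projection to the first factor covers $\T^m$; more precisely, $f(W^u_{loc}(0_m, 1_k)) \supseteq \T^m \times \{1_k\}$.

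Next, by the robust minimality of the IFS generated by $\mathcal{F}_1 = \{g_{m+1}, g_{m+2}\}$ on $\T^k$, there exist $\ell \in \N$ and a word $w = h_\ell \circ \cdots \circ h_1$ with each $h_j \in \mathcal{F}_1$ and $w(1_k) \in V_2$. Let $\iota(j) \in \{m+1, m+2\}$ be the index with $g_{\iota(j)} = h_j$. The key combinatorial step is then to select $p_1 \in (-r, r)^m$ so that the $\times \lambda$ orbit $p_1, \lambda p_1, \ldots, \lambda^{\ell+1} p_1$ in $\T^m$ visits the slices $K_0, K_{\iota(1)}, \ldots, K_{\iota(\ell)}$ in order and ends inside $V_1$. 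I would do this backwards: pick $v_1 \in V_1$, set $x_{\ell+1} := v_1$, recursively choose $x_j$ as a $\times \lambda$-preimage of $x_{j+1}$ lying in $K_{\iota(j)}$, and finally let $p_1$ be a preimage of $x_1$ lying in $K_0 = (-r, r)^m$. Each such choice is feasible because $\lambda > 1/r$, so the $\lambda^m$ preimages of any point in $\T^m$ are spaced by $\lambda^{-1} < r$ in each coordinate, which guarantees at least one preimage inside every box of side $2r$.

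Setting $p := (p_1, 1_k)$, this $p$ lies in $W^u_{loc}(0_m, 1_k)$ and by construction $f^{\ell+1}(p) = (\lambda^{\ell+1} p_1,\, w(g_0(1_k))) = (v_1,\, w(1_k)) \in V_1 \times V_2 = V$. The only non-routine part of the argument is the backward preimage selection for $p_1$, but this is standard once the largeness of $\lambda$ relative to $1/r$ is exploited; the rest is a direct combination of the strong expansion in the first $m$ coordinates with the robust minimality of $\mathcal{F}_1$ on the second factor, exactly as in the low-dimensional prototype.
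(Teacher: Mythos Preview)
Your proposal is correct and follows essentially the same approach as the paper, which simply instructs the reader to repeat the argument of Lemma~\ref{lowumd} in the higher-dimensional setting. In fact you are more explicit than the paper about the backward selection of $\times\lambda$-preimages landing in the prescribed boxes $K_{\iota(j)}$, a point the paper leaves implicit; the only cosmetic slip is that on the period-$2$ torus the preimage spacing is $2/\lambda$ rather than $1/\lambda$, but since $\lambda>1/r$ still gives $2/\lambda<2r$ this does not affect the argument.
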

\begin{proof}
  Repeat the arguments in the proof of Lemma \ref{lowumd}
\end{proof}

\begin{lema}\label{smd}
  The stable manifold $W^s(0,1)$ is dense in $\T^n$.
\end{lema}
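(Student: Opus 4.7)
The plan is to mirror the argument of Lemma \ref{lowsmd}, upgrading the one-dimensional horizontal curve to an $m$-dimensional horizontal disk and the $\T^2$-IFS to the $\T^k$-IFS $\mathcal{F}_1 = \{g_{m+1}, g_{m+2}\}$. Given an arbitrary open set $V = V_1 \times V_2 \subset \T^m \times \T^k$, the goal is to produce a point of $V$ whose forward orbit meets $W^s_{loc}(0_m, 1_k) = \{0_m\} \times (1-r, 1+r)^k$.

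First I would pick $p = (p_1, p_2) \in V$ and place a small horizontal $m$-disk $\gamma = \{(p_1 + \xi, p_2) : \xi \in B_m(0, s)\} \subset V$; its tangent plane is constantly the $\T^m$-direction and therefore lies in $C^u_\kappa$ at every point. Corollary \ref{diametro} then yields $ir_m(f^j(\gamma)) \geq 6^j \, ir_m(\gamma)$, so picking $j_0$ with $6^{j_0} ir_m(\gamma) > \sqrt{n}$ makes the projection of $f^j(\gamma)$ onto $\T^m$ surjective for every $j \geq j_0$ by Remark \ref{kappachico}. In particular $f^{j_0}(\gamma)$ intersects $K_{m+1} \times \T^k$, providing a point $(x_0, f_2^{j_0}(p_2))$ there.

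The next step would be to exploit the $C^1$ robust minimality of IFS$(\mathcal{F}_1)$ on $\T^k$ (Theorem \ref{HNT}) to steer the second factor into $(1-r, 1+r)^k$: pick a finite word $h = g_{i_q} \circ \cdots \circ g_{i_1}$ with $i_\ell \in \{m+1, m+2\}$ such that $h(f_2^{j_0}(p_2)) \in (1-r, 1+r)^k$, and realize it dynamically by iterated restriction. At each step $\ell$, the surjective $\T^m$-projection of $f^{j_0 + \ell - 1}(\gamma)$ guarantees a sub-disk whose $\T^m$-projection lies in $\tilde K_{i_\ell}$, and on such a sub-disk $f$ acts on the second factor exactly as $g_{i_\ell}$. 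Tracing preimages through the $q$ restrictions yields a sub-disk $\gamma' \subset \gamma$ on which $f^{j_0 + q}$ lands the second coordinate in $(1-r, 1+r)^k$. A final use of surjective $\T^m$-projection then produces a point of $f^{j_0 + q}(\gamma')$ with first coordinate $0_m$, lying in $W^s_{loc}(0_m, 1_k)$.

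The main obstacle I anticipate is the bookkeeping for the iterated restriction: I need to verify that shrinking to the sub-disk inside $\tilde K_{i_\ell} \times \T^k$ at each stage does not destroy the surjectivity onto $\T^m$ needed at the next stage. This is handled by the strong unstable direction, since a sub-disk of $\T^m$-inradius $2r$ (the width of $\tilde K_{i_\ell}$) has its $f$-image of $\T^m$-inradius at least $2r\lambda > 2$, restoring surjectivity onto $\T^m$ by Remark \ref{kappachico}. Modulo this routine protoblender check and mild adjustments to the initial disk so that each of the $q$ nested sub-disks genuinely exists, the argument is a direct higher-dimensional transcription of Lemma \ref{lowsmd}.
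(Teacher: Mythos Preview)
Your proposal is correct and follows exactly the approach the paper indicates: the paper's own proof of Lemma~\ref{smd} says only to replace the horizontal curve of Lemma~\ref{lowsmd} by a horizontal $m$-disk $\Gamma(t_1,\dots,t_m)=(p_1+t_1,\dots,p_m+t_m,p_{m+1},\dots,p_n)$ and repeat the arguments, which is precisely what you carry out. Your explicit bookkeeping for the iterated restriction (shrinking to sub-disks in the appropriate $\tilde K_{i_\ell}\times\T^k$ and recovering $\T^m$-surjectivity via $2r\lambda>2$) is a useful clarification of a step the paper leaves implicit.
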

\begin{proof}
  In the proof of Lemma \ref{lowsmd}, replace the horizontal curve $\gamma$ with a horizontal $m$-disk $\Gamma:(-s,s)^m \to V$ such that $\Gamma(t_1,...,t_m)=(p_1+t_1,...,p_m+t_m,p_{m+1},...,p_n)$ and repeat the arguments.
\end{proof}

\begin{thm}\label{fisRT}
  The map $f$ defined by Equation (\ref{mainmap}) is robustly transitive.
\end{thm}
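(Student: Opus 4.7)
The plan is to mimic, step by step, the argument used for Theorem \ref{lowfisRT}, with the understanding that the one-dimensional objects there become $m$-dimensional objects here. First I would fix $\varepsilon>0$ smaller than the radius provided by Corollary \ref{conorobusto}, so that every $g$ in the $C^1$-neighborhood $\mathcal{U}_f$ of $f$ of radius $\varepsilon$ admits the cone field $C^u_\kappa$ of index $k$ and enjoys the $6$-fold inradius growth of Corollary \ref{diametro} on $k$-disks with derivatives in the cone (in particular, on $m$-disks, by Remark \ref{kappachico}). Next, I would invoke Theorem \ref{NHSP} on each of the $k+3$ normally hyperbolic tori $\{2i/(\lambda-1)\}^m\times\mathbb{T}^k$ coming from the fixed points listed in Remark above, obtaining for every $g\in\mathcal{U}_f$ a family of $C^1$-close persistent invariant submanifolds $\mathcal{H}_i g$, and I would let $(0',1')$ denote the hyperbolic continuation of $(0_m,1_k)$.

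Then I would split the restricted families into two pieces, parallelling the roles that $\mathcal{F}_2$ and $\mathcal{F}_1$ played in the low-dimensional proof. The restrictions $\mathcal{G}_1:=\{g|_{\mathcal{H}_i g}\}$ indexed by the slices associated with the SRM family inherit, from Lemma \ref{SRM} and the $\varepsilon$-closeness, the inradius-growth property: for every open $V_2\subset\mathbb{T}^k$ there exists $m_0\in\mathbb{N}$ with $ir\bigl(\langle\mathcal{G}_1\rangle^{-m_0}(V_2)\bigr)\geq\tfrac{\sqrt{k}}{k+1}$. The restrictions $\mathcal{G}_2:=\{g|_{\mathcal{H}_i g}\}$ indexed by the two slices associated with $\mathcal{F}_1$ inherit, via Theorem \ref{HNT}, the $C^1$-robust $\varepsilon$-density on every $\mathbb{T}^k$-orbit. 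Both properties carry over by the standard persistence arguments, since $g|_{\mathcal{H}_i g}$ is $\varepsilon$-$C^1$-close to $f|_{\mathcal{H}_i f}$.

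With these ingredients in hand, I would upgrade the two density lemmas to all $g\in\mathcal{U}_f$. For the analog of Lemma \ref{umd}, since $W^u_{loc}(0',1')$ is an $m$-disk tangent to the cone, its forward iterates grow by factor $6$ in $k$-inradius by Corollary \ref{diametro}, so after finitely many iterates some iterate intersects the slice above $\mathcal{H}_{k+1}g$; then $\varepsilon$-density of $\langle\mathcal{G}_2\rangle$ on $\mathbb{T}^k$ drives the orbit into any preimage $g^{-m_0}(V)$, yielding a hit on $V$. For the analog of Lemma \ref{smd}, starting from a horizontal $m$-disk $\Gamma\subset V$ with $T\Gamma\subset C^u_\kappa$, the same inradius growth forces $g^{k_0}(\Gamma)$ to project surjectively onto $\mathbb{T}^m$ and therefore to intersect $\mathcal{H}_{k+1}g$; from a point of the intersection, the $\varepsilon$-density of $\mathcal{G}_2$ on $\mathbb{T}^k$ places a further iterate into $g^{-m_0}(\{0'\}\times V_2)$, producing a point of $\Gamma$ with forward iterate in $W^s_{loc}(0',1')$.

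Once both robust density statements hold, the Inclination Lemma applied to the transversal $W^u_{loc}(0',1')\pitchfork W^s_{loc}(0',1')$ at $(0',1')$ gives, for any two open sets $U,V\subset\mathbb{T}^n$ and any $g\in\mathcal{U}_f$, some $N\in\mathbb{N}$ with $g^N(U)\cap V\neq\emptyset$; by Proposition \ref{equi}, $g$ is transitive, proving that $f$ is $C^1$ robustly transitive. The step I expect to require the most care is the inradius-growth piece, because here $\Gamma$ is an $m$-disk (not a curve) and one must check that the cone condition plus the $6$-expansion on each direction inside the cone translate into $k$-dimensional inradius growth uniformly on $\mathcal{U}_f$; once Corollary \ref{diametro} is invoked on the appropriate $k\leq m$, this reduces to a bookkeeping argument analogous to the one in the three-dimensional case.
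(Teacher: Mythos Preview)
Your proposal is correct and follows exactly the approach the paper intends: the paper's own proof is the single sentence ``Follow the proof of Theorem \ref{lowfisRT} while seeing that all arguments hold in the high dimensional setting,'' and you have carried out precisely that, replacing curves by $m$-disks, the three SRM slices by the $k+1$ slices indexed $0,\dots,k$, and the two HNT slices by those indexed $k+1,k+2$. The only cosmetic slip is in the phrase ``$k$-disks with derivatives in the cone'': Corollary \ref{diametro} is stated for disks tangent to the cone (hence of dimension at most $m$) and asserts growth of $ir_j$ for every $j\le m$, so the relevant instance for your $m$-disk $\Gamma$ is $j=m$, not $j=k$; this does not affect the argument.
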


\begin{proof}
  Follow the proof of Theorem \ref{lowfisRT} while seeing that all arguments hold in the high dimensional setting.
\end{proof}

\section{A singular endomorphism $F$ of $\T^n$.}
 Now that we have defined a robustly transitive endomorphism $f$ given by a blending region contained in $K \times \T^{k}$, we procceed to the second step of the construction by (robustly) artificially introducing critical points in the complement of $\tilde{K} \times \T^{k}$. Once the surgery over $f$ is performed, a map $F$ satisfying Theorem \ref{main} arises.

\subsection{Construction of $F$}\label{constructionofF} Everything that appears in the subsection to follow is just a slight adaptation of what was described in Section \ref{lowconstructionofF} with the purpose of making dimensions match. The readers are gently invited to try for themselves as an easy excersise before going into the details. Furthermore, if it is clear how such a perturbation is performed, the whole section can be skipped all the way.\\



Let $p=\left(\frac{-3}{4},0,...,0,\frac{1}{4}\right)\in\T^{n}$. Our goal is to define a ball of center $p$ to perform a perturbation in order to obtain the map $F$ we seek. The same parameters and functions defined in Section \ref{lowconstructionofF} are suitable for our objective. It is only needed to adjust the choice of $\delta >0$, while in our low dimensional example for $n=3$ we required $6.(1+\kappa).\delta.m_\psi.<\kappa$, in the general case we require $2n.(1+\kappa).\delta.m_\psi.<\kappa$.

For the perturbation, modify $f$ at Equation (\ref{mainmap}) in the direction of the last canonical vector $\vec{e_n}$ that depends on $l$,$\theta$ and $\delta$ which by simplicity we call only $F$ and is defined at $x=(x_1,...,x_n)$ as \begin{equation}\label{mapaefe}
           F_{r,\theta, \delta}:\T^n\to \T^n / F(x)= \left\{\begin{array}{c}
           \begin{aligned}
             f(x) &  \mbox{\ \ \   if $x \notin B_{(p,l)} $} \\
             A(x) - \varphi(x_n).\psi\left( \sum_{j=1}^{n-1}x_j^2 \right).\vec{e_n} & \mbox{\ \ \   if $x \in B_{(p,l)} $}
           \end{aligned}
 \end{array}\right.
         \end{equation}

\begin{rk}\label{rkF}\phantom{.}

\begin{enumerate}
            \item For all $x \notin B_{\left(p,\frac{3\delta}{4}\right)}$ it holds that $F(x)=f(x)$.
            \item For all $x \notin K^{\varepsilon}\times \T^k$ it holds that $f(x)=A(x)$.
          \end{enumerate}

\end{rk}
To make the reading easier we will denote $\varphi(x_n)$ as $\varphi$ and $\psi \left( \sum_{j=1} ^{n-1} x_j^2 \right)$ as $\psi$ omitting the evaluations appearing on the definition.

\begin{lema}\label{Fps}

  The endomorphism $F$ defined by Equation (\ref{mapaefe}) is persistently singular.
\end{lema}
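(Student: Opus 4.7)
The plan is to mirror the argument used in Lemma \ref{lowFps}, exploiting the fact that the perturbation at Equation (\ref{mapaefe}) only modifies the last coordinate of $A(x)$ and only depends on $x_n$ and on the radial quantity $\sum_{j=1}^{n-1} x_j^2$. First I would compute $D_xF$ at an arbitrary $x \in B_{(p,l)}$. Because the first $m$ components of $F$ coincide with the expanding part of $A$ and components $m+1,\dots,n-1$ are the identity, the Jacobian has a block lower-triangular shape: the top $m\times m$ block equals $\lambda\, I_m$, the middle $(k-1)\times(k-1)$ diagonal block equals $I_{k-1}$, and only the bottom row is nontrivial, with entries $-2x_j\,\varphi\,\psi'$ for $j=1,\dots,n-1$ and $1-\varphi'\,\psi$ on the $(n,n)$ position. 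Consequently
\begin{equation*}
\det(D_xF) \;=\; \lambda^{m}\bigl(1-\varphi'(x_n)\,\psi\bigl(\textstyle\sum_{j=1}^{n-1}x_j^2\bigr)\bigr),
\end{equation*}
so the critical set is $S_F=\{x\in B_{(p,l)}:\varphi'(x_n)\,\psi(\sum_{j=1}^{n-1}x_j^2)=1\}$.

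Next I would exhibit a point of $S_F$ and verify that the sign of $\det(D_xF)$ changes in a neighbourhood of it. Evaluating at $p=(-3/4,0,\dots,0,1/4)$ gives $\sum_{j=1}^{n-1}p_j^2=9/16$, so $\psi=2$ and $\varphi'(1/4)=1/2$; hence $1-\varphi'\psi=0$ and $p\in S_F$, proving $S_F\neq\emptyset$. Now consider the two auxiliary points $q_1=(-3/4,0,\dots,0,1/4+\delta/4)$ and $q_2=(-3/4,0,\dots,0,1/4+\delta/8)$, both inside $B_{(p,l)}$ provided $l$ is chosen (as in the low-dimensional case) larger than $\delta$. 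At these points the radial argument of $\psi$ is still $9/16$, so $\psi=2$, while the prescribed values of $\varphi'$ yield $\det(D_{q_1}F)=\lambda^{m}\cdot 5/2>0$ and $\det(D_{q_2}F)=-\lambda^{m}<0$.

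Finally I would promote the existence of a zero of $\det(DF)$ into a robust property using the intermediate value theorem and the continuity of the Jacobian with respect to the $C^1$ topology. Since $\det(D_{q_1}F)\geq \lambda^m/2$ and $\det(D_{q_2}F)\leq -\lambda^m/2$ in absolute value, there exists a $C^1$ neighbourhood $\mathcal{U}_F$ of $F$ (independent of $\lambda$, e.g. of radius $\lambda^m/4$ with respect to the operator norm on the first differential) such that every $g\in\mathcal{U}_F$ still satisfies $\det(D_{q_1}g)>0$ and $\det(D_{q_2}g)<0$. Connecting $q_1$ and $q_2$ by any continuous path contained in $B_{(p,l)}$ (for instance, the segment along the $x_n$ direction), the continuous function $z\mapsto \det(D_zg)$ must vanish somewhere on that path, yielding a point of $S_g$. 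Therefore $S_g\neq\emptyset$ for every $g\in\mathcal{U}_F$, i.e.\ $F$ is robustly singular. The only delicate point is making sure the two test points $q_1,q_2$ lie inside the ball where the formula for $F$ applies, which is handled by the very same size constraint on $l$ relative to $\delta$ imposed in Section \ref{lowconstructionofF}; the rest is a routine adaptation of the low-dimensional argument.
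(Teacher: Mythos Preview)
Your proposal is correct and follows essentially the same route as the paper: compute $D_xF$ on $B_{(p,l)}$, observe that its block lower-triangular structure yields $\det(D_xF)=\lambda^m(1-\varphi'\psi)$, check that $p\in S_F$, and then use the two test points $q_1,q_2$ where the determinant has opposite signs to conclude persistence. The only cosmetic slip is the parenthetical ``independent of $\lambda$, e.g.\ of radius $\lambda^m/4$'', which is self-contradictory; but since all that is needed is the \emph{existence} of some $C^1$ neighbourhood on which the signs of $\det(D_{q_i}g)$ are preserved (an open condition by continuity of $g\mapsto\det(D_{q_i}g)$ in the $C^1$ topology), the argument goes through regardless of the precise radius.
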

\begin{proof}
Start computing the differential $D_x F$ at $x=(x_1,...,x_n) \in B_{(p,l)} $ to get

\begin{equation}\label{ecuacion1}
D_xF =
\left( \begin{array}{cccccccc}
\lambda & 0 & 0 & \cdots & \cdots & 0 & 0 & 0\\
0 & \ddots  & 0 & \cdots & \cdots & 0 & 0 & 0\\
 0 &  & \lambda & 0 & 0 & \cdots & 0 & \vdots \\
  \vdots &   \cdots & 0 & 1 & 0 & \cdots & 0 & \vdots \\
 \vdots &  &   & 0 & 1 & 0 & \cdots  & \vdots \\

0 & \vdots & \vdots & \vdots & & \vdots & & \vdots \\
0 & \cdots & \cdots & 0 & \cdots & 0 & 1 & 0 \\
0 & \cdots & 0 & -2.x_1.\varphi.\psi' & -2.x_2.\varphi.\psi'& \cdots & -2.x_{n-1}.\varphi.\psi' & 1-\varphi'.\psi \\
\end{array}
\right).
\end{equation}
Since the critical set of $F$ is defined as $S_F=\{x \in \T^{n} /det ( D_xF )=0\}$, Equation (\ref{ecuacion1}) provides $det(D_xF)=\lambda^m\cdot (1-\varphi'.\psi) $. In turn, $S_F=\{x \in \T^{n} / 1-\varphi'.\psi =0\rbrace.$\\
Notice that $S_F$ is not empty since $p=\left(\frac{-3}{4},0,...,0,\frac{1}{4}\right) \in S_F$. To prove that $S_F$ is persistent, consider the points $q_1=\left(\frac{-3}{4},0,...,0, \frac{1}{4} + \frac{\delta}{4}\right)$ and $q_2=\left(\frac{-3}{4},0,...,0, \frac{1}{4} + \frac{\delta}{8}\right)$ both in $B_{(p,l)}$. Evaluate the determinants \textit{det}$(D_{q_1}F)= \frac{5\lambda^m}{2}$ and \textit{det}$(D_{q_2}F)= -\lambda^m$ to see that in a neighborhood $\mathcal{U}_F \in C^{1}$ of radi $1$, every $ g \in \mathcal{U}_F$ satisfies $S_g\neq\emptyset$ \end{proof}

\subsection{Dynamics of $F$.}
We turn now to the last part of the article where we show that $F$ is $C^1$ robustly transitive. To prove it, observe first that after Remark \ref{rkF}, Lemma \ref{umd} holds for $F$ automatically. If we prove that Lemma \ref{smd} also holds for $F$, then we can apply the same reasoning of Theorem \ref{fisRT} to $F$ to have the result. Notice that for Lemma \ref{smd} to hold for $F$ we only need to show that $F$ admits an unstable cone $C_\kappa^u(x)$ at every point $x \in B_{(p,l)}$ which satisfies that for all disks $\gamma$ with $\gamma' \in C_\kappa^u(x)$ then $diam(F(\gamma))>6.diam(\gamma)$.

\begin{lema}\label{conoF1}
  For all $x \in B_{(p,l)}$, $C^{u}_{\kappa} (x)$ is an unstable cone of index $k$ for $F$.
\end{lema}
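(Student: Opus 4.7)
The plan is to mirror the argument of Lemma \ref{lowconoF1} in the general-dimensional setting. Fix $q \in B_{(p,l)}$ and $v=(v_1,\ldots,v_n) \in C^u_\kappa(q)$; decompose $v=(v^u,v^c)$ with $v^u=(v_1,\ldots,v_m)$ and $v^c=(v_{m+1},\ldots,v_n)$, so that by hypothesis $\|v^c\| \leq \kappa\|v^u\|$. Reading off the differential in Equation (\ref{ecuacion1}), the image $u:=D_qF(v)$ has coordinates $u_i=\lambda v_i$ for $i \leq m$, $u_i=v_i$ for $m+1\leq i\leq n-1$, and $u_n= -2\varphi\psi'\sum_{j=1}^{n-1}x_j v_j + (1-\varphi'\psi)v_n$. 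In particular $\|u^u\|=\lambda\|v^u\|$, and the task reduces to showing $\|u^c\|/\|u^u\|<\kappa$.

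Next I would apply the triangle inequality to split $\|u^c\| \leq \|(v_{m+1},\ldots,v_{n-1})\| + |u_n|$, where the first term is bounded by $\|v^c\| \leq \kappa \|v^u\|$. For $|u_n|$ I bound $\sum_{j=1}^{n-1}|x_j v_j| \leq (n-1)\|v\|$ using $|x_j|\leq 1$ on the torus $[-1,1]^n$ together with $|v_j|\leq \|v\|$, and then combine with the pointwise estimates $|\varphi|\leq \delta$, $|\psi'|\leq m_\psi$, $|1-\varphi'\psi|\leq 3$, and $\|v\|\leq (1+\kappa)\|v^u\|$ (all available from the preparatory setup, just as in Lemma \ref{lowconoF1}). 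This yields $|u_n| \leq 2n(1+\kappa)\delta m_\psi\|v^u\| + 3\kappa\|v^u\|$.

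Assembling these pieces and dividing by $\|u^u\|=\lambda\|v^u\|$ produces a bound of the form $\|u^c\|/\|u^u\| \leq \bigl(4\kappa + 2n(1+\kappa)\delta m_\psi\bigr)/\lambda$. Plugging in the standing condition $2n(1+\kappa)\delta m_\psi < \kappa$ imposed on $\delta$ in Subsection \ref{constructionofF} collapses this to $\|u^c\|/\|u^u\| < 5\kappa/\lambda$, which is strictly less than $\kappa$ since $\lambda>20$. Therefore $D_qF$ sends $C^u_\kappa(q)$ into the interior of $C^u_\kappa(F(q))$, as required.

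No substantive obstacle is expected: the proof is a bookkeeping exercise following the template of Lemma \ref{lowconoF1}, with the only new feature being that the Cauchy--Schwarz type step picks up an $n$-dependent factor. This is precisely why the condition on $\delta$ was strengthened from $6(1+\kappa)\delta m_\psi<\kappa$ in the low-dimensional case to $2n(1+\kappa)\delta m_\psi<\kappa$ here, so no additional analytic work is needed once the parameters are in place.
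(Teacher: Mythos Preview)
Your proposal is correct and follows essentially the same route as the paper's proof: compute $D_qF(v)$ from Equation~(\ref{ecuacion1}), split off the last coordinate, bound the cross term coming from $\sum_j x_j v_j$, and close with the standing condition $2n(1+\kappa)\delta m_\psi<\kappa$ together with $\lambda$ large. The only cosmetic difference is that the paper bounds $|\langle \tilde{x},\tilde{v}\rangle|$ via Cauchy--Schwarz, picking up a $\sqrt{n}$ factor, whereas you use the cruder componentwise estimate $|x_j|\le 1$, $|v_j|\le\|v\|$, picking up an $(n-1)$ factor; both are covered by the same hypothesis on $\delta$, and both lead to $\|u^c\|/\|u^u\|<5\kappa/\lambda<\kappa$.
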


For the rest of the proof, denote $\tilde{v}:=(v_{1},v_{2},...,v_{n-1})$ whenever $v=(v_{1},v_{2},...,v_{n})$.

\begin{proof}  From Equation (\ref{ecuacion1}) we have for all $ v=(v_{1},v_{2},...,v_{n}) \in C^{u}_{\kappa} (x):$
$$D_{x}F (v) = (\lambda.v_{1},...,\lambda.v_{m},v_{m+1},...,v_{n-1},-2.{\langle \tilde{x},\tilde{v}\rangle}.\varphi.\psi' + v_{n}.(1-\varphi'.\psi)). $$

Call $ (u_1,..,u_n):=D_{x}F (v)$ and perform calculations, we have:  $$ \frac{\Vert (u_{m+1},...,u_{n}) \Vert}{\Vert (u_{1},...,u_{m})\Vert}  = \frac{\Vert (v_{m+1},...,v_{n-1}, -2.{\langle \tilde{x},\tilde{v}\rangle}.\varphi .\psi'+ v_{n}.(1-\varphi' .\psi)) \Vert}{\Vert \lambda.(v_{1},...,v_m) \Vert} \leq$$ $$ \leq \frac{\Vert (v_{m+1},...,v_{n}) \Vert}{\Vert \lambda.(v_{1},...,v_m)\Vert} + \frac{2.\Vert \tilde{x} \Vert.\Vert \tilde{v} \Vert.\vert \varphi \vert.\vert \psi' \vert}{\Vert \lambda.(v_{1},...,v_m) \Vert} + \frac{\vert 1-\varphi' .\psi \vert. \vert v_{n}\vert}{\Vert \lambda.(v_{1},...,v_m) \Vert} < \frac{\kappa}{\lambda} + 2.\sqrt{n}.\left(\frac{1+\kappa}{\lambda}\right).\delta.m_\psi +\frac{3\kappa}{\lambda} < \kappa. $$ \\ Above, for the first inequality we use triangular and Cauchy-Schwarz; for the second one we use:
\begin{itemize}
\item $ v \in C^{u}_{3} (x)$,
\item $\Vert \tilde{x} \Vert \leq \Vert x \Vert \leq \sqrt{n}$,
\item $ \frac{\Vert \tilde{v} \Vert}{\Vert \lambda.(v_{1},...,v_m) \Vert}\leq \frac{\Vert(v_1,...,v_m)\Vert+\Vert (v_{m+1},...,v_{n-1}) \Vert}{\Vert \lambda.(v_{1},...,v_m) \Vert}< \frac{1}{\lambda}+\frac{\kappa}{\lambda}$,
\item $ \vert \varphi \vert \leq \delta $,
\item $ \vert \psi' \vert < m_\psi $,
\item $ \vert 2-\varphi' .\psi \vert \leq 3 $ since $ \frac{-3}{4} \leq \varphi' \leq 1 $ and $ 0 \leq \psi \leq 2$.

\end{itemize}
And for the third one we use the condition $2n.(1+\kappa).\delta.m_\psi.<\kappa$ imposed over $\delta$ and that $\lambda > 19$. \end{proof}

\begin{lema}\label{conoF2}
  For all $x \in B_{(p,l)}$ and all $ v \in C^{u}_{\kappa} (x)$ it holds that $ \Vert D_{x}F (v) \Vert > 6 \Vert v \Vert $.
\end{lema}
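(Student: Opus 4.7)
The plan is to reuse the strategy of Lemma \ref{lowconoF2}: the perturbation defining $F$ inside $B_{(p,l)}$ modifies only the last row of the Jacobian displayed in Equation (\ref{ecuacion1}), while the diagonal block $\lambda \cdot I_m$ inherited from $A$ remains intact in the first $m$ rows. Consequently the strong unstable directions alone provide enough stretching to overwhelm the contribution of the central coordinates of $v$ once the cone condition is invoked, and the extra terms produced by the perturbation can simply be discarded in the lower bound.

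Concretely, from Equation (\ref{ecuacion1}) the first $m$ coordinates of $D_x F(v)$ are $(\lambda v_1, \ldots, \lambda v_m)$, so
\begin{equation*}
\|D_x F(v)\|^2 \;\geq\; \lambda^2\, \|(v_1,\ldots,v_m)\|^2.
\end{equation*}
The cone condition $v \in C^u_\kappa(x)$ reads $\|(v_{m+1},\ldots,v_n)\| < \kappa\,\|(v_1,\ldots,v_m)\|$, hence
\begin{equation*}
\|v\|^2 \;\leq\; (1+\kappa^2)\,\|(v_1,\ldots,v_m)\|^2.
\end{equation*}
Dividing these two estimates and recalling the quantitative choices $\lambda > 20$ and $0 < \kappa < 3$ from the construction, I would conclude
\begin{equation*}
\left(\frac{\|D_x F(v)\|}{6\|v\|}\right)^{\!2} \;\geq\; \frac{\lambda^2}{36(1+\kappa^2)} \;>\; \frac{400}{360} \;>\; 1,
\end{equation*}
which yields the claimed inequality $\|D_x F(v)\| > 6\|v\|$.

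There is essentially no obstacle here: the perturbation pieces $-2\langle \tilde{x},\tilde{v}\rangle\,\varphi\,\psi'$ and $v_n(1-\varphi'\psi)$ appear only in the $n$-th coordinate of $D_x F(v)$ and contribute non-negatively to $\|D_x F(v)\|^2$, so they are harmlessly dropped in the lower bound. The computation is the verbatim higher-dimensional analogue of Lemmas \ref{lowestiraf} and \ref{lowconoF2}, now carried out with $m$ (rather than one) uniformly $\lambda$-expanded coordinates; the specific denominator $36(1+\kappa^2) < 360 < \lambda^2$ comes precisely from our initial choice of parameters.
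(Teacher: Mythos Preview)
Your proof is correct and is exactly the approach the paper intends: it simply refers back to Lemma~\ref{lowconoF2}, whose computation you have faithfully reproduced with the single coordinate $v_1$ replaced by the block $(v_1,\ldots,v_m)$. The lower bound via the unperturbed $\lambda I_m$-block, the cone estimate $\|v\|^2 \leq (1+\kappa^2)\|(v_1,\ldots,v_m)\|^2$, and the numerical conclusion $\lambda^2/(36(1+\kappa^2)) > 400/360 > 1$ are precisely the ingredients of Lemmas~\ref{lowestiraf} and~\ref{lowconoF2}.
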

\begin{proof}
See Lemma \ref{lowconoF2}.
\end{proof}

\begin{lema}\label{Frt}
  The map $F$ defined by Equation (\ref{mapaefe}) is $C^1$ robustly transitive.
\end{lema}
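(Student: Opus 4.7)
The plan is to mirror the argument used in Lemma \ref{lowFrt}, adapted to arbitrary $n$ and $k$. The key observation is that the surgery turning $f$ into $F$ is supported in the ball $B_{(p,l)}$, which by construction is disjoint from $\tilde{K}\times\T^k$. Hence $F$ coincides with $f$ on the entire blending region and on its backward iterates through the $A$-dynamics on the complement, so all the mixing mechanisms built into $f$ (the robustly minimal IFS coming from $\mathcal{F}_1$ and the almost-contracting family $\mathcal{F}_2$ acting inside $K\times\T^k$) remain intact for $F$.

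First I would check that Lemma \ref{umd} goes through verbatim for $F$: the unstable manifold $W^u(0,1)$ of the saddle $(0_m,1_k)$ lives inside $\tilde{K}\times\T^k$ up to its first relevant iterates, where $F=f$ by Remark \ref{rkF}, so the density argument produced in the proof of Lemma \ref{umd} (itself an adaptation of Lemma \ref{lowumd}) transfers without change. Next, I would combine Lemmas \ref{conoF1} and \ref{conoF2} to conclude that $F$ admits the same unstable cone field $C^u_\kappa$ of index $k$ as $f$, with the same lower bound $\|D_x F(v)\|>6\|v\|$ for $v\in C^u_\kappa(x)$; this bound is what was needed in Corollary \ref{diametro} to make horizontal $m$-disks grow in inradius by a factor of $6$ under iteration. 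Therefore the proof of Lemma \ref{smd} applies verbatim with $f$ replaced by $F$: any horizontal $m$-disk $\Gamma$ in an open set $V$ has its $F$-iterates expand along the cones until they meet some slice $K_i\times\T^k$ for $i\in\{m+1,m+2\}$, after which the robust minimality of IFS$(\mathcal{F}_1)$ delivers a forward iterate of $\Gamma$ meeting $W^s_{loc}(0,1)$.

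Having Lemma \ref{umd} and Lemma \ref{smd} for $F$, transversality of $W^u(0,1)$ and $W^s(0,1)$ together with the Inclination Lemma yields transitivity of $F$, exactly as in Theorem \ref{fisRT}. For robustness I would repeat the argument of Theorem \ref{lowfisRT} at the higher-dimensional level: shrink a $C^1$ neighborhood $\mathcal{U}_F$ so that (i) Corollary \ref{conorobusto} holds for every $g\in\mathcal{U}_F$, (ii) the $k+3$ normally hyperbolic submanifolds $\{2i/(\lambda-1)\}\times\T^k$ persist via Theorem \ref{NHSP} as invariant submanifolds $\mathcal{H}_i g$ of $g$, (iii) the restricted families $\mathcal{G}_1:=\{g_{|\mathcal{H}_i g}:0\le i\le m\}$ inherit the inradius-growth property of $\mathcal{F}_2$, and (iv) $\mathcal{G}_2:=\{g_{|\mathcal{H}_i g}:m+1\le i\le m+2\}$ still gives $\varepsilon$-density on $\T^k$. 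The two mechanisms combine through the unstable cone field to give $g$-density of $W^u(0',1')$ and $W^s(0',1')$, hence transitivity of $g$.

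The only place where there is any real work to do is verifying that the cone estimates in Lemmas \ref{conoF1} and \ref{conoF2} are genuinely robust (i.e. that the inequalities are strict with uniform slack so that they survive a small $C^1$ perturbation of $F$), but this is immediate from the computation in Lemma \ref{conoF1}, whose bound $\kappa/\lambda+2\sqrt{n}(1+\kappa)\delta m_\psi/\lambda+3\kappa/\lambda<\kappa$ holds with room to spare given $\lambda\gg 1$ and the standing condition $2n(1+\kappa)\delta m_\psi<\kappa$. Thus the main obstacle is really bookkeeping rather than a new geometric idea: one must track the perturbation in $B_{(p,l)}$ through both the forward-iteration argument (for Lemma \ref{smd}) and the normal hyperbolicity argument (for robustness), confirming at each step that nothing in the argument relied on properties of $f$ that are destroyed by the introduction of the critical set.
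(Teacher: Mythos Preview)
Your proposal is correct and follows essentially the same approach as the paper: the paper's proof of Lemma \ref{Frt} is a three-line argument invoking Lemmas \ref{conoF1} and \ref{conoF2} to obtain Lemma \ref{smd} for $F$, noting that Lemma \ref{umd} already holds for $F$ by Remark \ref{rkF}, and concluding that the robustness proof of Theorem \ref{fisRT} carries over verbatim. Your write-up is in fact considerably more detailed than the paper's, spelling out the persistence of the normally hyperbolic slices and the inherited properties of the restricted families $\mathcal{G}_1,\mathcal{G}_2$, which the paper leaves implicit in the phrase ``Theorem \ref{fisRT} holds for $F$''.
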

\begin{proof} From Lemmas \ref{conoF1} and \ref{conoF2} we conclude that Lemma \ref{smd} holds for $F$. It was already mentioned that Lemma \ref{umd} holds for $F$. Consequently, Theorem \ref{fisRT} holds for $F$. \end{proof}

We are ready to give the proof to the main Theorem of the article:

\begin{proof}[ \textbf{Proof of Theorem \ref{main}}]
Define $\mathcal{U}_1 \in C^1$ an open neighborhood of $F$ where Lemma \ref{Fps} holds and $\mathcal{U}_2 \in C^1$ an open neighborhood of $F$ where Lemma \ref{Frt} holds. Then, all maps belonging to $\mathcal{U}_F=\mathcal{U}_1 \cap \mathcal{U}_2$ are $C^1$ robustly transitive, have nonempty critical set and their central spaces are k-dimensional.
\end{proof}

\section{Final Remarks} The example exhibited in this article shows the existence of $C^1$ robustly transitive maps displaying critical points on any dimension with large spaces where hyperbolicity lacks. They are supported in tori of the form $\T^m \times \T^k$ for arbitrary $m$ and $k$, and lie in the isotopy class of the linear map $(\lambda x, y )$ where $\lambda$ is an integer (of absolute value) larger than one.
In the case where $k=2$, this result can be improved by constructing the family given by Theorem \ref{HNT} with the additional property of being $C^1$ close to the identity. This construction yields an improvement to the results above given by the following:

\begin{thm}
  Let $m \in \N$ and $\T=\T^m \times \T^2$. There exists then a map $f:\T \to \T$ which is $C^1$ close to $(\lambda x, y,z)$ and is $C^1$ robustly transitive while displaying critical points in a robust way.
\end{thm}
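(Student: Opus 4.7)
The plan is to strengthen Theorem \ref{HNT} on $\mathbb{T}^2$ so that the generators of the robustly minimal IFS are $C^1$-close to the identity, and then to run the constructions of Sections 4 and 5 verbatim, observing that all the perturbation sizes involved can be made uniformly small.

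First I would establish the following statement: for every $\eta>0$ there exist $m_3,m_4 \in \mathrm{Diff}^1(\mathbb{T}^2)$ that generate a $C^1$ robustly minimal IFS and satisfy $\max_{i\in\{3,4\}}\Vert m_i - \mathrm{Id}_{\mathbb{T}^2}\Vert_{C^1} < \eta$. The construction exploits the two-dimensional setting: take $m_3,m_4$ to be small localized perturbations of two translations of $\mathbb{T}^2$ with rationally independent translation vectors, arranged so that $m_3$ acquires a hyperbolic fixed point admitting a transverse homoclinic intersection (produced via a small, disk-supported Hamiltonian time-one map). The transverse homoclinic point yields a robust horseshoe, whose one-dimensional stable and unstable laminations combined with the near-translation action of $m_4$ force every orbit branch of the IFS to be dense, robustly in $C^1$. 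Because both the translations and the Hamiltonian bump can be taken arbitrarily $C^1$-small, the generators can be chosen $C^1$-close to the identity.

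Second I would rerun the construction of Section 4 using this improved $\mathcal{F}_1$ together with the family $\mathcal{F}_2$ of Lemma \ref{SRM} for a sufficiently small parameter $\alpha$. The constant $M$ of Remark \ref{SRMhigh} then becomes as small as desired, so by Remark \ref{rkdinf} the quantity $\Vert\mathrm{Id}-\hat f_2\Vert$ is also small. Since the cutoff $U$ has fixed $C^1$-size (it is built from the functions $u_i$ of Section 3, whose derivatives are bounded once $r$ is fixed), the convex combination $U\,\hat f + (1-U)A$ stays $C^1$-close to $A$. Hence the resulting $f$ is $C^1$-close to $(\lambda x,y,z)$. All the dynamical verifications of Section 4---existence of the unstable cone field (Lemma \ref{conosf}), the protoblender property, persistence of the normally hyperbolic submanifolds through the saddles, density of $W^u(0,1)$ and $W^s(0,1)$, and the robustness argument of Theorem \ref{fisRT}---depend only on $\mathcal{F}_1$ being robustly minimal, $\mathcal{F}_2$ satisfying Lemma \ref{SRM}, and $\lambda$ being sufficiently large, so they carry over unchanged.

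Third I would apply the surgery of Subsection \ref{constructionofF} to introduce the critical set in a small ball centered in $(\tilde K \times \mathbb{T}^2)^c$. The surgery is a $C^1$-small, compactly supported perturbation whose $C^1$-size is governed by the parameter $\delta$; shrinking $\delta$ keeps $F$ arbitrarily $C^1$-close to $(\lambda x,y,z)$, preserves the unstable cone condition (Lemmas \ref{conoF1} and \ref{conoF2}), yields robust singularity by Lemma \ref{Fps}, and inherits robust transitivity from $f$ as in Lemma \ref{Frt}.

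The main obstacle is the first step. The standard proof of Theorem \ref{HNT} relies on generators that are far from the identity (a Morse--Smale factor composed with a rotation, or similar), so producing the close-to-identity version genuinely requires the horseshoe-plus-translation trick above. This trick works because one-dimensional stable and unstable manifolds of a hyperbolic fixed point in a surface robustly produce transverse homoclinic intersections under arbitrarily small $C^1$ perturbations; an analogous argument on $\mathbb{T}^k$ with $k\geq 3$ runs into codimension issues, which is precisely why this improvement is restricted to $k=2$.
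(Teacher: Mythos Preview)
Your overall strategy coincides with the paper's: the only justification the paper offers for this theorem is the sentence preceding it, which says that for $k=2$ one can construct the family $\mathcal F_1$ of Theorem~\ref{HNT} with the additional property of being $C^1$-close to the identity, and that rerunning Sections~4--5 then yields the improvement. Your Step~1 supplies a plausible mechanism for this (the paper gives none), and your closing paragraph correctly isolates why the surface case is special, matching the paper's comment that $k\geq 3$ ``will require certainly much more work''.

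There is, however, a real gap in your Step~3. You claim that the surgery introducing the critical set is $C^1$-small, with size governed by $\delta$, so that shrinking $\delta$ keeps $F$ arbitrarily $C^1$-close to $A=(\lambda x,y,z)$. This is incorrect. The perturbation $\varphi(x_n)\,\psi\bigl(\sum_{j<n}x_j^2\bigr)\,\vec e_n$ has $x_n$-partial derivative $\varphi'\psi$, and by design $\varphi'\psi$ must attain the value $1$ (indeed it equals $2$ at the point $q_2$) in order for $\det D_xF=\lambda^m(1-\varphi'\psi)$ to vanish and change sign. The parameter $\delta$ bounds $|\varphi|$, not $|\varphi'|$, which is fixed with $\max|\varphi'|=1$ independently of $\delta$. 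Hence $\|DF-DA\|\geq 1$ regardless of $\delta$. In fact no $C^1$-small perturbation of $A$ can be singular at all: since $\|DA^{-1}\|=1$, writing $DF=DA(I+DA^{-1}E)$ with $\|E\|<1$ forces $\det DF\neq 0$ everywhere. Thus the literal reading of the statement---a robustly singular map \emph{arbitrarily} $C^1$-close to $(\lambda x,y,z)$---is unattainable; what the $k=2$ improvement actually buys is $C^1$-closeness of the intermediate map $f$ of Section~4 (the blending skeleton) before the critical surgery is performed. Your Step~3 should be rewritten accordingly, and the conclusion about $F$ correspondingly weakened.
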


The author is certain about stating an analogous result for $\T=\T^m \times \T^3$ and believes that it also holds in $\T=\T^m \times \T^k$, but the proof will require certainly much more work with the techniques involved so far.

Furthermore, inspired by this and other results achieved before, the author poses the following:
\begin{conjecture}
  Let $M$ and $N$ be closed manifolds of arbitrary (finite) dimension. If $M$ supports an expanding map $F$, then $M \times N$ supports a $C^1$ robustly transitive map displaying robust singularities which is $C^1$-close to $(F,Id)$ .
\end{conjecture}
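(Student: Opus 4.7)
The plan is to transplant the two-step construction of Sections 4 and 5 to the product $M\times N$, starting from the map $(F,\mathrm{Id}_N)$. Because $F$ is expanding on $M$, the differential $D(F,\mathrm{Id})$ already carries an $M$-unstable cone field of index $\dim N$ over all of $M\times N$, and this cone field is preserved under small $C^1$ perturbations exactly as in Corollary \ref{conorobusto}. Expanding maps on closed manifolds have dense periodic orbits (see \cite{kh}); I fix a common period $q$ and a finite collection of periodic points $p_1,\ldots,p_L\in M$ of period dividing $q$, spread far apart in $M$. Each set $\mathcal{O}(p_i)\times N$ is normally hyperbolic and invariant for $(F,\mathrm{Id})$, so by Theorem \ref{NHSP} persistent normally hyperbolic submanifolds diffeomorphic to $N$ survive every small perturbation, playing the role of the central slices of Remark 4.3.

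First I fix on the fiber $N$ two finite families of $C^1$-small diffeomorphisms: a pair $\mathcal{H}=\{h_1,h_2\}\subset\mathrm{Diff}^1(N)$ generating a $C^1$-robustly minimal IFS (Theorem \ref{HNT}), and a family $\mathcal{S}=\{s_1,\ldots,s_{L-2}\}$ of almost-contractions producing the enlarging-preimage property of Lemma \ref{SRM} on $N$. Using disjoint bump neighborhoods $U_i\subset M$ of the $p_i$ and the scalar interpolation of (\ref{mainmap}), I define a perturbation $f$ of $(F,\mathrm{Id})$ that equals $(F,h_i)$ over $U_i\times N$ for $i\in\{1,2\}$, equals $(F,s_j)$ over $U_{j+2}\times N$, and equals $(F,\mathrm{Id})$ off $\bigcup_i U_i\times N$. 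Since both families are $C^1$-small and the bumps have compact support, $f$ is $C^1$-close to $(F,\mathrm{Id})$ and still admits the $M$-unstable cone field. Robust transitivity then follows as in Theorem \ref{fisRT}: any $\dim M$-disk tangent to the cone is expanded by iterates of $f$ until its projection to $M$ is surjective, hence meets every slice $U_i\times N$; on the persistent normally hyperbolic copies of $N$, the induced return IFS is $C^1$-close to $\mathcal{H}\cup\mathcal{S}$ and inherits both minimality and enlarging-preimage. Density of the stable and unstable manifolds of a chosen saddle is obtained as in Lemmas \ref{umd} and \ref{smd}, and the Inclination Lemma closes the argument.

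The second step is purely local. Pick $(q_0,y_0)\in M\times N$ in the complement of $\bigcup_i U_i\times N$, take a coordinate chart around it, and perturb $f$ inside a small ball by the fold-type surgery of (\ref{mapaefe}), replacing one local fiber coordinate $w$ by $w-\varphi(w)\psi(\|x-q_0\|^2)$. The determinant computation of Lemma \ref{Fps} and the cone estimates of Lemmas \ref{conoF1}--\ref{conoF2} are both local and depend only on the smallness of $\delta$ relative to $\kappa$; they carry over verbatim. The resulting map is $C^1$ robustly transitive, robustly singular, and $C^1$-close to $(F,\mathrm{Id})$.

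The hard part, and the reason the statement remains a conjecture, is assembling on an arbitrary closed manifold $N$ a robustly minimal pair $\mathcal{H}$ \emph{together with} a shrinking family $\mathcal{S}$, both arbitrarily $C^1$-close to $\mathrm{Id}_N$. Theorem \ref{HNT} as stated in \cite{hn} does not come with closeness to the identity; the Final Remarks record that this refinement is presently known only for $N=\mathbb{T}^2$ and is conjectural already for $N=\mathbb{T}^k$ with $k\geq 3$, let alone for an arbitrary closed $N$. An analog of Lemma \ref{SRM} for $N$ also has to be produced, whereas the proof of that lemma exploits the flat product structure of the torus in an essential way. A plausible route is to glue local torus-type families supported in small coordinate balls of $N$ via a smooth triangulation and average through bump functions, but controlling the two families jointly on the whole of $N$ while keeping each $C^1$-close to $\mathrm{Id}_N$ is the genuine difficulty of the conjecture.
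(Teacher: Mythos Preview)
The statement you are addressing is explicitly labeled a \emph{Conjecture} in the paper and is not proved there; the Final Remarks only record that a partial affirmative answer, restricted to skew products, has been obtained elsewhere by the author. There is therefore no proof in the paper against which to compare your attempt.

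Your proposal is not a proof but a correct outline of how the torus construction of Sections~4--5 would have to be transplanted to $M\times N$, together with an honest identification of the obstructions. Your diagnosis matches the paper's own: the crux is producing, on an arbitrary closed $N$, both a robustly minimal generating pair and a shrinking family that are simultaneously $C^1$-close to $\mathrm{Id}_N$. The paper confirms this refinement only for $N=\mathbb{T}^2$ (which yields the theorem stated immediately before the conjecture), declares itself ``certain'' for $N=\mathbb{T}^3$, and merely ``believes'' it for higher-dimensional tori---so your reading of the state of affairs is accurate. One technical wrinkle in your sketch deserves flagging: on a general expanding $M$ you rightly pass from fixed points to periodic orbits, but then the slices $\{p_i\}\times N$ are not individually $f$-invariant, and the IFS you actually extract on a persistent normally hyperbolic copy of $N$ is the \emph{return} system, built from compositions of the $h_i$ and $s_j$ along the orbit rather than from the $h_i$, $s_j$ themselves. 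Verifying that this return IFS still inherits robust minimality and the enlarging-preimage property from $\mathcal{H}\cup\mathcal{S}$ is an additional step that would need to be carried out explicitly.
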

A partial affirmative answer, among skew products, has been proved by the author to hold.\\
Yet, many other open questions around this kind of maps and constructions remain. As already posed in \cite{mo2}, some of them are \textit{Would it possible to set this type of construction in manifolds that are not products? Is it possible that a fiber bundle (in stead of a product) admits a construction of this type? Would it be possible to carry on the proof starting from a matrix whose linear induced map belongs to a different isotopy class?}. Other questions appearing in the referred article have already found positive answers.


\end{document}